\documentclass[reqno,11pt]{amsart}

\usepackage{amsmath,amssymb,amsthm,tikz,mathtools}
\usepackage{verbatim}
\usepackage{xcolor}
\usepackage[normalem]{ulem}
\usepackage{mathrsfs}
\usepackage{cancel}

\usepackage{hyperref}
\usepackage[nameinlink]{cleveref}

\DeclareMathOperator*{\ess}{ess\,}

\setcounter{tocdepth}{2}

\newtheorem{theorem}{Theorem}[section]
\newtheorem{example}{Example}[section]
\newtheorem{definition}{Definition}[section]
\newtheorem{lemma}{Lemma}[section]
\newtheorem{corollary}{Corollary}[section]
\newtheorem{proposition}{Proposition}[section]

\newtheorem{remark}{Remark}[section]

\newcommand\dist{\mathrm{dist}\,}

\newcommand{\R}{\mathbb R}

\newcommand{\sing}{\text{Sing}}
\newcommand{\reg}{\text{Reg}}

\numberwithin{equation}{section}

\newcommand{\intav}[1]{\mathchoice {\mathop{\vrule width 6pt height 3 pt depth  -2.5pt
\kern -8pt \intop}\nolimits_{\kern -6pt#1}} {\mathop{\vrule width
5pt height 3  pt depth -2.6pt \kern -6pt \intop}\nolimits_{#1}}
{\mathop{\vrule width 5pt height 3 pt depth -2.6pt \kern -6pt
\intop}\nolimits_{#1}} {\mathop{\vrule width 5pt height 3 pt depth
-2.6pt \kern -6pt \intop}\nolimits_{#1}}}

\title[On FBP shaped by varying singularities]{On free boundary problems shaped by\\ varying singularities}

\author[D.J. Ara\'ujo]{Dami\~ao J. Ara\'ujo}
\address{Department of Mathematics, Universidade Federal da Para\'iba, 58059-900, Jo\~ao Pessoa-PB, Brazil}{}
\email{araujo@mat.ufpb.br}

\author[A. Sobral]{Aelson Sobral}
\address{Applied Mathematics and Computational Sciences (AMCS), Compu\-ter, Electrical and Mathematical Sciences and Engineering Division (CEMSE), King Abdullah University of Science and Technology (KAUST), Thuwal, 23955 -6900, Kingdom of Saudi Arabia}{} 
\email{aelson.sobral@kaust.edu.sa} 

\author[E. V. Teixeira]{Eduardo V. Teixeira}
\address{Department of Mathematics, Oklahoma State University, 401 Mathematical
Sciences, Stillwater, OK 74078, USA}
\email{eduardo.teixeira@okstate.edu}

\author[J.M.~Urbano]{Jos\'{e} Miguel Urbano}
\address{Applied Mathematics and Computational Sciences (AMCS), Compu\-ter, Electrical and Mathematical Sciences and Engineering Division (CEMSE), King Abdullah University of Science and Technology (KAUST), Thuwal, 23955 -6900, Kingdom of Saudi Arabia and CMUC, Department of Mathematics, University of Coimbra, 3000-143 Coimbra, Portugal}{} 
\email{miguel.urbano@kaust.edu.sa}

\begin{document}

\subjclass[2020]{Primary 35R35. Secondary 35J75}

\keywords{Free boundary problems, varying singularities, regularity estimates}

\begin{abstract} 
We start the investigation of free boundary variational models featuring varying singularities. The theory depends strongly on the nature of the singular power $\gamma(x)$ and how it changes. Under a mild continuity assumption on $\gamma(x)$, we prove the optimal regularity of minimizers. Such estimates vary point-by-point, leading to a continuum of free boundary geometries. We also conduct an extensive analysis of the free boundary shaped by the singularities.  Utilizing a new monotonicity formula, we show that if the singular power $\gamma(x)$ varies in a $W^{1,n^{+}}$ fashion, then the free boundary is locally a $C^{1,\delta}$ surface, up to a negligible singular set of Hausdorff co-dimension at least $2$.
\end{abstract}  

\date{\today}

\maketitle

\tableofcontents

\section{Introduction} \label{sct intro}

We develop a variational framework for the analysis of free boundary problems that include a continuum of singularities. The mathematical setup leads to the minimization of an energy-functional of the type
\begin{equation}\label{general F intro}  
    \mathscr{E}(v, \mathcal{O}) = \int_{\mathcal{O}} {F}(Dv, v, x) \, dx,
\end{equation}
whose Lagrangian, $F(\vec{p}, v, x)$, is non-differentiable with respect to the $v$ argument, and the degree of singularity varies with respect to the spatial variable $x$. The singularity variation exerts an intricate influence on the free boundary's trace and shape in a notably unpredictable manner. This dynamic not only alters the geometric behaviour of the solution but also significantly impacts the regularity of the free boundary. As a consequence, the associated Euler-Lagrange equation gives rise to a rich new class of singular elliptic partial differential equations, which, in their own right, present an array of intriguing and independent mathematical challenges and interests.

Singular elliptic PDEs, particularly those involving free boundaries, find applications in a variety of fields, including thin film flows, image segmentation, shape optimization, and biological invasion models in ecology, to cite just a few. Mathematically, such models lead to the analysis of an elliptic PDE of the form
\begin{equation}\label{sing PDE intro}    
    \Delta u = \mathfrak{s}(x,u)\chi_{\{u>0 \}},  
\end{equation}
within a domain $\Omega \subset \mathbb{R}^n$. The defining characteristic of the PDE above lies in the singular term $\mathfrak{s}\colon \Omega \times (0,\infty) \to \mathbb{R}$, which becomes arbitrarily large near the zero level set of the solution, \textit{i.e.}, 
\begin{equation}\label{papoca mininu}
    \lim\limits_{v\to 0} \mathfrak{s}(x,v) = \infty.
\end{equation}
Fine regularity properties of solutions to \eqref{sing PDE intro}, along with geometric measure estimates and eventually the differentiability of their free boundaries, $\partial \{u> 0 \}$, are inherently intertwined with {\it quantitative} information concerning the blow-up rate outlined in \eqref{papoca mininu}. Heuristically, solutions of PDEs with a faster singular blow-up rate will exhibit reduced regularity along their free boundaries. Existing methods for treating these singular PDE models, in various forms, rely to some extent on the {\it uniformity} of the blow-up rate prescribed in \eqref{papoca mininu}. 

In this paper, we investigate a broader class of variational free boundary problems, extending our focus to encompass varying blow-up rates. That is, we are interested in PDE models involving singular terms with fluctuating asymptotic behavior, 
\begin{equation}\label{intro motivacao eq1}
    \Delta u  \sim u^{-p(x)},
\end{equation}
for some function $p\colon \Omega \to [0,1)$. As anticipated, the analysis will be variational, \textit{i.e.}, we will investigate local minimizers of a given non-differentiable functional, as described in \eqref{general F intro}, which exhibit a spectrum of varying exponents of non-differentiability. 

The investigation of the static case, \textit{i.e.}, of PDE models in the form of $\Delta u \sim u^{-p_0}$, where $0 < p_0 < 1$, has a rich historical lineage, tracing its roots to the classical Alt--Phillips problem, as documented in \cite{AP, P1, P2}. This elegant problem has served as a source of inspiration, sparking significant advancements in the domain of free boundary problems, as exemplified by works like \cite{AT, DSS, DKV, ES, ST, Teix, WY, Y}, to cite just a few. Remarkably, the Alt--Phillips model serves as a bridge connecting the classical obstacle problem, which pertains to the case $p_0 = 0$, and the cavitation problem, achieved as the limit when $p_0 \nearrow 1$. Each intermediary model exhibits its own unique geometry. That is, solutions present a precise geometric behavior at a free boundary point, viz. $u \sim \text{dist}^{\beta}(x, \partial \{u> 0\})$, for a critical, well-defined and uniform exponent $\beta(p_0)$. 

Mathematically, the variation of the singular exponent brings several new challenges, as the model prescribes multiple free boundary geometries. The main difficulty in analyzing free boundary problems with varying singularities relies on quantifying how the local free boundary geometry fluctuations affect the regularity of the solution $u$ as well as the behavior of its associated free boundary $\partial \{u>0\}$. In essence, the main quest in this paper is to understand how changes in the free boundary geometry directly influence its local behaviour.  

From the applied viewpoint, the model studied here reflects the heterogeneity of external factors that govern reaction rates within porous catalytic regions where the gas density $u(x)$ is distributed (see, \textit{e.g.}, \cite{Aris}). For instance, in \cite{BN}, the static model $\Delta u \sim u^{-p_0}$ is derived as a singular limit of nonlinear eigenvalue problems motivated by the Langmuir–Hinshelwood principle. Yet in realistic settings—such as heterogeneous catalytic surfaces, thin films on patterned substrates, or porous composite materials—the effective singularity strength is rarely uniform. Instead, it varies across space due to local fluctuations in adsorption, microstructure, or wettability. Multiscale analyses under these conditions show that such heterogeneities fundamentally alter the limiting behavior, and the appropriate macroscopic description is no longer a constant-exponent model but rather a singular PDE with spatially varying singularities, as in \eqref{intro motivacao eq1}.

In this inaugural paper, our focus is directed toward fine regularity properties of local minimizers of the energy-functional
\begin{equation}\label{intro fifi}
    J_{\delta(x)}^{\gamma(x)}(v) \coloneqq  \int \frac{1}{2} \left| Dv \right|^2 + \delta(x) (v^+)^{\gamma(x)} dx,
\end{equation}
where the functions $\gamma(x)$ and $\delta(x)$ possess specific properties that will be elaborated upon in due course. In connection with the theory of singular elliptic PDEs, minimizers of \eqref{intro fifi} are distributional solutions of 
$$\left\{
    \begin{array}{rllcl}
        \Delta u & = & \delta(x) \gamma (x) u^{\gamma(x)-1} & \mathrm{in} & \,\{u>0\}\\
     Du & = & 0 & \mathrm{on} & \partial \{u>0\},
    \end{array}
\right.
$$
with the free boundary condition being observed by local regularity estimates, to be shown in this paper. 

The paper is organized as follows. In Section \ref{prelim-sect}, we discuss the mathematical setup of the problem and the scaling feature of the energy-functional \eqref{fifi}. We also establish the existence of minimizers as well as local $C^{1,\alpha_\ast}$-regularity, for some $0<\alpha_\ast<1$, independent of the modulus of continuity of $\gamma(x)$. The final preliminary result in Section \ref{prelim-sect} concerns non-degeneracy estimates. In Section \ref{sct-grad}, we obtain gradient estimates near the free boundary, quantifying the magnitude of $Du(y)$ in terms of the pointwise value $u(y)$. We highlight that the results established in Sections \ref{prelim-sect} and \ref{sct-grad} are all independent of the continuity of $\gamma(x)$. However, when $\gamma(x)$ varies randomly, regularity estimates of $u$ and its non-degeneracy properties along the free boundary have different homogeneities, and thus no further regularity properties of the free boundary are expected to hold. We tackle this issue in Section \ref{sct beicodebode}, where under a very weak condition on the modulus of continuity of $\gamma(x)$, we establish sharp {\it pointwise} growth estimates of $u$. The estimates from Section \ref{sct beicodebode} imply that near a free boundary point $x_0 \in \partial \{u>0 \}$, the minimizer $u$ behaves {\it precisely} as $\sim d^{\frac{2}{2-\gamma(x_0)}}$, with universal estimates. Section \ref{sct Hausdorff} is devoted to Hausdorff estimates of the free boundary. In Section \ref{sct monotonicity}, we obtain a Weiss-type monotonicity formula which yields blow-up classification, and in Section \ref{sct FB reg}, we discuss the regularity of the free boundary $\partial \{u>0 \}$. 

We conclude this introduction by emphasizing that the complexities inherent in the dynamic singularities model extend far beyond the boundaries of the specific problem under consideration in this study. The challenges posed by the program put forward in this paper call for the development of new methods and tools. We are optimistic that the solutions crafted in this research can have a broader impact, proving invaluable in the analysis of a wide range of mathematical problems where similar intricacies and complexities manifest themselves.

\section{Preliminary results}\label{prelim-sect}

\subsection{Mathematical setup} 

We start by describing precisely the mathematical setup of our problem. We assume $\Omega \subset \mathbb{R}^n$ is a bounded smooth domain and $\delta, \gamma \colon \Omega \to \R_0^+$ are bounded measurable functions. 

For each subset $\mathcal{O} \subset \Omega$, we denote
\begin{equation}\label{home}
\gamma_\ast (\mathcal{O}) \coloneqq  \mathop{\ess \inf}_{y\in \mathcal{O}} \, \gamma (y) \ \quad \text{and} \quad \, \gamma^\ast (\mathcal{O}) \coloneqq  \mathop{\ess \sup}_{y \in \mathcal{O}} \, \gamma (y).  
\end{equation}
In the case of balls, we adopt the simplified notation
\begin{equation*}
    \gamma_\ast (x,r) \coloneqq \gamma_\ast (B_r(x)) \qquad \text{and} \qquad \gamma^\ast(x,r) \coloneqq \gamma^\ast (B_r(x)).
\end{equation*}

Throughout the whole paper, we shall assume
\begin{equation} \label{H1}
0<\gamma_\ast(\Omega)   \le  \gamma^\ast(\Omega)  \le 1.
\end{equation}

For a non-negative boundary datum $0 \leq \varphi \in H^1(\Omega) \cap L^\infty(\Omega)$, we consider the problem of minimizing the functional 
\begin{equation}\label{fifi}
	\mathcal{J}^\delta_\gamma(v, \Omega) \coloneqq  \int_{\Omega} \frac{1}{2} \left| Dv \right|^2 + \delta(x) (v^+)^{\gamma(x)} dx
\end{equation}
among competing functions 
$$
v \in {\mathcal A} \coloneqq  \left\{ v \in H^1(\Omega) \ :\ v-\varphi \in H_0^1(\Omega) \right\}. 
$$
We say $u \in \mathcal{A}$ is a minimizer of \eqref{fifi} if 
$$
\mathcal{J}^\delta_\gamma(u, \Omega) \leq \mathcal{J}^\delta_\gamma(v, \Omega), \quad \forall v \in \mathcal{A}. 
$$
Note that minimizers as above are, in particular, local minimizers in the sense that, for any open subset $\Omega^\prime \subset \Omega$,
$$
\mathcal{J}^\delta_\gamma(u, \Omega^\prime) \leq \mathcal{J}^\delta_\gamma(v, \Omega^\prime), \quad \forall v \in H^1(\Omega^\prime)\ :\ v-u \in H_0^1(\Omega^\prime).
$$

\subsection{Scaling}

Some of the arguments used recurrently in this paper rely on a scaling feature of the functional \eqref{fifi} that we detail in the sequel for future reference. Let $x_0 \in \Omega$ and consider two parameters $A,B \in (0,1]$. If $u\in H^1(\Omega)$ is a minimizer of $\mathcal{J}_\gamma^\delta(v,B_A(x_0))$, then
\begin{equation}\label{august}
w (x) \coloneqq  \frac{u(x_0 + Ax)}{B}, \quad x\in B_1
\end{equation}
is a minimizer of the functional
$${\mathcal{J}}^{\tilde{\delta}}_{\tilde{\gamma}}(v,B_1) \coloneqq  \int_{B_1} \frac{1}{2} \left| Dv \right|^2 + \tilde{\delta}(x) v^{\tilde{\gamma}(x)} dx,$$
with
$$
\tilde{\delta}(x) \coloneqq  B^{\gamma(x_0 + Ax)}\left(\frac{A}{B} \right)^2  \delta(x_0 + Ax)  \quad \mathrm{and} \quad \tilde{\gamma}(x) \coloneqq  \gamma(x_0 + Ax).
$$   
Indeed, by changing variables,
$$\int_{B_A(x_0)} \frac{1}{2} \left| Du (x)\right|^2 + \delta(x) u(x)^{\gamma(x)} dx  $$
\begin{eqnarray*}
 &= &  A^{n} \int_{B_1} \frac{1}{2} \left| Du(x_0 + Ax) \right|^2 + \delta(x_0 + Ax)u(x_0 + Ax)^{\gamma(x_0 + Ax)}\, dx\\
 &   = &  A^n \int_{B_1} \frac{1}{2} \left| \left(\frac{B}{A} \right) Dw (x) \right|^2 + \delta(x_0 + Ax) \left[ B w (x) \right]^{\gamma(x_0 + Ax)} dx \\
 &   = & A^{n-2}B^2 \int_{B_1} \frac{1}{2} \left| Dw (x) \right|^2 + \frac{A^2}{B^{ 2-\gamma(x_0+Ax)}}  \delta(x_0 + Ax) \left[w (x) \right]^{\gamma(x_0 + Ax)} dx \\
 &   = & A^{n-2}B^2 \int_{B_1} \frac{1}{2} \left| Dw (x) \right|^2 + \tilde{\delta}(x)\left[ w (x) \right]^{\tilde{\gamma}(x)} dx.
\end{eqnarray*}
Observe that since $0<B \leq 1$, $\tilde{\delta}$ satisfies  
$$
\|\tilde{\delta}\|_{L^\infty(B_1)} \leq B^{\gamma_\ast(x_0,A) - 2} A^2\| \delta \|_{L^\infty(B_A(x_0))}. 
$$
In particular, choosing $A=r$ and $B=r^\beta$, with $0<r\leq 1$ and 
$$
\beta = \frac{2}{2-\gamma_\ast(x_0,A)},
$$
we obtain $\|\tilde \delta\|_{L^\infty(B_1)} \leq \|\delta\|_{L^\infty(B_r(x_0))}$.

\subsection{Existence of minimizers}
We start by proving the existence of non-negative minimizers of the functional \eqref{fifi} and deriving global $L^\infty$-bounds. 

\begin{proposition} Under the conditions above, namely \eqref{H1}, there exists a minimizer $u \in {\mathcal A} $ of the energy-functional \eqref{fifi}. Furthermore, $u$ is non-negative in $\Omega$ and $\|u\|_{L^\infty(\Omega)} \le \|\varphi\|_{L^\infty(\Omega)}$.
\end{proposition}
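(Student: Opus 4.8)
The plan is to use the direct method in the calculus of variations. First I would let $\{v_k\} \subset \mathcal{A}$ be a minimizing sequence for $\mathcal{J}^\delta_\gamma(\cdot,\Omega)$, and observe that since $v^+ \mapsto (v^+)^{\gamma(x)}$ is non-negative, the energy controls $\|Dv_k\|_{L^2(\Omega)}$; combined with $v_k - \varphi \in H^1_0(\Omega)$ and the Poincaré inequality, this yields a uniform $H^1(\Omega)$ bound. Hence, up to a subsequence, $v_k \rightharpoonup u$ weakly in $H^1(\Omega)$, strongly in $L^2(\Omega)$, and pointwise a.e., with $u - \varphi \in H^1_0(\Omega)$ since $H^1_0$ is weakly closed, so $u \in \mathcal{A}$.

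Next I would pass to the limit in the energy. The Dirichlet term $\tfrac12\int_\Omega |Dv|^2$ is weakly lower semicontinuous by convexity. For the potential term, note that $0 \le \gamma(x) \le 1$ and the $v_k$ are bounded in $L^\infty$ after the truncation step below (or one argues directly): along the subsequence, $(v_k^+)^{\gamma(x)} \to (u^+)^{\gamma(x)}$ pointwise a.e., and these are dominated by a fixed $L^1$ function (e.g. $1 + v_k^+$, using the uniform $L^2$ and hence $L^1$ bound, or by $\|\varphi\|_{L^\infty}^{\gamma^\star}$ after truncation), so by dominated convergence $\int_\Omega \delta(x)(v_k^+)^{\gamma(x)} \to \int_\Omega \delta(x)(u^+)^{\gamma(x)}$. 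Therefore $\mathcal{J}^\delta_\gamma(u,\Omega) \le \liminf_k \mathcal{J}^\delta_\gamma(v_k,\Omega) = \inf_{\mathcal{A}} \mathcal{J}^\delta_\gamma$, so $u$ is a minimizer.

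For non-negativity, I would compare $u$ with $u^+ \in \mathcal{A}$ (which is admissible since $\varphi \ge 0$ implies $u^+ - \varphi \in H^1_0(\Omega)$): one has $|Du^+| \le |Du|$ a.e. with strict inequality on $\{u<0\}$ if that set has positive measure, while the potential terms agree since $(u^+)^{\gamma} = ((u^+)^+)^{\gamma}$; minimality then forces $|\{u<0\}| = 0$, i.e. $u \ge 0$. For the $L^\infty$ bound, set $M := \|\varphi\|_{L^\infty(\Omega)}$ and compare $u$ with $v := \min(u,M) \in \mathcal{A}$ (admissible because $\varphi \le M$): again $|Dv| \le |Du|$ with strict inequality on $\{u>M\}$ if it has positive measure, and since $t \mapsto t^{\gamma(x)}$ is non-decreasing on $[0,\infty)$ we have $(v^+)^{\gamma(x)} \le (u^+)^{\gamma(x)}$, so $\mathcal{J}^\delta_\gamma(v,\Omega) \le \mathcal{J}^\delta_\gamma(u,\Omega)$ with strict inequality unless $|\{u>M\}| = 0$; minimality gives $u \le M$ a.e.

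The only genuinely delicate point is the lower semicontinuity of the potential term, because $\gamma$ is merely bounded and measurable and $t \mapsto t^{\gamma(x)}$ is concave (not convex) in $t$, so weak $L^2$ convergence alone is not enough — one really needs the a.e. convergence from the strong $L^2$ (hence a.e. along a subsequence) convergence together with a domination/equi-integrability argument. Once the $L^\infty$ truncation observation is in place (the competitors may be assumed bounded by $\|\varphi\|_{L^\infty}$ from the start, as the truncation argument above only decreases the energy), domination is immediate and the rest is routine.
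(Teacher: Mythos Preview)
Your proposal is correct and follows essentially the same approach as the paper: direct method with a minimizing sequence, $H^1$ bound via Poincar\'e, weak lower semicontinuity of the Dirichlet term, dominated convergence for the potential term, and truncation comparisons against $u^+$ and $\min(u,M)$ for the sign and $L^\infty$ bounds. Your derivation of the gradient bound is in fact slightly cleaner than the paper's (you use non-negativity of the potential directly, whereas the paper carries an extra $\|u_k\|_{L^1}$ term through a Young-type absorption), and you correctly flag the only delicate point---domination for the concave potential term---and resolve it via prior truncation of the minimizing sequence.
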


\begin{proof}
Let 
$$m = \inf_{v\in {\mathcal A} } \mathcal{J}^\delta_\gamma(v, \Omega)$$
and choose a minimizing sequence $u_k \in {\mathcal A} $ such that, as $k\to \infty$,
$$\mathcal{J}^\delta_\gamma(u_k, \Omega) \longrightarrow m.$$
Then, for $k \gg 1$, we have
\begin{eqnarray*}
 \left\| Du_k \right\|_{L^2(\Omega)}^2 & = &  2\mathcal{J}^\delta_\gamma(u_k, \Omega) - 2 \int_{\Omega} \delta(x) (u_k^+)^{\gamma(x)} dx \\
 & \leq & 2(m+1).
\end{eqnarray*}
From Poincar\'e inequality, we also have
\begin{eqnarray}
\left\|u_k \right\|_{L^2(\Omega)}&  \leq & \left\| u_k - \varphi \right\|_{L^2(\Omega)} + \left\| \varphi \right\|_{L^2(\Omega)} \qquad \nonumber \\
& \leq & C \left\| Du_k - D\varphi \right\|_{L^2(\Omega)} + \left\| \varphi \right\|_{L^2(\Omega)} \qquad \nonumber \\
& \leq & C \left\| Du_k \right\|_{L^2(\Omega)}  +  C \left\|  D\varphi \right\|_{L^2(\Omega)} + \left\| \varphi \right\|_{L^2(\Omega)}, \qquad \nonumber
\end{eqnarray}
and so
\begin{equation}\label{ramp}
\left\|u_k \right\|_{L^2(\Omega)} \leq C\left(m + 1 + \|\varphi\|_{H^1(\Omega)}\right),    
\end{equation}
for some dimensional constant $C>0$, which implies $\{u_k\}_k$ is bounded in $H^1(\Omega)$. Consequently, for a subsequence (relabelled for convenience) and a function $u \in H^1 (\Omega)$, we have
$$u_k \longrightarrow u,$$
weakly in $H^1 (\Omega)$, strongly in $L^2 (\Omega)$ and pointwise for a.e. $x \in \Omega$. Using Mazur's theorem, it is standard to conclude that $u \in {\mathcal A} $.

The weak lower semi-continuity of the norm gives 
$$\int_{\Omega} \frac{1}{2} \left| Du \right|^2 dx \leq \liminf_{k\to \infty} \int_{\Omega} \frac{1}{2} \left| Du_k \right|^2 dx$$
and the pointwise convergence and Lebesgue's dominated convergence give
$$\int_{\Omega} \delta(x) (u_k^+)^{\gamma(x)} dx \longrightarrow \int_{\Omega} \delta(x) (u^+)^{\gamma(x)} dx.$$
We conclude that
$$\mathcal{J}^\delta_\gamma(u, \Omega)  \leq  \liminf_{k\to \infty} \mathcal{J}^\delta_\gamma(u_k, \Omega) = m,$$
and so $u$ is a minimizer.

We now turn to the bounds on the minimizer. That $u$ is non-negative for a non-negative boundary datum is trivial since $(u^+)^{+} = u^+$, and testing the functional against $u^+ \in \mathcal{A}$ immediately gives the result.
For the upper bound, test the functional with $v=\min \left\{u, \|\varphi\|_{L^\infty(\Omega)} \right\} \in  {\mathcal A} $ to get, by the minimality of $u$, 
\begin{eqnarray*}
0 \leq \int_{\Omega} \left| D (u-v) \right|^2 dx& = &  \int_{\Omega \cap \{ u> \| \varphi \|_{L^\infty(\Omega)} \} } \left| Du \right|^2 dx\\
& = & \int_{\Omega} \left| Du \right|^2 -  \left| Dv \right|^2 dx \\
& \leq & 2 \int_{\Omega}   \delta(x)  \left[ (v^+)^{\gamma(x)} -  (u^+)^{\gamma(x)} \right]dx\\
& \leq & 0.
\end{eqnarray*}
We conclude that $v=u$ in $\Omega$ and thus $\|u\|_{L^\infty(\Omega)} \le \|\varphi\|_{L^\infty(\Omega)}$.
\end{proof}

\begin{remark} If the boundary datum $\varphi$ changes sign, the existence theorem above still applies, but the minimizer is no longer non-negative. Uniqueness may, in general, fail, even in the case of $\gamma \equiv \gamma_0 <1$.
\end{remark}

\subsection{Local gradient regularity estimates}
Our first main regularity result yields local $C^{1,\alpha}-$regularity estimates for local minimizers of \eqref{fifi}, under no further assumption on $\gamma(x)$ other than \eqref{H1}.

\begin{theorem}\label{localregthm}
Let $u$ be a minimizer of the energy-functional \eqref{fifi} under Assumption \eqref{H1}. For each subdomain $\Omega' \Subset \Omega$, there exists a constant $C>0$, depending only on the bounds on $\delta$, $n$, $\gamma_\ast(\Omega')$, $\dist (\Omega',\partial \Omega)$ and $\|u\|_{\infty}$, such that
$$
\|u\|_{C^{1,\alpha}(\Omega')} \leq C,
$$
for $\alpha = \dfrac{\gamma_\ast(\Omega')}{2-\gamma_\ast(\Omega')}$.
\end{theorem}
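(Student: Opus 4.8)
The plan is to establish the result by a perturbative/freezing-the-exponent argument, comparing the minimizer locally with the harmonic replacement and using the available scaling structure. First I would reduce, by a standard covering argument, to proving an interior oscillation-decay estimate for $Du$ around an arbitrary point $x_0 \in \Omega'$, with constants depending only on the listed quantities. The key point is that on a small ball $B_r(x_0)$ the lower exponent $\gamma_\star(x_0,r)$ is close to $\gamma_\star(\Omega')$ and, more importantly, the nonlinearity $\delta(x)(v^+)^{\gamma(x)}$ is bounded in $L^\infty$ with a bound controlled via the scaling identity \eqref{august}: rescaling with $A=r$, $B=r^{\beta}$ and $\beta=\frac{2}{2-\gamma_\star(x_0,r)}$ keeps $\|\tilde\delta\|_{L^\infty(B_1)}\le\|\delta\|_{L^\infty}$, so the rescaled function $w$ is a minimizer of a functional of exactly the same type with uniformly bounded data.

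Next I would run a Caccioppoli-type energy comparison: testing minimality of $u$ against its own harmonic replacement $h$ in $B_r(x_0)$ (so $h-u\in H^1_0(B_r(x_0))$, $\Delta h = 0$), the Dirichlet energies compare up to the zeroth-order term, giving
\begin{equation*}
\int_{B_r(x_0)} |Du-Dh|^2 \dd x \;\le\; 2\int_{B_r(x_0)} \delta(x)\bigl[(h^+)^{\gamma(x)}-(u^+)^{\gamma(x)}\bigr]\dd x \;\le\; C\, r^{n},
\end{equation*}
using $0\le\gamma\le1$, $\|u\|_{L^\infty}$ and the maximum principle bound $\|h\|_{L^\infty(B_r(x_0))}\le\|u\|_{L^\infty}$. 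Since $h$ is harmonic, $Dh$ satisfies the classical interior estimates; combining the excess decay for $Dh$ with the $r^n$ control on $\int|Du-Dh|^2$ yields, for the normalized excess $\omega(r):=\fint_{B_r(x_0)}|Du-(Du)_{B_r(x_0)}|^2\dd x$, an iteration inequality of the form $\omega(\tau r)\le C\tau^2\omega(r)+Cr^{2\alpha}$ for a suitable scaling. A routine geometric-iteration lemma then produces the Campanato-type decay $\omega(r)\lesssim r^{2\alpha}$, which by the Campanato characterization of Hölder spaces gives $Du\in C^{0,\alpha}$ locally with the asserted $\alpha=\frac{\gamma_\star(\Omega')}{2-\gamma_\star(\Omega')}$; the power $\alpha$ is exactly what the scaling with $\beta=\frac{2}{2-\gamma_\star}$ forces the zeroth-order term to contribute.

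The main obstacle I anticipate is controlling the exponent term $\int_{B_r}\delta(x)[(h^+)^{\gamma(x)}-(u^+)^{\gamma(x)}]\dd x$ with the correct power of $r$: because $\gamma(x)$ oscillates and can be arbitrarily close to $0$ or $1$, one cannot use uniform Lipschitz control of $t\mapsto t^{\gamma(x)}$ near $t=0$. The resolution is precisely not to estimate the difference pointwise but to keep only the crude bound $|(h^+)^{\gamma(x)}-(u^+)^{\gamma(x)}|\le 2\max\{\|u\|_\infty,1\}^{\gamma^\star}$, pay the full measure factor $r^n$, and absorb the deficit in homogeneity into the choice of normalization $B=r^{\beta}$ in \eqref{august} — this is why the argument must be carried out on the rescaled minimizer $w$ rather than directly on $u$. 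A secondary technical point is checking that the harmonic replacement is admissible (it is, since it has the same boundary trace) and that all constants emerging from the iteration remain independent of the modulus of continuity of $\gamma$, which they do because only $\gamma_\star$, $\gamma^\star\le1$, $\|\delta\|_\infty$ and $\|u\|_\infty$ enter the estimates.
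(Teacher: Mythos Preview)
Your overall architecture (harmonic replacement plus Campanato iteration) is the right one, and it is essentially what the paper does, but the crucial quantitative step is misidentified and the proposed fix via rescaling does not close.

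The gap is here: from the crude pointwise bound $|(h^+)^{\gamma(x)}-(u^+)^{\gamma(x)}|\le C$ you only obtain
\[
\int_{B_r(x_0)}|Du-Dh|^2\,dx \;\le\; C\,r^{n},
\]
and feeding this into the Campanato iteration yields at best $\phi(r)\lesssim r^{n}$, i.e.\ $Du\in \mathrm{BMO}$, not $Du\in C^{0,\alpha}$. You try to recover the missing factor $r^{2\alpha}$ by passing to the rescaling $w=u(x_0+r\cdot)/r^{\beta}$ with $\beta=2/(2-\gamma_\star)$ and invoking the bound $\|\tilde\delta\|_\infty\le\|\delta\|_\infty$; but to apply the same crude $L^\infty$ estimate to the rescaled problem you would also need $\|w\|_{L^\infty(B_1)}$ uniformly bounded, i.e.\ $\sup_{B_r(x_0)}u\le C\,r^{\beta}$. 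That is precisely the (pointwise) $C^{1,\alpha}$ growth you are trying to prove, so the argument is circular. At a generic point of $\Omega'$ where $u\sim 1$, the rescaled $w$ blows up like $r^{-\beta}$ and no uniform constant comes out.

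What the paper actually does is sharpen the energy comparison, not the scaling: after normalizing to $\|u\|_\infty\le 1$, one uses the elementary inequality $h^{\gamma(x)}-u^{\gamma(x)}\le |u-h|^{\gamma(x)}\le |u-h|^{\gamma_\star}$ (valid since $|u-h|\le 1$), and then H\"older together with Sobolev on $B_R$ give
\[
\int_{B_R}|u-h|^{\gamma_\star}\,dx \;\le\; C\,R^{n\left(1-\frac{\gamma_\star}{2^*}\right)}\Big(\int_{B_R}|Du-Dh|^2\Big)^{\gamma_\star/2}.
\]
Absorbing the right-hand side yields directly
\[
\int_{B_R}|Du-Dh|^2\,dx \;\le\; C\,R^{\,n+2\frac{\gamma_\star}{2-\gamma_\star}} \;=\; C\,R^{\,n+2\alpha},
\]
which is exactly the extra power needed for the Campanato step. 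So the ``main obstacle'' you anticipated is real, but the resolution is the opposite of the one you chose: one \emph{does} exploit a pointwise sublinearity estimate on $t\mapsto t^{\gamma(x)}$ (namely $a^\gamma-b^\gamma\le|a-b|^\gamma$), rather than discarding it in favor of an $L^\infty$ bound and trying to repair the loss by rescaling.
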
  

For the proof of Theorem \ref{localregthm}, we will argue along the lines of \cite{GG, LQT}, but several adjustments are needed, and we will mainly comment on those. We start by noting that, without loss of generality, one can assume that the minimizer satisfies the bound
\begin{equation}\label{flatsea}
	\|u\|_{L^\infty(\Omega)} \leq 1.
\end{equation}
Indeed, $u$ minimizes \eqref{fifi} if, and only if, the auxiliary function 
$$
\overline{u}(x)\coloneqq \frac{u(x)}{M},
$$
minimizes the functional
\begin{equation}\nonumber
v \mapsto \int_{\Omega} \frac12 \left| Dv \right|^2 + \overline{\delta}(x) (v^+)^{\gamma(x)}  \, dx,
\end{equation}
where 
$$
	\overline{\delta}(x)\coloneqq M^{\gamma(x)-2}\delta(x).
$$
Taking $M=\max\{1, \|u\|_{L^\infty(\Omega)}\},$  places the new function $\overline{u}$ under condition \eqref{flatsea}; any regularity estimate proven for $\overline{u}$ automatically translates to $u$. From now on, we will always assume minimizers are normalized.

\medskip

Next, we gather some useful estimates, which can be found in \cite[Lemma 2.4 and Lemma 4.1, respectively]{LQT}. We adjust the statements of the lemmata to fit the setup treated here. Given a ball $B_R(x_0) \Subset \Omega$, we denote the harmonic replacement (or lifting) of $u$ in $B_R(x_0)$ by $h$, \textit{i.e.}, $h$ is the solution of the boundary value problem
$$
	\Delta h=0 \; \text{ in }\; B_R(x_0)  \quad \text{ and } \quad h-u \in H^1_0(B_R(x_0)).
$$ 
By the maximum principle, we have $h\geq 0$ and
\begin{equation}\label{harmonic}
\|h\|_{L^\infty(B_R(x_0))} \leq \|u\|_{L^{\infty}(B_R(x_0))}.
\end{equation}

\begin{lemma}\label{manaira}
Let $\psi \in H^1(B_R)$ and $h$ be the harmonic replacement of $\psi$ in $B_R$. There holds 
\begin{equation}\label{ICTP}
\int\limits_{B_R}|D\psi-Dh|^2 \, dx = \int\limits_{B_R}|D\psi|^2-|Dh|^2 \, dx. 
\end{equation}
\end{lemma}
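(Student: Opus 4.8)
The plan is to exploit the Dirichlet orthogonality between the harmonic replacement and its complement; in fact the asserted inequality holds with equality, and one may take $c=1$. First I would record that, by the very definition of harmonic replacement, $w := \psi - h \in H^1_0(B_R)$, and that the weak formulation of $\Delta h = 0$ in $B_R$, tested against $w$, gives
\[
\int_{B_R} Dh \cdot Dw \, dx = 0 .
\]
This is legitimate because $w$ can be approximated in the $H^1$-norm by functions in $C_c^\infty(B_R)$, for which the identity $\int Dh\cdot D\varphi = 0$ is the definition of $h$ being harmonic, and both sides pass to the limit.

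Next I would simply expand the Dirichlet energy of $\psi = h + w$ and use the orthogonality just obtained:
\[
\int_{B_R} |D\psi|^2 \, dx = \int_{B_R} |Dh|^2 \, dx + 2\int_{B_R} Dh \cdot Dw \, dx + \int_{B_R} |Dw|^2 \, dx = \int_{B_R} |Dh|^2 \, dx + \int_{B_R} |D\psi - Dh|^2 \, dx .
\]
Rearranging yields
\[
\int_{B_R} |D\psi - Dh|^2 \, dx = \int_{B_R} |D\psi|^2 - |Dh|^2 \, dx ,
\]
which is \eqref{ICTP} with $c=1$ (trivially dependent only on $n$, indeed independent of it).

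The only point that needs a line of justification is that $w=\psi-h$ is an admissible test function in the weak formulation of harmonicity, i.e. the density of $C_c^\infty(B_R)$ in $H^1_0(B_R)$; this is entirely standard, so there is no real obstacle. The statement is just the Pythagorean/energy-projection property of the harmonic replacement, and the $n$-dependence retained in the constant is vestigial — it would become genuinely relevant only if one replaced the Laplacian by a variable-coefficient or nonlinear operator, as in the reference \cite{LQT}, in which case the same computation produces a constant $c$ controlled by the ellipticity bounds rather than a sharp equality.
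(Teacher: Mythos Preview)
Your proof is correct: for the harmonic replacement of the Laplacian the Dirichlet orthogonality $\int_{B_R} Dh\cdot D(\psi-h)\,dx=0$ gives equality in \eqref{ICTP} with $c=1$, exactly as you wrote. The paper does not supply its own argument here but simply imports the lemma from \cite[Lemma~2.4]{LQT}, where it is stated and proved for more general (possibly degenerate, $p$-Laplacian type) energies; in that generality one only obtains an inequality with a structural constant, which accounts for the formulation with a dimension-dependent $c$ that you correctly identified as vestigial in the present linear setting.
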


\begin{lemma}\label{cariri}
Let $\psi \in H^1(B_R)$ and $h$ be the harmonic replacement of $\psi$ in $B_R$. Given $\beta \in (0,1)$, there exists $C$, depending only on $n$ and $\beta$, such that 
\begin{eqnarray*}
\int\limits_{B_r}|D\psi-(D\psi)_r|^2 \, dx & \leq & C\left( \frac{r}{R}\right)^{n+2\beta} \int\limits_{B_R}|D\psi-(D\psi)_R|^2 \, dx \\
& & + C\int\limits_{B_R}|D\psi-Dh|^2 \, dx,
\end{eqnarray*}
for each $0<r \leq R$, where 
\begin{equation*}
    (D\psi)_s \coloneqq \intav{B_s}D\psi\,dx.
\end{equation*}
\end{lemma}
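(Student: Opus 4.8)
The plan is to reduce the estimate to the classical (and sharper) Campanato decay for gradients of harmonic functions, combined with the elementary fact that the average $(D\psi)_\rho$ realizes the $L^2(B_\rho)$-distance of $D\psi$ to constant vectors. Since $h$ is harmonic in $B_R$, each component of $Dh$ is harmonic, so interior derivative estimates applied to $Dh-(Dh)_R$ give $\sup_{B_{R/2}}|D^2h|^2 \le C(n)\,R^{-(n+2)}\int_{B_R}|Dh-(Dh)_R|^2\,dx$. Expanding $Dh$ to first order on $B_\rho$ and integrating, this yields, for $0<\rho\le R/2$,
\[
\int_{B_\rho}|Dh-(Dh)_\rho|^2\,dx \le C(n)\Big(\frac{\rho}{R}\Big)^{n+2}\int_{B_R}|Dh-(Dh)_R|^2\,dx,
\]
while for $R/2<\rho\le R$ the same bound is immediate, because the left-hand side is at most $\int_{B_R}|Dh-(Dh)_R|^2\,dx$ whereas $(\rho/R)^{n+2}\ge 2^{-(n+2)}$. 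Thus this decay holds for every $0<\rho\le R$ with $C=C(n)$.

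Next comes the comparison step. By minimality of the average, tested against the constant $c=(Dh)_r$, together with $|a+b|^2\le 2|a|^2+2|b|^2$,
\[
\int_{B_r}|D\psi-(D\psi)_r|^2\,dx \le \int_{B_r}|D\psi-(Dh)_r|^2\,dx \le 2\int_{B_r}|Dh-(Dh)_r|^2\,dx + 2\int_{B_r}|D\psi-Dh|^2\,dx.
\]
To the first term on the right one applies the harmonic decay above; the second is bounded by $\int_{B_R}|D\psi-Dh|^2\,dx$ since $B_r\subset B_R$. Finally, the same minimality/triangle argument on the ball $B_R$ gives $\int_{B_R}|Dh-(Dh)_R|^2\,dx \le 2\int_{B_R}|D\psi-(D\psi)_R|^2\,dx + 2\int_{B_R}|D\psi-Dh|^2\,dx$. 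Substituting this and using that $r\le R$ together with $\beta<1$ force $(r/R)^{n+2}\le (r/R)^{n+2\beta}\le 1$, every spurious power is absorbed, and one arrives at
\[
\int_{B_r}|D\psi-(D\psi)_r|^2\,dx \le C\Big(\frac{r}{R}\Big)^{n+2\beta}\int_{B_R}|D\psi-(D\psi)_R|^2\,dx + C\int_{B_R}|D\psi-Dh|^2\,dx,
\]
with $C=C(n)$; in fact the constant may be taken independent of $\beta$.

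I do not expect any genuine obstacle here: this is a standard Campanato-type iteration lemma. The two points that deserve a little care are the validity of the harmonic decay for all radii $0<\rho\le R$ (and not just $\rho\le R/2$), and the observation that the hypothesis $\beta<1$ is used exactly once — to trade the sharp harmonic exponent $n+2$ for the weaker $n+2\beta$ demanded by the statement, a loss that is harmless and in fact keeps the constant uniform as $\beta\uparrow 1$.
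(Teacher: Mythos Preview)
Your argument is correct and is the standard route to this Campanato comparison estimate: harmonic decay for $Dh$ at the sharp exponent $n+2$, followed by the minimality-of-the-average trick to pass back and forth between $D\psi$ and $Dh$, and finally the trivial downgrade $(r/R)^{n+2}\le (r/R)^{n+2\beta}$. Your observation that the constant is in fact independent of $\beta$ is a mild strengthening of the stated lemma.

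As for comparison with the paper: there is nothing to compare, because the paper does not prove this lemma. It is quoted verbatim (with adjusted notation) from \cite[Lemma~4.1]{LQT}, where the argument is essentially the one you wrote. So your proposal matches the intended proof.
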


We are ready to prove the local regularity result.

\begin{proof}[Proof of Theorem \ref{localregthm}] We prove the result for the case of balls $B_R(x_0) \Subset \Omega$.
Without loss of generality, assume $x_0=0$ and denote $B_R\coloneqq B_R(0)$. 
Since $u$ is a local minimizer, by testing \eqref{fifi} against its harmonic replacement, we obtain the inequality
\begin{equation} \label{fermi}
\displaystyle\int\limits_{B_R}|Du|^2-|Dh|^2 \, dx  \leq  2 \displaystyle\int\limits_{B_R} \delta(x)\left( h(x)^{\gamma(x)}-u(x)^{\gamma(x)} \right) \, dx.
\end{equation}
Next, we observe that
$$
	h(x)^{\gamma(x)}-u(x)^{\gamma(x)} \le |u(x)-h(x)|^{\gamma(x)},
$$
which is a consequence of the fact that the function 
$$
	f(t) \coloneqq (t-1)^{\gamma(x)} - t^{\gamma(x)} -1
$$ 
is decreasing for $t>1$. Using \eqref{H1}, together with \eqref{flatsea} and \eqref{harmonic}, we get
\begin{equation}\label{sangiusto}
|u(x)-h(x)|^{\gamma(x)} \leq |u(x)-h(x)|^{\gamma_\ast (0,R)}, \quad \text{a.e. in } \; B_R.
\end{equation}
This readily leads to 
$$\displaystyle\int\limits_{B_R} \delta(x)\left( h(x)^{\gamma(x)}-u(x)^{\gamma(x)} \right) \, dx \leq \|\delta\|_{L^\infty(\Omega)} \int\limits_{B_R} |u(x)-h(x)|^{\gamma_\ast (0,R)}\, dx.$$	
In addition, by combining H\"older  and Sobolev inequalities, we obtain
\begin{eqnarray}
    \int\limits_{B_R} |u-h|^{\gamma_\ast (0,R)}\, dx & \leq & C |B_R|^{1-\frac{\gamma_\ast (0,R)}{2^\ast}}\left( \,\int\limits_{B_R} |u-h|^{2^\ast}\, dx \, \right)^{\frac{\gamma_\ast (0,R)}{2^\ast}} \nonumber \\
& \leq & C |B_R|^{1-\frac{\gamma_\ast (0,R)}{2^\ast}}\left( \displaystyle \int\limits_{B_R} |Du-Dh|^{2}\, dx \right)^{\frac{\gamma_\ast (0,R)}{2}} \label{trieste}
\end{eqnarray}
for $2^\ast=\dfrac{2n}{n-2}$. 

Therefore, using Lemma \ref{manaira}, together with \eqref{fermi}, \eqref{sangiusto} and \eqref{trieste}, we get
\begin{equation}\label{Cariri7}
\int\limits_{B_R}|Du-Dh|^2 \, dx \leq C |B_R|^{\frac{2(2^\ast-\gamma_\ast (0,R))}{2^\ast(2-\gamma_\ast (0,R))}}=CR^{n+2\frac{\gamma_\ast (0,R)}{2-\gamma_\ast (0,R)}}.
\end{equation}
Finally, by taking 
$$
\beta=\frac{\gamma_\ast (0,R)}{2-\gamma_\ast (0,R)} \in (0,1),
$$
in Lemma \ref{cariri}, we conclude 
$$\int\limits_{B_r}|Du-(Du)_r|^2 \, dx $$
$$\leq C\left( \frac{r}{R}\right)^{n+2\frac{\gamma_\ast (0,R)}{2-\gamma_\ast (0,R)}} \int\limits_{B_R}|Du-(Du)_R|^2 \, dx + CR^{n+2\frac{\gamma_\ast (0,R)}{2-\gamma_\ast (0,R)}},
$$
for each $0<r \leq R$. Campanato's embedding theorem completes the proof, see for instance \cite[Theorem 2.9]{EG} and \cite[Theorem 3.1 and Lemma 3.4]{QF}.
\end{proof}

Hereafter, in this paper, we assume $\Omega = B_1 \subset \mathbb{R}^n$ and, according to what was argued around \eqref{flatsea}, fix a normalized, non-negative minimizer, $0\le u \le 1$, of the energy-functional \eqref{fifi}. 

\begin{remark} It is worth noting that the proof of Theorem \ref{localregthm} does not rely on the non-negativity property of $u$. Therefore, the same conclusion applies to the two-phase problem, and the proof remains unchanged.

\end{remark}

\subsection{Non-degeneracy}

We now turn our attention to local non-degeneracy estimates. We will assume $\delta(x)$ is bounded below away from zero, namely that it satisfies the condition
\begin{equation}\label{bound delta}
	 \mathop{\ess \inf}_{x\in B_1} \delta(x) \eqqcolon \delta_0>0. 
\end{equation} 

\begin{theorem}\label{nondeg}
Assume \eqref{bound delta} is in force. For any $y \in \overline{\{u>0\}}\cap B_{1/2}$ and $0<r < 1/2$, we have
\begin{equation}\label{nondeg_est}
	\sup_{\partial B_r(y)} u \geq c \, r^{\frac{2}{2-\gamma^\ast(y,r)}},
\end{equation}
where $c>0$ depends only on $n$, $\delta_0$ and $\gamma_\ast (0,1)$.
\end{theorem}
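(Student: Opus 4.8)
The plan is to prove the non-degeneracy estimate by contradiction, playing the harmonic-replacement energy comparison against the scaling structure of the functional. Fix $y\in\overline{\{u>0\}}$; by continuity it suffices to treat $y\in\{u>0\}$ and pass to the limit. Suppose \eqref{nondeg_est} fails, so that for some small $r$ we have $\sup_{\partial B_r(y)}u\le \varepsilon\, r^{\beta}$ with $\beta=\tfrac{2}{2-\gamma^\star(y,r)}$ and $\varepsilon$ small to be chosen. I would first reduce to the normalized setting via the scaling \eqref{august}: set $A=r$, $B=r^{\beta_\star}$ with $\beta_\star=\tfrac{2}{2-\gamma_\star(y,r)}$ (the exponent that keeps $\|\tilde\delta\|_\infty\le\|\delta\|_\infty$), so that the rescaled minimizer $w$ on $B_1$ satisfies $\sup_{\partial B_1}w\le \varepsilon\, r^{\beta-\beta_\star}\le\varepsilon$, since $\beta\le\beta_\star$ makes the exponent nonnegative (here \eqref{H1} and $r\le 1$ are used). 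Note $\tilde\gamma$ still takes values in $(0,1]$ and $\tilde\delta\ge \delta_0 r^{\text{(nonneg power)}}$ — this last point needs care and is the source of the main obstacle, discussed below.

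The core estimate is the following comparison on $B_1$ for the rescaled minimizer $w$ with $0\le w$ and $m:=\sup_{\partial B_1}w$ small. Let $h$ be the harmonic replacement of $w$ in $B_1$; by the maximum principle $0\le h\le m$ on $B_1$. Testing minimality against $h$ gives, as in \eqref{fermi},
$$
\int_{B_1}|Dw|^2-|Dh|^2\,dx \le 2\int_{B_1}\tilde\delta(x)\bigl(h^{\tilde\gamma(x)}-w^{\tilde\gamma(x)}\bigr)\,dx \le 2\int_{B_1}\tilde\delta(x)\,h^{\tilde\gamma(x)}\,dx \le C\,m^{\tilde\gamma_\star},
$$
using $w\ge 0$, $h\le m\le 1$ and $\tilde\gamma\ge\tilde\gamma_\star$. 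On the other hand, by Lemma \ref{manaira}, $c\int_{B_1}|Dw-Dh|^2\le\int_{B_1}|Dw|^2-|Dh|^2$, so $\|Dw-Dh\|_{L^2(B_1)}^2\le C m^{\tilde\gamma_\star}$. Now I would run the opposite inequality: testing minimality against $w$ versus a competitor built from $h$ plus a bump supported where $w$ is small, together with the lower bound $\tilde\delta\ge\tilde\delta_0>0$, forces the energy contribution $\int\tilde\delta\,w^{\tilde\gamma}\,dx$ to be controlled; combined with a De Giorgi / oscillation-type iteration (or the standard Alt–Phillips-style argument) one deduces that on a fixed interior ball, say $B_{1/2}$, $\sup_{B_{1/2}}w\le C m$, whence by $C^{1,\alpha}$-interior estimates from Theorem \ref{localregthm}, $w$ is comparable to $m$ in a fixed neighborhood. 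The point is that smallness of $m$ propagates: iterating on dyadic balls yields $w\equiv 0$ in a neighborhood of the center, contradicting $y\in\{w>0\}$ (equivalently, contradicting $y\in\overline{\{u>0\}}$ after unscaling).

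The cleanest route, which I would actually write, replaces the iteration by a direct one-shot comparison: define $v:=\max\{h-\sigma(R^2-|x|^2),\,0\}$ type competitors or, more simply, use the subsolution test function $\min\{u,\,t\}$ and track the level $t\sim r^{\beta}$; the minimality inequality then reads $\int_{B_r(y)}|Du|^2 \lesssim \int_{B_r(y)}|D(\min\{u,t\})|^2 + \delta_0^{-1}$-controlled singular terms, and an explicit barrier $\Phi(x)=c_0(|x-y|-\rho)_+^{\beta}$ solving $\Delta\Phi\le\delta_0\gamma\Phi^{\gamma-1}$ in an annulus — built using that $\beta(\beta-1)+\beta(n-1)>0$ and $\gamma^\star(y,r)$ is the relevant exponent on $B_r(y)$ — gives $u\le\Phi$ on $\partial(\text{annulus})$ and hence $u(y')\le\Phi(y')$ for $y'$ near $y$, forcing $u(y)=0$, again a contradiction. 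I expect the main obstacle to be the \textbf{non-uniformity of the exponent}: the barrier's governing power is $\gamma^\star(y,r)$ while the scaling that normalizes $\tilde\delta$ uses $\gamma_\star(y,r)$, so one must check that all the $r$-powers that appear carry \emph{nonnegative} exponents (so they are harmless for $r\le1$), which is exactly why the statement records $r^{2/(2-\gamma^\star(y,r))}$ and why the constant is allowed to depend on $\gamma_\star(0,1)$ but not on the modulus of continuity of $\gamma$. Keeping careful track of which of $\gamma_\star,\gamma^\star$ appears where, and confirming each surviving power of $r$ is $\ge 0$ via \eqref{H1}, is the crux; everything else is the standard Alt–Phillips non-degeneracy machinery.
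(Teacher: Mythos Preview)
Your proposal does not close. The obstacle you flag --- that the scaling which keeps $\|\tilde\delta\|_\infty$ bounded does \emph{not} preserve a uniform lower bound $\tilde\delta\ge\tilde\delta_0>0$ --- is fatal to the contradiction route you outline: after rescaling, the singular term may vanish in the limit, so the rescaled minimizer can be harmonic (or nearly so), and smallness of $m=\sup_{\partial B_1}w$ no longer forces $w$ to vanish. Your energy inequality $\|Dw-Dh\|_{L^2}^2\le Cm^{\tilde\gamma_\star}$ only says $w$ is close to $h$ in energy; it does not by itself give $\sup_{B_{1/2}}w\le Cm$ with a constant allowing iteration, and the ``opposite inequality'' step invoking $\tilde\delta\ge\tilde\delta_0$ is precisely what you cannot use. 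The barrier you sketch at the end, $\Phi(x)=c_0(|x-y|-\rho)_+^\beta$, is headed in a more promising direction but is not worked out (and the inequality $\Delta\Phi\le\delta_0\gamma\Phi^{\gamma-1}$ you want is not the right one for a non-degeneracy barrier).

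The paper's argument bypasses all of this by a direct one-shot maximum-principle barrier in the \emph{original} variables, with no scaling and no contradiction. Set
\[
\varphi(x):=u(x)^{\,2-\gamma^\star(y,r)}-c\,|x-y|^2
\]
in $\{u>0\}\cap B_r(y)$. Using the Euler--Lagrange equation $\Delta u=\delta(x)\gamma(x)u^{\gamma(x)-1}$ there, one computes
\[
\Delta\varphi \ge (2-\gamma^\star(y,r))\,\delta(x)\gamma(x)\,u^{\gamma(x)-\gamma^\star(y,r)}-2nc \ge \delta_0\,\gamma_\star(0,1)-2nc,
\]
because $u\le 1$ and $\gamma(x)-\gamma^\star(y,r)\le 0$ force $u^{\gamma(x)-\gamma^\star(y,r)}\ge 1$. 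Choosing $c\le\min\{1,\delta_0\gamma_\star(0,1)/(2n)\}$ makes $\varphi$ subharmonic; since $\varphi(y)>0$ and $\varphi\le 0$ on $\partial\{u>0\}$, the maximum principle gives a point on $\partial B_r(y)$ where $u^{2-\gamma^\star(y,r)}>c\,r^2$, i.e.\ \eqref{nondeg_est}. The choice of exponent $2-\gamma^\star(y,r)$ is exactly what makes the variable-exponent term harmless --- this is the idea missing from your proposal.
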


\begin{proof}
With $y \in \{u>0\}$ and $0 < r < 1/2$ fixed, define the auxiliary function $\varphi$ by
$$
	\varphi(x)\coloneqq u(x)^{2-\gamma^\ast(y,r)}-c|x-y|^2,
$$
for a constant $c>0$ satisfying
$$
	0 < c \leq \min \left\{ 1, \frac{\delta_0 \gamma_\ast (0,1) }{2n} \right\}.
$$
Note that $\Delta \varphi \geq 0$ in the weak sense in $\{u>0\} \cap B_r(y)$. Indeed, for any $B \Subset \{u>0\} \cap B_r(y)$ and $v \in H^1_0(B)$, we use $u^{1-\gamma^{\ast}(y,r)} v \in H^1_0(B)$ as a test function for $\Delta u = \delta(x)\gamma(x)u^{\gamma(x)-1}$ in the weak sense to get that the quantity
\begin{eqnarray*}
	\mathcal{I} = \int_B D\varphi \cdot Dv\,dx
\end{eqnarray*}
satisfies
\begin{eqnarray*}
	\mathcal{I} & = & (2-\gamma^\ast(y,r))\int_B u^{1-\gamma^\ast(y,r)} Du \cdot Dv\,dx - 2c \int_B (x-y) \cdot Dv\, dx\\
	 &= & (2-\gamma^\ast(y,r))\int_B Du \cdot D\left(u^{1-\gamma^\ast(y,r)} v\right)\,dx + 2cn \int_B v\, dx\\
	 & &  -(2-\gamma^\ast(y,r))\int_B(1-\gamma^{\ast}(y,r)) u^{-\gamma^{\ast}(y,r)}|Du|^2 v\,dx
	 \\
	  &\leq & -(2-\gamma^\ast(y,r))\int_B \delta(x)\gamma(x) v \,dx\  + 2cn \int_B v\, dx\\
	  &\leq & \int_B \left(-\delta_0 \gamma_{\ast}(0,1) + 2cn\right) v\, dx \leq 0,
\end{eqnarray*}
where the last inequality follows from the choice of $c$. In addition, since $\varphi(y)>0$, by the Maximum Principle, 
$$
	 \partial\left(\{u>0\} \cap B_r(y)\right) \cap \{\varphi>0\} \not = \emptyset.
$$
Consequently, since ${\frac{1}{2-\gamma^\ast(y,r)}} \leq 1$
$$
    \sup_{\partial B_r(y)} u > c^{{\frac{1}{2-\gamma^\ast(y,r)}}} \, r^{{\frac{2}{2-\gamma^\ast(y,r)}}}\geq c \, r^{{\frac{2}{2-\gamma^\ast(y,r)}}},
$$
and the proof is complete for $y \in \{u>0\}$; the general case follows by continuity.
\end{proof}

\section{Gradient estimates near the free boundary} \label{sct-grad}

In this section, we study gradient oscillation estimates for minimizers of \eqref{fifi} in regions relatively close to the free boundary. We first show that pointwise flatness implies an $ L^\infty$-estimate.

\begin{lemma}\label{april.2} 
Let $u$ be a local minimizer of the energy-functional \eqref{fifi} in $B_1$. Assume that
$$
    \gamma_\ast (0,1)>0.
$$
There exists a constant $C>4$, depending only on $\gamma_\ast (0,1)$ and universal parameters, such that, if 
$$
    u(x) \le \frac{1}{C} \, r^{\frac{2}{2-\gamma_\ast(0,1)}}, 
$$
for $x\in B_{1/2}$ and $r \leq 1/4$, then
\begin{equation}\label{conclusion magic lemma}
    \sup_{B_r(x)} u \leq C r^{\frac{2}{2-\gamma_\ast(0,1)}}.
\end{equation}   
\end{lemma}

\begin{proof}
Fix $0<r \leq 1/4$ and consider $j_r \in \mathbb{N}$ such that
$$
    2^{-(j_r+1)} \leq r < 2^{-j_r}.
$$
For $j \in \{1, 2, \cdots, j_r \}$, define
\[
S_j(x,u) \coloneqq \sup_{B_{2^{-j}}(x)} u,
\qquad
a_j \coloneqq 2^{j \beta_\ast(0,1)} S_j(x,u),
\]
and for $j=j_r+1$,
\[
a_{j_r+1} \coloneqq r^{-\beta_\ast(0,1)} \sup_{B_r(x)} u,
\]
where
$$
    \beta_\ast(0,1) = \frac{2}{2-\gamma_\ast (0,1)}.
$$
Then, to obtain \eqref{conclusion magic lemma}, it is enough to prove that there exists a constant $C>4$ such that
\begin{equation}\label{recursive arg shaggy}
    a_{j+1} \leq \max\{C,a_j\}, \quad \forall j \in \{1, 2, \cdots, j_r\}.
\end{equation}
Indeed, if this is true, then by recurrence
$$
    a_j \leq \max\{C,a_1\}, \quad \forall j \in \{1, 2, \cdots, j_r+1\},
$$
and so
$$
    a_{j_r + 1} = r^{-\beta_\ast(0,1)}\sup_{B_r(x)}u \leq \max\{C,a_1\} = C,
$$
where the last equality follows from the fact that, since $u$ is normalized, $a_1 = 2^{\beta_\ast} \sup_{B_{1/2}(x)}u \leq 4$ and $C>4$. Let us now suppose, seeking a contradiction, that \eqref{recursive arg shaggy} fails. Then, for each integer $k>0$, there exist a minimizer $u_k$ of \eqref{fifi} in $B_1$, $x_k \in B_{1/2}$ and $ 0< r_k < 1/4$, such that 
$$
    u_k(x_k) \le \frac{1}{k} \, r_k^{\beta_\ast(0,1)},
$$
but
\begin{equation}\label{equiv inequality}
    a_{j_k + 1} > \max\{k,a_{j_k}\}, \qquad \text{for some} \ j_k \in \{1,2,\cdots, j_{r_k}\}.
\end{equation}
In the sequel, define
\[
\varphi_k(x) \coloneqq 
   \frac{u_k(x_k+2^{-j_k}x)}{S_{j_k+1}(x_k,u_k)},
   \qquad x\in B_1.
\]
This function satisfies
\begin{equation}\label{this better work}
    \sup_{B_1} \varphi_k \leq 4, \qquad \sup_{B_{1/2}} \varphi_k = 1, \qquad \text{and} \quad \varphi_k(0) = O(k^{-2}).
\end{equation}
Indeed, from \eqref{equiv inequality}, we obtain
\[
\sup_{B_1} \varphi_k 
   = \frac{S_{j_k}(x_k,u_k)}{S_{j_k + 1}(x_k,u_k)}
   < \frac{2^{(j_k+1)\beta_\ast(0,1)}}{2^{j_k\beta_\ast(0,1)}} \leq 4.
\]
From scaling, it directly follows that $\sup_{B_{1/2}} \varphi_k = 1$, and finally,
$$
    \varphi_k(0) \leq \frac{1}{k^2}\,\frac{r_k^{\beta_\ast(0,1)}}{2^{(j_k+1)\beta_\ast(0,1)}} \leq \frac{1}{k^2}.
$$
In addition, note that $\varphi_k$ minimizes
$$
    v \longmapsto \int_{B_1} \frac{1}{2} \left| Dv \right|^2 + \delta_k(x) v^{\gamma_k(x)}\, dx,
$$
for
$$
    \delta_k(x)\coloneqq  \delta(x_k+2^{-j_k}x)\frac{2^{-2j_k}}{s_k^{2-\gamma(x_k+2^{-j_k}x)}} \quad \text{and} \quad \gamma_k(x)\coloneqq \gamma(x_k+2^{-j_k}x),
$$ 
where
$$
    s_k \coloneqq S_{j_k + 1}(x_k,u_k).
$$
From \eqref{equiv inequality}, we obtain
\begin{eqnarray*}
s_k^{\gamma(x_k+2^{-j_k}x)-2}2^{-2j_k} & \leq & 4 \,s_k^{\gamma(x_k+2^{-j_k}x)-2} \left( \frac{s_k}{k}\right)^{2-\gamma_\ast(0,1)} \\
& = & 4\,s_k^{\gamma(x_k+2^{-j_k}x)-\gamma_\ast(0,1)} \left( \frac{1}{k}\right)^{2-\gamma_\ast(0,1)}\\
& \leq & \frac{4}{k},
\end{eqnarray*}
for each $x \in B_1$. The last estimate is guaranteed since, for each $k$, 
$$
    \gamma_\ast(0,1) \coloneqq \inf_{y \in B_{1}(0)}\gamma(y) \leq \gamma(x_k+2^{-j_k}x).
$$
Hence,
\begin{equation}\nonumber
    \|\delta_k\|_{L^\infty(B_1)} \leq 4\,\|\delta\|_{L^\infty(B_1)} k^{-1}.
\end{equation}

Next, we apply Theorem \ref{localregthm} for the lower bound 
$$
    \inf_{y \in B_1}\gamma_k(y) = \inf_{y \in B_1} \gamma(x_k+2^{-j_k}y) = \inf_{x \in B_{2^{-j_k}}(x_k)}\gamma(x) = \gamma_\ast(x_k,2^{-j_k}) \geq \gamma_\ast(0,1),
$$
and observe that the sequence $\{\varphi_k\}_k$ is  $C^{1,\frac{\gamma_\ast(0,1)}{2-\gamma_\ast(0,1)}}-$equicontinuous, locally in $B_1$. Therefore, up to a subsequence, $\varphi_k$ converges uniformly to $\varphi_\infty$ locally in $B_1$, as $k\to \infty$.
Taking into account the estimates above, we conclude that $\varphi_\infty$ minimizes the functional 
$$
    v \longmapsto \int_{B_1} \frac{1}{2} \left| Dv \right|^2 \, dx.
$$ 
The proof of this fact follows the same lines as in \cite[Lemma 2.3 and Remark 1]{PT24}. The limit function \(\varphi_\infty\) is harmonic in \(B_1\), with \(\varphi_\infty(0)=0\) but \(\sup_{B_{1/2}} \varphi_\infty = 1\), by \eqref{this better work}. This contradicts the strong maximum principle. 
\end{proof}

Next, we prove a pointwise gradient estimate.

\begin{lemma}\label{pt grad estimate} 
Let $u$ be a local minimizer of the energy-functional \eqref{fifi} in $B_1$. Assume 
$$
    \gamma_\ast (0,1)>0.
$$
There exists a small universal parameter $\tau>0$ and a constant $\overline{C}$, depending only on $\gamma_\ast (0,1)$ and universal parameters, such that if
\begin{equation}\label{1608}
0 \leq u \leq \tau \quad \text{in } \; B_1,
\end{equation}
then
\begin{equation}\label{gradest}
    |Du(x)|^2 \le \overline{C}\, [u(x)]^{\gamma_\ast(0,1)},
\end{equation}
for each $x \in B_{1/2}$.
\end{lemma}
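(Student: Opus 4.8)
The plan is to prove the pointwise gradient bound \eqref{gradest} by a contradiction-compactness argument, exploiting the scaling structure of the functional together with the already-established $C^{1,\alpha}$-regularity (Theorem \ref{localregthm}) and the flatness-to-$L^\infty$ estimate (Lemma \ref{april}). Suppose the estimate fails: then there exist local minimizers $u_k$ in $B_1$ with $0\le u_k\le \tau_k\to 0$ and points $x_k\in B_{1/2}$ with $|Du_k(x_k)|^2 > k\,[u_k(x_k)]^{\gamma_\star(0,1)}$. The natural move is to introduce the intrinsic scale $r_k$ attached to the value $t_k:=u_k(x_k)$, i.e. choose $r_k$ so that $t_k \simeq r_k^{2/(2-\gamma_k)}$ where $\gamma_k:=\gamma_\star(x_k,r_k)$; equivalently, set $r_k := t_k^{(2-\gamma_k)/2}$ (one must check this is a consistent choice, using lower semicontinuity of $\gamma$ and the smallness of $t_k$, so that $r_k\to 0$). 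Then rescale
$$
\varphi_k(x) := \frac{u_k(x_k+r_kx)}{t_k}, \qquad x\in B_1 ,
$$
which by the scaling computation around \eqref{august} is a minimizer of a functional $\mathcal{J}^{\delta_k}_{\gamma_k(\cdot)}$ with $\gamma_k(x)=\gamma(x_k+r_kx)\to$ (subsequentially) some constant or limit profile, and with $\|\delta_k\|_{L^\infty(B_1)}$ controlled. By construction $\varphi_k(0)=1$, and the failure hypothesis becomes $|D\varphi_k(0)|^2 > k\, r_k^{-2} t_k^{\,\gamma_\star(0,1)-1}\cdot(\text{something})$; the point is that after rescaling $|D\varphi_k(0)| = (r_k/t_k)|Du_k(x_k)| = r_k^{-1}t_k^{-1+ (2-\gamma_k)/2}\cdot t_k\cdot(\dots)$, and unwinding shows $|D\varphi_k(0)|\to\infty$.

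Next I would extract uniform compactness: we need $\varphi_k$ to stay bounded and $C^{1,\beta}$-equicontinuous on, say, $B_{3/4}$, so that $\varphi_k\to\varphi_\infty$ in $C^1_{\mathrm{loc}}$. Uniform boundedness is where Lemma \ref{april} enters: because $u_k(x_k)=t_k\le \frac1C r_k^{2/(2-\gamma_k)}$ for $C$ large (this is exactly the normalization forcing $r_k$ to be not-too-small, hence why $\tau$ must be chosen small), Lemma \ref{april} gives $\sup_{B_{r_k}(x_k)}u_k \le C r_k^{2/(2-\gamma_k)}$, i.e. $\sup_{B_1}\varphi_k \le C$. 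With $\|\varphi_k\|_{L^\infty(B_1)}\le C$ and $\|\delta_k\|_\infty$ bounded, Theorem \ref{localregthm} (applied with the uniform lower bound $\gamma_\star\ge\theta:=\gamma_\star(0,1)$) yields a uniform $C^{1,\theta/(2-\theta)}$ bound on $B_{1/2}$. Passing to the limit, $\varphi_\infty$ minimizes a limiting functional of the same type (with limiting exponent $\gamma_\infty(\cdot)$, possibly constant $=\theta$ near $0$ if $\gamma$ is only lsc — here one invokes $\gamma_k\to$ its inf via lsc), satisfies $\varphi_\infty\ge 0$, $\varphi_\infty(0)=1>0$, and $D\varphi_k(0)\to D\varphi_\infty(0)$ by $C^1$-convergence — but this contradicts $|D\varphi_k(0)|\to\infty$. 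Hence the estimate holds.

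The main obstacle I expect is the bookkeeping that makes the intrinsic scale $r_k$ well-defined and forces the blow-up $|D\varphi_k(0)|\to\infty$ to survive the rescaling while simultaneously keeping $\varphi_k(0)=1$ exactly (or bounded above and below). Concretely: $\gamma_k=\gamma_\star(x_k,r_k)$ depends on $r_k$, which is defined in terms of $\gamma_k$ — so one needs a fixed-point/monotonicity argument (the map $r\mapsto \gamma_\star(x_k,r)$ is monotone, and $t_k\mapsto$ the solution $r$ of $r^{2/(2-\gamma_\star(x_k,r))}=t_k$ is well-behaved for $t_k$ small by lsc of $\gamma$), and one must verify $r_k\to 0$ with $r_k\ge t_k^{1/2}$ so that the ball $B_{r_k}(x_k)\subset B_{3/4}$ stays inside the domain. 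A secondary subtlety is checking that the hypothesis of Lemma \ref{april} is genuinely satisfied at scale $r_k$, which is precisely the role of choosing $\tau>0$ small enough: smallness of $u_k$ forces $t_k$ small, hence $t_k \le \frac1C r_k^{2/(2-\gamma_k)}$ for the relevant large $C$, activating the $L^\infty$-estimate. Everything else — the scaling identity, the $L^\infty$ and $C^{1,\alpha}$ estimates, lower semicontinuity in the limit, strong convergence of gradients — is either quoted from earlier in the paper or standard.
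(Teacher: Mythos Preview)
Your proposal uses the same key ingredients as the paper --- the intrinsic scale tied to $u(x)$, Lemma~\ref{april} to bound the rescaling, and Theorem~\ref{localregthm} to control its gradient --- but you wrap them in an unnecessary contradiction--compactness argument. The paper proceeds \emph{directly}: fix $x\in\{u>0\}\cap B_{1/2}$, choose $r$ so that $r^{2/(2-\gamma_\star(x,r))}=Cu(x)$ (by the intermediate value theorem, using that $s\mapsto s^{2/(2-\gamma_\star(x,s))}$ is continuous and tends to $0$; this is where lower semicontinuity of $\gamma$ enters, and resolves your fixed-point worry in one stroke), apply Lemma~\ref{april} to get $\sup_{B_r(x)}u\le Cr^{2/(2-\gamma_\star(x,r))}$, rescale to $v(y)=u(x+ry)\,r^{-2/(2-\gamma_\star(x,r))}$, and read off $|Dv(0)|\le L$ from Theorem~\ref{localregthm}. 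Scaling back yields $|Du(x)|\le L\sqrt{C}\,[u(x)]^{\gamma_\star(x,r)/2}\le L\sqrt{C}\,[u(x)]^{\gamma_\star(0,1)/2}$.

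In your version, the extraction of a limit $\varphi_\infty$ is superfluous: once Lemma~\ref{april} gives $\sup_{B_1}\varphi_k\le C$ and the scaled coefficients $\tilde\delta_k$ are bounded, Theorem~\ref{localregthm} already furnishes a uniform bound $|D\varphi_k(0)|\le L$, contradicting $|D\varphi_k(0)|\to\infty$ immediately --- no $C^1$-convergence is needed. Stripping out the contradiction wrapper and the unused limit step, what remains is exactly the paper's direct computation. The smallness of $\tau$ is used only to guarantee $r\le 1/4$, so that $B_r(x)\subset B_{3/4}$ and Lemma~\ref{april} applies; the paper makes this explicit by setting $\tau=\frac{1}{C}(1/4)^{2/(2-\gamma^\star(0,1))}$.
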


\begin{proof}
The case $x \in \partial\{u>0\} \cap B_{1/2}$ follows from Theorem \ref{localregthm}. Indeed, since solutions are locally $C^{1,\beta}$, for some $\beta>0$, the fact that $u$ attains at each $x\in \partial\{u>0\}$ its minimum value implies that $|Du(x)|=0$. 

We now consider $x \in \{u>0\} \cap B_{1/2}$ and choose
$$
    \tau\coloneqq \frac{1}{C}\left( \frac{1}{8} \right)^{\frac{2}{2-\gamma^\ast(0,1)}},
$$
for $C$ as in Lemma \ref{april.2}. Define
$$
	r \coloneqq (C u(x))^{\frac{2-\gamma_{\ast}(0,1)}{2}},
$$
and observe that for the choice of $\tau$, we have
$$
	r \leq (C \tau)^{\frac{2-\gamma_{\ast}(0,1)}{2}} \leq \frac{1}{4}.
$$
We can then apply Lemma \ref{april.2} to obtain
$$
	\sup_{B_r(x)} u \leq C r^{\frac{2}{2-\gamma_\ast(0,1)}}.
$$
Next, define 
$$
    v(y) \coloneqq  u(x+ry)\,r^{-\frac{2}{2-\gamma_\ast(0,1)}} \quad \text{in } \; B_1,
$$
and observe that it satisfies the uniform bound
$$
    \sup_{B_1} v \leq C.
$$
Additionally, by the scaling properties of Section 2, $v$ is a minimizer of a scaled functional as \eqref{fifi} in $B_1$, and so, by Theorem \ref{localregthm},
$$
    \left| Dv (0) \right| \leq L,
$$
for some $L$, depending only on $\gamma_\ast(0,1)$ and universal parameters. This translates into
\begin{eqnarray*}
\left|Du(x)\right| & \leq & L r^{\frac{\gamma_\ast(0,1)}{2-\gamma_\ast(0,1)}} \\
& = & L \sqrt{C} [u(x)]^{\frac{\gamma_\ast(0,1)}{2}},
\end{eqnarray*}
recalling that $C>1$. Since $0\leq u \leq 1$, the proof follows with $\overline{C} \coloneqq 4L^2C$, which depends only on $\gamma_\ast(0,1)$ and universal parameters.
\end{proof}

\begin{remark}\label{estimate far from FB}
We have proved Lemma \ref{pt grad estimate} under the assumption that \eqref{1608} holds. Observe, however, that the conclusion is trivial otherwise. Indeed, if $u(x) > \tau$, then by Lipschitz regularity we have
$$
    |Du(x)|^2 \leq L^2 = L^2 \left(\frac{\tau}{\tau} \right)^{\gamma_\ast(0,1)} \leq  \frac{L^2}{\tau^{\gamma_\ast(0,1)}} [u(x)]^{\gamma_\ast(0,1)}.
$$
\end{remark}

\section{Weak Dini-continuous exponents and sharp estimates}\label{sct beicodebode}

The local regularity result in Theorem \ref{localregthm} yields a $(1+\alpha)-$growth control for a minimizer $u$ near its free boundary, with 
$$1+\alpha \coloneqq 2/(2-\gamma_\ast(z_0,r)).$$
More precisely, if $z_0$ is a free boundary point then $u(z_0) = |Du (z_0)| = 0$. Consequently, with $r=|y-z_0|$, we have, by continuity,
\begin{eqnarray*}
u(y) & \leq & \sup_{x \in B_{r}(z_0)} \left| u(x)-u(z_0) -Du(z_0)\cdot (x-z_0) \right| \\
& \leq & C r^{1+\alpha}\\
& = & C |y-z_0|^{\frac{2}{2-\gamma_\ast(z_0,r)}}.
\end{eqnarray*}
However, such an estimate is suboptimal, and a key challenge is to understand how the oscillation of $\gamma(x)$ impacts the prospective (point-by-point) $C^{1,\alpha}$ regularity of minimizers along the free boundary. 

In this section, we assume $\gamma$ is continuous at a free boundary point $z_0$, with a modulus of continuity $\omega$ satisfying 
\begin{equation}\label{CMC}
	\omega(1) + \limsup_{t\to 0^+} \ \omega(t) \ln \left( \frac{1}{t} \right ) \le \tilde{C},
\end{equation}
for a constant $\tilde{C}>0$. Such a condition often appears in models involving variable exponent PDEs as a critical (minimal) assumption for the theory; see, for instance, \cite{ABF} for functionals with $p(x)$-growth and \cite{BPRT} for the non-variational theory.

Note that assumption \eqref{CMC} is weaker than the classical notion of Dini continuity. In fact, if \eqref{CMC} is violated, then there exists a decreasing sequence $(t_k)_{k\ge 1}$ with $t_k\downarrow 0$ such that
$$
  \omega(t_k)\,\ln\!\Big(\frac{1}{t_k}\Big)\ \longrightarrow\ \infty \quad \text{as } k\to\infty.
$$
We may also build this sequence such that
$$
  t_{k+1}\ \le\ t_k^{\,2}\qquad\text{for all }k\ge 1,
$$
so the intervals $(t_k,\sqrt{t_k}]$ are pairwise disjoint. Since $\omega$ is nondecreasing, we have
$$
  \int_{t_k}^{\sqrt{t_k}} \frac{\omega(t)}{t}\,dt
  \ \ge\ \omega(t_k)\int_{t_k}^{\sqrt{t_k}} \frac{dt}{t}
  \ =\ \omega(t_k)\,\Big[\ln t\Big]_{t_k}^{\sqrt{t_k}}
  \ =\ \frac{1}{2}\,\omega(t_k)\,\ln\!\Big(\frac{1}{t_k}\Big).
$$
Summing over $k$ and using the disjointness of $(t_k,\sqrt{t_k}]$, we obtain
$$
  \int_{0}^{1} \frac{\omega(t)}{t}\,dt
  \ \ge\ \sum_{k=1}^{\infty} \int_{t_k}^{\sqrt{t_k}} \frac{\omega(t)}{t}\,dt
  \ \ge\ \frac{1}{2}\,\sum_{k=1}^{\infty} \omega(t_k)\,\ln\!\Big(\frac{1}{t_k}\Big)
  \ =\ \infty,
$$
which proves that
$$
  \int_{0}^{1} \frac{\omega(t)}{t}\,dt\ =\ \infty,
$$
and thus, $\gamma$ is not Dini continuous.

\medskip

We are ready to state a sharp pointwise regularity estimate for local minimizers of \eqref{fifi} under \eqref{CMC}. We define the subsets 
$$
	\Omega(u) \coloneqq  \left \{x\in B_{1} \colon u(x) > 0\right \} \quad \text{ and } \quad F(u) \coloneqq  \partial \Omega(u),
$$
corresponding to the non-coincidence set and the free boundary of the problem, respectively. 

\begin{theorem}\label{thm-beicodebode}
	Let $u$ be a local minimizer of \eqref{fifi} in $B_1$ and $z_0 \in F(u) \cap B_{1/2}$. Assume $\gamma$ satisfies \eqref{CMC} at $z_0$. 
Then, there exist universal constants $r_0>0$ and $C^\prime>1$ such that
\begin{equation}\label{thm}
		 u(y)\leq C^\prime \left |y-z_0 \right |^{\frac{2}{2-\gamma(z_0)}},
	\end{equation}
for all $y \in B_{r_0}(z_0)$.
\end{theorem}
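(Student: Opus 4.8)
The plan is to argue by iteration on dyadic balls centered at $z_0$, comparing the growth of $u$ against the target profile $r^{2/(2-\gamma(z_0))}$. Set $\beta_0 := \frac{2}{2-\gamma(z_0)}$. For $j \geq 0$ write $r_j = 2^{-j} r_0$ and let $S_j := r_j^{-\beta_0}\sup_{B_{r_j}(z_0)} u$. The goal is to show $\sup_j S_j \leq C'$ for a universal $C'$. I would proceed by contradiction/compactness combined with a discrete iteration: if $S_j$ were to grow along the dyadic sequence, then the rescalings $v_j(x) := r_j^{-\beta_0}\, u(z_0 + r_j x)/S_j$ (or rather $u(z_0+r_jx)/\sup_{B_{r_j}} u$) would, by the scaling computation of Section 2, minimize a functional of the form $\int \frac12|Dv|^2 + \tilde\delta_j(x) v^{\tilde\gamma_j(x)}$ with $\tilde\gamma_j(x) = \gamma(z_0 + r_j x)$ and, crucially, $\|\tilde\delta_j\|_{L^\infty(B_1)} \lesssim S_j^{\gamma_\star(z_0,r_j) - 2} r_j^2 \cdot r_j^{-\beta_0(\gamma^\star - \gamma_\star)}\cdot(\text{stuff})$ — here is exactly where condition \eqref{CMC} enters, because $\gamma^\star(z_0,r_j) - \gamma_\star(z_0,r_j) \leq \omega(2r_j)$, and the dangerous factor $r_j^{-c\,\omega(2r_j)} = \exp(c\,\omega(2r_j)\ln(1/r_j))$ stays bounded precisely by \eqref{CMC}. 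Thus the perturbation coefficient remains controlled, and one extracts a harmonic limit that vanishes at $0$ with $\sup_{B_1} = 1$, contradicting the strong maximum principle — the same scheme as Lemma \ref{april} but now iterated so that the \emph{fixed} exponent $\gamma(z_0)$ (rather than the $r$-dependent $\gamma_\star(z_0,r)$) governs the growth.

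More concretely, I would set up a dichotomy lemma: there is a universal $\kappa > 1$ and $\mu \in (0,1)$ such that for every $j$,
\begin{equation*}
\sup_{B_{r_{j+1}}(z_0)} u \;\leq\; \max\left\{ \kappa\, r_{j+1}^{\beta_0}\, ,\; 2^{-\beta_0}\,\sup_{B_{r_j}(z_0)} u \right\}\cdot(1 + C\,\omega(2 r_j))^{\pm 1},
\end{equation*}
or, cleaner, work with the quantity $\widetilde S_j := r_j^{-\beta_0}\sup_{B_{r_j}(z_0)} u$ and show $\widetilde S_{j+1} \leq \max\{\kappa, (1-\theta)\widetilde S_j\}$ up to a multiplicative error $e_j$ with $\prod_j e_j < \infty$ thanks to $\sum_j \omega(2r_j) < \infty$ — wait, that needs summability of $\omega(2^{-j}r_0)$, which is Dini-type; but \eqref{CMC} is weaker, so instead the error must be absorbed additively/logarithmically: the correct bookkeeping is to track $\ln \widetilde S_j$ and show $\ln\widetilde S_{j+1} \leq \ln \widetilde S_j - \theta$ whenever $\widetilde S_j$ exceeds a threshold, with an additive defect $C\,\omega(2r_j)\ln(1/r_j) \cdot$(bounded), and then use that $\omega(t)\ln(1/t)$ has a finite limit, hence $\omega(2r_j)\ln(1/r_j) \to$ finite, so \emph{the defect is uniformly bounded, not summable}; this forces $\widetilde S_j$ to stay below threshold $+ $ bounded constant for all $j$, which is exactly \eqref{thm}. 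That is the real mechanism: \eqref{CMC} buys a uniformly bounded (not decaying) logarithmic correction per scale, which is still enough because we only need an $L^\infty$ bound, not decay.

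The main obstacle — and the step I would spend the most care on — is making the dichotomy/iteration quantitative in a way that is compatible with the mere boundedness (rather than summability) of $\omega(t)\ln(1/t)$. One cannot run a naive geometric decay argument; instead, I would prove the single-step improvement by the compactness argument of Lemma \ref{april} (rescale, use Theorem \ref{localregthm} for $C^{1,\alpha}$-equicontinuity with the uniform lower bound $\gamma_\star(0,1) > 0$, pass to a harmonic limit, invoke the strong maximum principle), being careful that the constant $\kappa$ and the gain $\theta$ produced are \emph{universal} and independent of $j$, and that the admissible radius $r_0$ is chosen small enough (depending on $\tilde C$ and universal data via \eqref{CMC}) so that every rescaled coefficient $\tilde\delta_j$ lies below the smallness threshold the compactness step requires. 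A secondary technical point is handling non-dyadic $y$: once the dyadic bound $\sup_{B_{r_j}(z_0)} u \leq C' r_j^{\beta_0}$ is established, a general $y \in B_{r_0}(z_0)$ with $r_{j+1} \leq |y - z_0| < r_j$ gives $u(y) \leq \sup_{B_{r_j}(z_0)} u \leq C' r_j^{\beta_0} \leq C' 2^{\beta_0} |y-z_0|^{\beta_0}$, absorbing the factor $2^{\beta_0} \leq 4$ (since $\beta_0 \leq 2$) into $C'$. Finally I would note that continuity of $u$ and $u(z_0) = 0$ start the induction at $j = 0$ with $\widetilde S_0$ controlled by the normalization $\|u\|_\infty \leq 1$ together with the local $C^{1,\alpha}$ bound.
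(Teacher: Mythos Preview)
Your iterative/compactness scheme is a much heavier route than the paper's, and the part you correctly flag as delicate --- the per-scale error accounting under the mere boundedness of $\omega(t)\ln(1/t)$ --- is not resolved by what you wrote. The inequality ``$\ln\widetilde S_{j+1} \le \ln\widetilde S_j - \theta + C\omega(2r_j)\ln(1/r_j)$ with bounded defect'' does \emph{not} force $\widetilde S_j$ to stay bounded unless the defect is eventually smaller than $\theta$, which \eqref{CMC} does not guarantee. A contradiction version (assume $\sup_j\widetilde S_j=\infty$, rescale by $\sup_{B_{r_j}}u$, use \eqref{CMC} to bound the stray factor $r_j^{-c(\gamma(z_0)-\gamma_\star(z_0,r_j))}$ so that $\tilde\delta_j\to 0$, extract a harmonic limit vanishing at $0$) would work, but you have not written that cleanly either.

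The paper bypasses all iteration. It simply applies Theorem~\ref{localregthm} once at the scale $r=|y-z_0|$: since $u(z_0)=|Du(z_0)|=0$, the $C^{1,\alpha}$ estimate with $\alpha=\gamma_\star(z_0,r)/(2-\gamma_\star(z_0,r))$ already gives
\[
u(y)\le C\,r^{\frac{2}{2-\gamma_\star(z_0,r)}}.
\]
Then one converts the exponent in a single algebraic step: since $t\mapsto 2/(2-t)$ is $2$-Lipschitz on $[0,1]$ and $\gamma(z_0)-\gamma_\star(z_0,r)\le\omega(r)$,
\[
r^{\frac{2}{2-\gamma_\star(z_0,r)}}=r^{-\bigl[\frac{2}{2-\gamma(z_0)}-\frac{2}{2-\gamma_\star(z_0,r)}\bigr]}\,r^{\frac{2}{2-\gamma(z_0)}}\le r^{-2\omega(r)}\,r^{\frac{2}{2-\gamma(z_0)}}=e^{2\omega(r)\ln(1/r)}\,r^{\frac{2}{2-\gamma(z_0)}},
\]
and \eqref{CMC} bounds the exponential factor by a universal constant. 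That is the entire proof.

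The comparison: you already invoke Theorem~\ref{localregthm}, but only for equicontinuity inside a compactness step. The paper uses it for its full quantitative content --- the growth bound with the $r$-dependent exponent --- and then \eqref{CMC} is exactly the condition that makes the single multiplicative discrepancy $r^{-2\omega(r)}$ bounded. No dyadic sequence, no accumulation of errors, no dichotomy lemma.
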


\begin{proof}
Since \eqref{CMC} is in force, let $r_0 \ll 1$ be such that, for $r< r_0$, 
\begin{equation}\label{vergalhão}
\omega(r) \ln \left( \frac{1}{r} \right ) \le 2 \left[ \tilde{C}-\omega(1)\right] =: C^\ast.
\end{equation}
Fix $y \in B_{r_0}(z_0)$ and let 
$$r\coloneqq |y-z_0| < r_0.$$
Apply Theorem \ref{localregthm} to $u$ over $B_{r}(z_0)$, to get
$$\sup_{x\in B_r(z_0)} u(x) \leq C\, r^{\frac{2}{2-\gamma_\ast(z_0,r)}}.$$
In particular, by continuity, it follows that
\begin{equation}\label{sct3-Eq2}
	u(y) \leq C \, r^{\frac{2}{2-\gamma_\ast(z_0,r)}}.
\end{equation}
In view of \eqref{CMC}, we can estimate
$$\gamma(z_0) - \gamma_\ast(z_0,r) \le \omega(r),$$ 
and, since the function $g\colon [0,1] \to [1,2]$ given by
$$
	g(t) \coloneqq  \frac{2}{2-t}
$$
satisfies $g'(t) \le 2$, for all $t\in [0,1]$, we have
\begin{eqnarray*}
		 g\left (\gamma(z_0) \right ) - g\left (\gamma_\ast(z_0,r) \right )  &\le& 2 \left (\gamma(z_0) - \gamma_\ast(z_0,r) \right ) \\
		&\le & 2\, \omega(r ).
\end{eqnarray*}
Combining \eqref{sct3-Eq2} with this inequality, and taking \eqref{vergalhão} into account, we reach 
\begin{eqnarray*}
		\displaystyle u(y) &\leq& \displaystyle C  \, r^{-\left [g\left (\gamma(z_0) \right ) - g\left (\gamma_\ast(z_0,r) \right ) \right ]}\, r^{\frac{2}{2-\gamma(z_0)}} \\
		&\le & C\, r^{-2\omega(r)} \, r^{\frac{2}{2-\gamma(z_0)}} \\
		&\le & C\, e^{2C^\ast} \, r^{\frac{2}{2-\gamma(z_0)}} \\
		&=& C^\prime \, |y-z_0|^{\frac{2}{2-\gamma(z_0)}},
\end{eqnarray*}
as desired.  
\end{proof}

We also obtain a sharp strong non-degeneracy result.

\begin{theorem}\label{sharp nondeg} Let $u$ be a local minimizer of \eqref{fifi} in $B_1$ and $z_0 \in F(u) \cap B_{1/2}$. Assume \eqref{bound delta} and that \eqref{CMC} is in force at $z_0$. Then, there exist universal constants $r_0>0$ and $c^\ast> 0$ such that
$$
	\sup_{\partial B_r(z_0)} u \geq c^\ast \, r^{\frac{2}{2-\gamma(z_0)}},
$$
for every $0<r < r_0$. 
\end{theorem}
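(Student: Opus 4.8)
The plan is to mirror the proof of Theorem \ref{nondeg}, but now exploiting the continuity hypothesis \eqref{CMC} at $z_0$ to replace the exponent $\gamma^\star(y,r)$ by the frozen value $\gamma(z_0)$, up to a harmless multiplicative error. First I would recall that Theorem \ref{nondeg} already gives, for all small $r$,
\[
\sup_{\partial B_r(z_0)} u \geq c\, r^{\frac{2}{2-\gamma^\star(z_0,r)}},
\]
with $c>0$ depending only on $n$, $\delta_0$ and $\gamma_\star(0,1)$. The only remaining task is to compare the exponent $\frac{2}{2-\gamma^\star(z_0,r)}$ with $\frac{2}{2-\gamma(z_0)}$ and absorb the discrepancy.

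Next I would carry out the exponent comparison exactly as in the proof of Theorem \ref{thm-beicodebode}. Fix $r_0 \ll 1$ so that $\omega(r)\ln(1/r) \le C^\ast := 2[\tilde C - \omega(1)]$ for all $r < r_0$, as in \eqref{vergalhão}. Since $\gamma^\star(z_0,r) - \gamma(z_0) \le \omega(r)$ and the function $g(t) = \frac{2}{2-t}$ satisfies $\tfrac12 \le g'(t) \le 2$ on $[0,1]$, we get
\[
g\big(\gamma^\star(z_0,r)\big) - g\big(\gamma(z_0)\big) \le 2\,\omega(r).
\]
Therefore, for $0 < r < r_0$,
\[
r^{\frac{2}{2-\gamma^\star(z_0,r)}} = r^{\frac{2}{2-\gamma(z_0)}} \cdot r^{\,g(\gamma^\star(z_0,r)) - g(\gamma(z_0))} \ge r^{\frac{2}{2-\gamma(z_0)}} \cdot r^{2\omega(r)} \ge r^{\frac{2}{2-\gamma(z_0)}} \cdot e^{-2C^\ast},
\]
where in the last step I used $r^{2\omega(r)} = e^{-2\omega(r)\ln(1/r)} \ge e^{-2C^\ast}$. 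Combining this with the non-degeneracy bound from Theorem \ref{nondeg} yields
\[
\sup_{\partial B_r(z_0)} u \ge c\, e^{-2C^\ast}\, r^{\frac{2}{2-\gamma(z_0)}} =: c^\ast\, r^{\frac{2}{2-\gamma(z_0)}},
\]
which is the claim.

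There is no serious obstacle here; the result is essentially a corollary of Theorem \ref{nondeg} combined with the frozen-exponent trick already established for the upper bound. The one point to be slightly careful about is the direction of the inequality: in Theorem \ref{thm-beicodebode} one needs an \emph{upper} bound and hence uses $\gamma(z_0) - \gamma_\star(z_0,r) \le \omega(r)$ with the exponent going the "good" way, whereas here one needs a \emph{lower} bound, so one uses $\gamma^\star(z_0,r) - \gamma(z_0) \le \omega(r)$ and controls $r^{+2\omega(r)}$ from below. Since $r<1$, the factor $r^{2\omega(r)}$ is less than $1$, so it genuinely shrinks the right-hand side, but only by the universal multiplicative constant $e^{-2C^\ast}$, which is exactly what one can afford. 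One should also double-check that the radius restriction $0<r<r_0$ is consistent with the "$0 < r \ll 1$" required by Theorem \ref{nondeg}, shrinking $r_0$ further if necessary; this is immediate since all constants are universal.
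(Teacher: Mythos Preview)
Your proof is correct and follows essentially the same route as the paper: start from the non-degeneracy estimate of Theorem \ref{nondeg}, bound the exponent gap $g(\gamma^\star(z_0,r)) - g(\gamma(z_0)) \le 2\omega(r)$, and absorb $r^{2\omega(r)} \ge e^{-2C^\ast}$ via \eqref{vergalhão}. The only cosmetic difference is that the paper computes the exponent difference directly as $\frac{2(\gamma^\star(z_0,r)-\gamma(z_0))}{(2-\gamma^\star(z_0,r))(2-\gamma(z_0))} \le 2\omega(r)$ rather than invoking the bound on $g'$, but the two arguments are equivalent.
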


\begin{proof}
As before, let $r_0 \ll 1$ be such that \eqref{vergalhão} holds and fix $r<r_0$. From Theorem \ref{nondeg}, we know
$$\sup_{\partial B_r(z_0)} u \geq c \, r^{\frac{2}{2-\gamma^\ast(z_0,r)}},$$
with $c>0$ depending only on $n$, $\delta_0$ and $\gamma_\ast (0,1)$.

Now, observe that 
$$\frac{2}{2-\gamma^\ast(z_0,r)} = \frac{2}{2-\gamma(z_0)} + \frac{2}{2-\gamma^\ast(z_0,r)} - \frac{2}{2-\gamma(z_0)}$$
and
\begin{eqnarray*}
\displaystyle \frac{2}{2-\gamma^\ast(z_0,r)} - \frac{2}{2-\gamma(z_0)} & = &  \displaystyle \frac{2 ( \gamma^\ast(z_0,r) -\gamma(z_0) )}{( 2- \gamma^\ast(z_0,r)) \, ( 2-\gamma(z_0) )} \\
& \leq & \displaystyle  2 ( \gamma^\ast(z_0,r) -\gamma(z_0) )\\
& \leq & \displaystyle  2 \omega(r). 
\end{eqnarray*}
Thus,
\begin{eqnarray*}
\displaystyle r^{\frac{2}{2-\gamma^\ast(z_0,r)}} & \geq&   \displaystyle r^{2 \omega(r)} r^{\frac{2}{2-\gamma(z_0)}} \\
& = &  \displaystyle e^{2 \omega(r) \ln r} \, r^{\frac{2}{2-\gamma(z_0)}}\\
& \geq &  \displaystyle e^{-2 C^{\ast}} \, r^{\frac{2}{2-\gamma(z_0)}},
\end{eqnarray*}
due to \eqref{vergalhão}, and the result follows with $c^\ast \coloneqq  c \, e^{-2 C^{\ast}}$.
\end{proof}	

With sharp regularity and non-degeneracy estimates at hand, we can now prove the positive density of the non-coincidence set and the porosity of the free boundary. Recall that a set $E \subset \mathbb{R}^n$ is said to be porous if there exists a constant $\kappa \in (0,1)$ and $r_0>0$ such that, for every $r \leq r_0$ and every $x \in E$, there is $y \in \mathbb{R}^n$ such that
$$
    B_{\kappa r}(y) \subseteq B_r(x) \setminus E.
$$

\begin{theorem}\label{pos_density} Let $u$ be a local minimizer of \eqref{fifi} in $B_1$ and $z_0 \in F(u) \cap B_{1/2}$. Assume \eqref{bound delta} and that \eqref{CMC} is in force at $z_0$. There exists a constant $\mu_0 > 0$, depending on $n$, $\delta_0$, $\gamma_\ast (0,1)$ and the constant from \eqref{CMC}, such that
$$
	\frac{\left |B_r(z_0) \cap \Omega(u) \right |}{\left |B_r(z_0) \right |} \ge \mu_0,
$$
for every $0<r<r_0.$ In particular, $F(u)$ is porous and there exists an $\epsilon>0$ such that $\mathcal{H}^{n-\epsilon}(F(u) \cap B_{1/2}) = 0$.
\end{theorem}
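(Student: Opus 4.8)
The plan is to combine the sharp upper estimate from Theorem~\ref{thm-beicodebode} with the strong non-degeneracy from Theorem~\ref{sharp nondeg} in the standard way. Fix $z_0 \in F(u)\cap B_{1/2}$ and $0<r<r_0$. By Theorem~\ref{sharp nondeg}, there is a point $y_r \in \partial B_{r/2}(z_0)$ with
$$
u(y_r) \ge c^\ast \left( \tfrac{r}{2} \right)^{\frac{2}{2-\gamma(z_0)}}.
$$
On the other hand, for any $x$ in the connected component of $B_r(z_0)\cap\{u=0\}$ — more precisely, for any $x$ with $u(x)=0$ — Theorem~\ref{thm-beicodebode} applied at a free boundary point (or simply the sharp growth from $z_0$, combined with the interior gradient estimate) controls $u$ near $x$; concretely, since $y_r\in\Omega(u)$, the nearest free boundary point $\xi\in F(u)$ to $y_r$ satisfies $|\xi - z_0|\le r$, hence $\xi\in B_{r_0}(\cdot)$ is a valid center, and Theorem~\ref{thm-beicodebode} gives $u(y_r)\le C'\,\dist(y_r, F(u))^{\frac{2}{2-\gamma(\xi)}}$. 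To avoid the shifting exponent $\gamma(\xi)$, the cleaner route is: use Lemma~\ref{pt grad estimate} (the gradient estimate $|Du|^2\lesssim u^{\gamma_\star(0,1)}$) to deduce that $u^{1-\gamma_\star(0,1)/2}$ is Lipschitz near the free boundary, so that $u(y_r)^{1-\gamma_\star(0,1)/2}\le C\,\dist(y_r,F(u))$; combined with the pointwise upper bound $u\le 1$ and the lower bound on $u(y_r)$ above, this forces $\dist(y_r, F(u)) \ge \kappa r$ for a universal $\kappa\in(0,1)$.

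Thus the ball $B_{\kappa r}(y_r)$ is entirely contained in $\Omega(u)$, and also in $B_{(1+\kappa)r/2}(z_0)\subset B_r(z_0)$ provided $\kappa\le 1$. Therefore
$$
\frac{|B_r(z_0)\cap\Omega(u)|}{|B_r(z_0)|} \ge \frac{|B_{\kappa r}(y_r)|}{|B_r(z_0)|} = \kappa^{n} =: \mu_0 > 0,
$$
with $\mu_0$ depending only on $n$, $\delta_0$, $\gamma_\star(0,1)$ and $\tilde C$ — exactly the dependencies inherited from Theorems~\ref{thm-beicodebode} and~\ref{sharp nondeg} and Lemma~\ref{pt grad estimate}. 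I should be slightly careful with the range of validity: the above requires $y_r$ to lie in a region where Lemma~\ref{pt grad estimate} applies (i.e.\ where $0\le u\le\tau$), which holds after passing to a sufficiently small $r_0$ using that $u$ is continuous and vanishes at $z_0$; alternatively one invokes Remark~\ref{estimate far from FB} to cover the complementary case.

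For the porosity statement, recall that a set $E$ is porous if there is $\sigma\in(0,1)$ such that every ball $B_\rho(x)$ with $x\in E$, $\rho$ small, contains a ball $B_{\sigma\rho}(z)$ disjoint from $E$. Apply this with $E = F(u)$: given $x_0\in F(u)\cap B_{1/2}$ and $0<\rho<r_0$, the non-degeneracy (Theorem~\ref{sharp nondeg}) produces $y\in\partial B_{\rho/2}(x_0)$ with $u(y)$ large, and the argument of the previous paragraph yields $B_{\kappa\rho/2}(y)\subset\Omega(u)$, hence $B_{\kappa\rho/2}(y)\cap F(u)=\emptyset$ and $B_{\kappa\rho/2}(y)\subset B_\rho(x_0)$; so $F(u)$ is porous with constant $\sigma=\kappa/2$. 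It is classical that a porous set in $\mathbb{R}^n$ has Hausdorff (indeed box-counting) dimension strictly less than $n$: a quantitative iteration of the porosity condition shows $\dim_{\mathcal H}(F(u)) \le n - c\,\sigma^n$ for a dimensional constant $c>0$, so $\mathcal{H}^{n-\epsilon}(F(u)\cap B_{1/2}) = 0$ for $\epsilon := c\,\sigma^n >0$. The main obstacle is the first paragraph: ensuring the exponent in the sharp upper bound is pinned to $\gamma(z_0)$ (or, better, handled uniformly via the Lipschitz character of $u^{1-\gamma_\star(0,1)/2}$) rather than drifting with the base point, and checking that all the smallness thresholds ($r_0$, $\tau$) can be chosen universally; once that bookkeeping is done, the measure-density and porosity conclusions are routine.
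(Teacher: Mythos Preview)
Your ``cleaner route'' has a genuine gap. From Lemma~\ref{pt grad estimate} you correctly infer that $u^{1-\gamma_\star(0,1)/2}$ is Lipschitz near the free boundary, hence
\[
u(y_r)^{\,1-\gamma_\star(0,1)/2}\;\le\; C\,\dist(y_r,F(u)).
\]
But combining this with $u(y_r)\ge c^\ast (r/2)^{2/(2-\gamma(z_0))}$ yields only
\[
\dist(y_r,F(u))\;\ge\; c\, r^{\frac{2-\gamma_\star(0,1)}{\,2-\gamma(z_0)\,}},
\]
and the exponent $\frac{2-\gamma_\star(0,1)}{2-\gamma(z_0)}$ is \emph{strictly greater than $1$} whenever $\gamma(z_0)>\gamma_\star(0,1)$. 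For small $r$ this gives $\dist(y_r,F(u))\ge c\,r^{1+\theta}$ with $\theta>0$ fixed, which does \emph{not} imply $\dist(y_r,F(u))\ge \kappa r$ for a universal $\kappa$. The CMC condition~\eqref{CMC} cannot rescue this, because $\gamma_\star(0,1)$ is a fixed global quantity that does not improve as $r\to 0$. (A minor additional slip: $B_{\kappa r}(y_r)\subset B_r(z_0)$ requires $\kappa\le 1/2$, not $\kappa\le 1$.)

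The approach you sketched first and then abandoned---apply Theorem~\ref{thm-beicodebode} at the nearest free boundary point $\xi$ to get $u(y_r)\le C'\,d^{2/(2-\gamma(\xi))}$ with $d=\dist(y_r,F(u))$---is exactly what the paper does, and the ``shifting exponent'' $\gamma(\xi)$ is handled by noting that $|\xi-z_0|\lesssim r$, so $|\gamma(\xi)-\gamma(z_0)|\le\omega(Cr)$ and the CMC condition bounds the discrepancy $d^{2/(2-\gamma(\xi))-2/(2-\gamma(z_0))}$ (or rather its effect after comparing with $r^{2/(2-\gamma(z_0))}$) by a universal constant, exactly as in the proof of Theorem~\ref{thm-beicodebode}. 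That is the step you needed to carry out rather than avoid.
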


\begin{proof}
Fix $r < r_0$, with $r_0$ as in Theorem \ref{thm-beicodebode}. It follows from the non-degeneracy (Theorem \ref{sharp nondeg}) that there exists $y \in \partial B_r(z_0)$ such that
$$u (y) \geq c^\ast r^{\frac{2}{2-\gamma(z_0)}}.$$
Now, let $z \in F(u)$ be such that 
$$
    \left| z-y \right| = \dist (y, F(u)) \eqqcolon d.
$$
Then, we have
$$
    c^\ast r^{\frac{2}{2-\gamma(z_0)}} \leq u(y) \leq \sup_{B_d(z)} u \leq C d^{\frac{2}{2-\gamma(z)}}.
$$
Furthermore, observe that
$$
    |z-z_0| \leq |z-y| + |y-z_0| \leq d+r,
$$
and so, since $d \leq r$, we have $|z-z_0| \leq 2r$. Therefore, one can proceed as in Theorem \ref{thm-beicodebode} to obtain
$$
    c^\ast r^{\frac{2}{2-\gamma(z_0)}} \leq u(y) \leq Cd^{\frac{2}{2-\gamma(z_0)}}.
$$
This implies that 
$$
    r  \leq  \left(\frac{C}{c^\ast} \right)^{\frac{2-\gamma(z_0)}{2}}  d \leq  \max \left\{ 1, \frac{C}{c^\ast} \right\} d.
$$
So for $\kappa = \min \left\{ 1, c^\ast / C \right\}$, we have
$$
    B_{\kappa r}(y) \subset B_d(y) \subset \Omega(u).
$$

Since also $B_{\kappa r}(y) \subset B_{2r}(z_0)$, we conclude
$$
    |B_{2r}(z_0)\cap \Omega(u)| \geq \left(\frac{\kappa}{2} \right)^n \alpha(n) (2r)^n,
$$
where $\alpha(n)$ is the volume of the unit ball in $\R^n$, and the result follows with $\mu_0 = \left(\frac{\kappa}{2} \right)^n$. 

We have shown that for any $z \in F(u)$, there exists a point $y_z \in \Omega(u)$ and a universal constant $\kappa > 0$ such that 
$B_{\kappa r}(y_z) \subset \Omega(u) \cap B_r(z)$. In particular,
$$
    B_{\kappa r}(y_z) \subseteq B_r(z) \setminus F(u).
$$
The last implication follows from classical results in \cite{Z}.
\end{proof}

Assuming $\gamma$ is continuous, with modulus of continuity satisfying \eqref{CMC}, we can get an improved version of \eqref{april.2} with optimal exponents.

\begin{lemma}\label{april} 
Let $u$ be a local minimizer of the energy-functional \eqref{fifi} in $B_1$. Assume that
$$
    \gamma_\ast (0,1)>0,
$$
and $\gamma$ is continuous with modulus of continuity satisfying \eqref{CMC}. There exists a constant $C>4$, depending only on $\gamma_\ast (0,1)$ and universal parameters, such that, if 
\begin{equation*}
    u(x) \le \frac{1}{C} \, r^{\frac{2}{2-\gamma(x)}}, 
\end{equation*}
for $x\in B_{1/2}$ and $r \leq 1/4$, then
\begin{equation}\label{conclusion flat lemma}
    \sup_{B_r(x)} u \leq C r^{\frac{2}{2-\gamma(x)}}.
\end{equation}  
\end{lemma} 

\begin{proof}
The proof follows similar steps as in Lemma \ref{april.2}, and we only highlight the main steps. Fix $r<1/4$ and consider $j_r \in \mathbb{N}$ to be such that
$$
    2^{-(j_r+1)} \leq r < 2^{-j_r}.
$$
For $j \in \{1, 2, \cdots, j_r \}$, define
$$
    S_j(x,u) \coloneqq \sup_{B_{2^{-j}}(x)}u \qquad \text{and} \qquad a_j \coloneqq 2^{j \beta(x)}S_j(x,u),
$$
where
$$
    \beta(x) = \frac{2}{2-\gamma(x)},
$$
and for $j=j_r+1$ we define
$$
    a_{j_r + 1} \coloneqq r^{\frac{-2}{2-\gamma(x)}}\sup_{B_r(x)}u.
$$
Then, to obtain \eqref{conclusion flat lemma}, it is enough to prove 
\begin{equation}\label{recursive arg shaggy 2}
    a_{j+1} \leq \max\{C,a_j\}, \quad \forall j \in \{1, 2, \cdots, j_r\}.
\end{equation}
Let us now suppose, seeking a contradiction, that \eqref{recursive arg shaggy 2} fails. Then, for each integer $k>0$, there exist a minimizer $u_k$ of \eqref{fifi} in $B_1$, $x_k \in B_{1/2}$ and $ 0< r_k < 1/4$, such that 
$$
    u_k(x_k) \le \frac{1}{k} \, r_k^{\beta(x_k)},
$$
but
\begin{equation}\label{equiv inequality 2}
    a_{j_k + 1} > \max\{k,a_{j_k}\}, \qquad \text{for some} \ j_k \in \{1,2,\cdots, j_{r_k}\}.
\end{equation}
In the sequel, define
$$
    \varphi_k(x)\coloneqq \frac{u_k(x_k+2^{-j_k}x)}{S_{j_k + 1}(x_k,u_k)} \quad \text{in} \; B_1.
$$
For this function, there holds
\begin{equation}\label{this better work 2}
    \sup_{B_1} \varphi_k \leq 4, \qquad \sup_{B_{1/2}} \varphi_k = 1, \qquad \text{and} \quad \varphi_k(0) = O(k^{-2}).
\end{equation}
Indeed, from \eqref{equiv inequality 2}, we obtain
\begin{align*}
    \sup_{B_1} \varphi_k  & = \frac{S_{j_k}(x_k,u_k)}{S_{j_k + 1}(x_k,u_k)}\\ 
    & < \frac{2^{(j_k+1)\beta(x_k)}}{2^{j_k\beta(x_k)}} \leq 4 .
\end{align*}
From scaling, it directly follows that $\sup_{B_{1/2}} \varphi_k = 1$, and finally,
$$
    \varphi_k(0) \leq \frac{1}{k^2}\,\frac{r_k^{\beta(x_k)}}{2^{(j_k+1)\beta(x_k)}} \leq \frac{1}{k^2}.
$$
In addition, note that $\varphi_k$ minimizes
$$
    v \longmapsto \int_{B_1} \frac{1}{2} \left| Dv \right|^2 + \delta_k(x) v^{\gamma_k(x)}\, dx,
$$
for
$$
    \delta_k(x)\coloneqq  \delta(x_k+2^{-j_k}x)\frac{2^{-2j_k}}{s_k^{2-\gamma(x_k+2^{-j_k}x)}} \quad \text{and} \quad \gamma_k(x)\coloneqq \gamma(x_k+2^{-j_k}x),
$$ 
where
$$
    s_k \coloneqq S_{j_k + 1}(x_k,u_k).
$$
From \eqref{equiv inequality 2}, we obtain
\begin{eqnarray*}
s_k^{\gamma(x_k+2^{-j_k}x)-2}2^{-2j_k} & \leq & 4\, s_k^{\gamma(x_k+2^{-j_k}x)-2} \left( \frac{s_k}{k}\right)^{2-\gamma(x_k)} \\
& = & 4\, s_k^{\gamma(x_k+2^{-j_k}x)-\gamma(x_k)} \left( \frac{1}{k}\right)^{2-\gamma(x_k)}\\
& \leq & \frac{C}{k},
\end{eqnarray*}
for each $x \in B_1$ and for some universal constant $C$. The last estimate is guaranteed since $s_k = O(2^{-j_k \beta(x_k)})$, and so we can uniformly bound the term
$$
    s_k^{\gamma(x_k+2^{-j_k}x)-\gamma(x_k)}.
$$
Hence,
\begin{equation}\nonumber
    \|\delta_k\|_{L^\infty(B_1)} \leq C \, \|\delta\|_{L^\infty(B_1)} k^{-1}.
\end{equation}
We can then apply Theorem \ref{localregthm} to get a contradiction by passing to the limit.
\end{proof}

With this result, we are able to establish an optimized version of Lemma \ref{pt grad estimate}, assuming that $\gamma(x)$ satisfies condition \eqref{CMC}. 

\begin{lemma}\label{opt pt grad estimate} 
Let $u$ be a local minimizer of the energy-functional \eqref{fifi} in $B_1$. Assume \eqref{bound delta} is in force, and $\gamma$ is continuous with modulus of continuity satisfying \eqref{CMC}. There exists a constant $C$, depending on $\gamma_\ast (0,1)$ and universal parameters, such that
\begin{equation}\nonumber
|Du(x)|^2 \le C\, [u(x)]^{\gamma(x)},
\end{equation}
for each $x \in B_{1/2}$.
\end{lemma}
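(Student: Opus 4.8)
The plan is to combine the optimized sup-control \eqref{opt control for sup} with a scaling/rescaling argument exactly parallel to the proof of Lemma \ref{pt grad estimate}, the only difference being that the exponent $\gamma_\star(x,r)$ is now replaced by the pointwise value $\gamma(x)$ thanks to \eqref{CMC}. As before, the free boundary case $x \in F(u) \cap B_{1/2}$ is immediate: minimizers are $C^{1,\beta}$ by Theorem \ref{localregthm}, and at a free boundary point $u$ attains its minimum value $0$, so $|Du(x)| = 0$ and the estimate holds trivially. We may also assume $0 \le u \le \tau$ in $B_1$, $\tau$ the small universal parameter from Lemma \ref{pt grad estimate}, since otherwise Remark \ref{estimate far from FB} (with $\gamma_\star(0,1)$ replaced by $\gamma(x)$, using $0 \le u \le 1$) disposes of the estimate.

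So fix $x \in \Omega(u) \cap B_{1/2}$. As in Lemma \ref{pt grad estimate}, since $s \mapsto s^{2/(2-\gamma(x))} \to 0$ as $s \to 0^+$, we may pick $r>0$ universally small with
$$
r^{\frac{2}{2-\gamma(x)}} = C u(x),
$$
$C$ the constant from Lemma \ref{april}; choosing $\tau$ smaller if necessary guarantees $r \le 1/4$ and $r < r_0$, with $r_0$ the radius from \eqref{opt control for sup}. By construction $u(x) = \frac1C r^{2/(2-\gamma(x))}$, so \eqref{1508} holds at $x$, and the remark preceding the lemma gives
$$
\sup_{B_r(x)} u \le C_1 r^{\frac{2}{2-\gamma(x)}}.
$$
Now set
$$
v(y) := u(x+ry)\, r^{-\frac{2}{2-\gamma(x)}}, \qquad y \in B_1,
$$
which satisfies $\sup_{B_1} v \le C_1$ and, by the scaling of Section \ref{prelim-sect} (with $A = r$, $B = r^{2/(2-\gamma(x))}$), is a minimizer of a functional of the form \eqref{fifi} with rescaled, uniformly bounded coefficients. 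Theorem \ref{localregthm} yields $|Dv(0)| \le L$ for a universal $L$, hence
$$
|Du(x)| = L\, r^{\frac{\gamma(x)}{2-\gamma(x)}} = L\,(Cu(x))^{\frac{\gamma(x)}{2-\gamma(x)} \cdot \frac{2-\gamma(x)}{2}} = L\sqrt{C}\,[u(x)]^{\frac{\gamma(x)}{2}},
$$
and squaring gives the claim with $C = L^2 C_1$.

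The only point requiring genuine care — and the reason this is stated as a separate lemma rather than a corollary — is ensuring that the bootstrapping choice of $r$ is \emph{universally} small and that the rescaled coefficient $\tilde\delta$ stays bounded: this is exactly where \eqref{CMC} is essential, since without it $\gamma_\star(x,r)$ and $\gamma(x)$ could differ by an amount that spoils the scaling exponent, and \eqref{opt control for sup} would fail. Concretely, one must check that for $r$ determined by $r^{2/(2-\gamma(x))} = C u(x) \le C\tau$, the constraints $r \le 1/4$ and $r < r_0$ can be met by an up-front choice of $\tau$ depending only on $C$, $C_1$, $\gamma_\star(0,1)$, $n$ and $r_0$; since $t \mapsto t^{2/(2-\gamma(x))}$ has exponent in $[1,2]$ uniformly in $x$, this is a matter of taking $\tau \le \frac1C \min\{(1/4)^2, r_0^2\}$, which is harmless. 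With that fixed, the argument closes exactly as above.
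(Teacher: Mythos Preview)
Your approach matches the paper's proof step for step. The one place you fall short is precisely the step the paper flags as ``what remains to be shown'': verifying that the rescaled coefficient $\tilde\delta$ stays uniformly bounded. With $A=r$ and $B=r^{2/(2-\gamma(x))}$ (rather than $B=r^{2/(2-\gamma_\star(x,r))}$ as in Lemma~\ref{pt grad estimate}), the scaling of Section~\ref{prelim-sect} gives
\[
\|\tilde\delta\|_{L^\infty(B_1)} \le r^{\frac{2}{2-\gamma(x)}\bigl(\gamma_\star(x,r)-\gamma(x)\bigr)}\,\|\delta\|_{L^\infty(B_r(x))},
\]
and since $\gamma_\star(x,r)-\gamma(x)\le 0$ this factor is $\ge 1$ and can blow up as $r\to 0$. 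It is bounded only because \eqref{CMC} controls $|\gamma_\star(x,r)-\gamma(x)|\ln(1/r)$ --- the same mechanism used for \eqref{opt control for sup}. You correctly identify this as the delicate point, but your ``Concretely'' paragraph only handles the smallness of $r$ and never carries out this estimate. Since this is the actual content of the lemma (everything else is a repeat of Lemma~\ref{pt grad estimate}), it should be written out.

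Two minor slips: in your final display the first ``$=$'' should be ``$\le$'', and the constant you obtain after squaring is $L^2 C$ (with $C$ from Lemma~\ref{april}), not $L^2 C_1$.
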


\begin{proof}
The proof is essentially the same as the proof of Lemma \ref{pt grad estimate}, except for the steps we highlight below. By Remark \ref{estimate far from FB}, it is enough to prove the result at points such that $0 \leq u(x) \leq \tau$. 
Choose $r$ so that
$$
  r^{\frac{2}{2-\gamma(x)}} = C\,u(x),
$$
which can be made small (depending on $\tau$). By Lemma \ref{april}, the rescaled
$$
  v(y) \coloneqq u(x+ry)\,r^{-\frac{2}{2-\gamma(x)}}
$$
is uniformly bounded in $B_1$. The scaling from Section \ref{prelim-sect} yields
$$
  \|\tilde{\delta}\|_{L^\infty(B_1)}
  \ \le\ r^{\frac{2}{2-\gamma(x)}\,(\gamma_\ast(x,r)-2)}\,r^2\,\|\delta\|_{L^\infty(B_1)}
  \ =\ r^{2(\gamma_\ast(x,r)-\gamma(x))}\,\|\delta\|_{L^\infty(B_1)},
$$
which is uniformly bounded thanks to \eqref{CMC}. Lipschitz bounds for $v$ follow, and the claim is proved. 
\end{proof}

\begin{example} We conclude this section with an insightful observation leading to a class of intriguing free boundary problems. Initially, it is worth noting that the proof of the existence of a minimizer can be readily adapted for more general energy-functionals of the form
\begin{equation}\label{funct ex}
    J(v) \coloneqq \int \frac{1}{2} |Dv|^2 + \delta(x) (v^+)^{\gamma(x, v(x))} \, dx,
\end{equation}
provided $\gamma \colon \Omega \times \mathbb{R} \to \mathbb{R}$ is a Carath\'eodory function, which is bounded from above away from $1$ and bounded from below away from $0$. We further emphasize that our local $C^{1,\alpha}$ regularity result, Theorem \ref{localregthm}, also applies to this class of functionals. 

To illustrate the applicability of these results, let us consider the following toy model, where the varying singularity $\gamma(x,v)$ is given only globally measurable and bounded, such that $\gamma(x,v) \geq 1/6$, and
\begin{equation}
\gamma(x,v)=
\dfrac{1}{2}-\dfrac{3}{(\ln(\min(v(x),e^{-3})))^{2}}.
\end{equation}
The function $\gamma$ is Dini continuous, but not better, at the region
$$\{u = 0\} \supset F(u),$$
for any minimizer $u$ of the corresponding functional $J$ in \eqref{funct ex}. Since
$$
\gamma_\ast(0,1)= \dfrac 16,
$$
the local regularity estimate obtained in Theorem \ref{localregthm}, gives that minimizers are locally of class $C^{12/11}$. In contrast, observe that 
$$
\gamma \equiv \dfrac{1}{2} \quad \text{at } \; F(u),
$$
and so, Theorem \ref{thm-beicodebode} asserts that local minimizers are {\it precisely} of class $C^{4/3}$ at free boundary points. A wide range of meaningful examples can be constructed out of functions obtained in \cite[{\it Section} 2]{APPT}.

Applying a similar reasoning, we can provide examples of energy-function\-als for which minimizers are locally of class $C^{1,\epsilon}$, for $0<\epsilon \ll 1$, whereas along the free boundary, they are $C^{1,1-\epsilon}-$regular. We anticipate revisiting the analysis of such models in future investigations. 
\end{example}

\section{Hausdorff measure estimates}\label{sct Hausdorff}

In this section, we prove Hausdorff measure estimates for the free boundary under the stronger regularity assumptions on the data
\begin{equation}\label{nunca derivei tanto}
    \delta(x) \in W^{2,\infty}(B_1) \quad \text{and} \quad \gamma(x) \in W^{2,\infty}(B_1).
\end{equation}
Differentiability of the free boundary will be obtained in Section \ref{sct FB reg}, assuming only $\delta, \gamma \in W^{1,q}(B_1)$, for some $q>n$.

Furthermore, we shall also assume
\begin{equation}\label{gamma cima menor q 1}
        \gamma^\ast(0,1) \coloneqq \gamma^\ast(B_1(0)) < 1.
\end{equation}

We will need a few preliminary results, as in \cite{AP}. We begin with a slightly different pointwise gradient estimate compared to Lemma \ref{opt pt grad estimate}.

\begin{lemma}\label{slightly dif pt grad est}
Let $u$ be a local minimizer of the energy-functional \eqref{fifi} in $B_1$. Assume \eqref{bound delta}, \eqref{CMC}, \eqref{gamma cima menor q 1}, and \eqref{nunca derivei tanto} are in force and let $x_0 \in F(u) \cap B_{1/2}$. There exists a constant $c_1$, depending only on $n$, $\delta_0$, $\gamma_\ast (0,1)$, $\|D\delta\|_\infty$, $\|D^2\delta\|_\infty$, $\|D\gamma\|_\infty$ and $\|D^2\gamma\|_\infty$, such that
$$
    |Du (x)|^2 \leq 2\delta(x)\,[u(x)]^{\gamma(x)} + c_1u(x),
$$
for each $x \in B_{1/8} (x_0)$.
\end{lemma}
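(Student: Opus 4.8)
The plan is to run a contradiction-compactness argument in the spirit of the proof of Lemma \ref{pt grad estimate}, but now exploiting the extra $W^{2,\infty}$ regularity of $\delta$ and $\gamma$ to produce the sharper inequality with the \emph{correct} coefficient $2\delta(x)$ in front of $[u(x)]^{\gamma(x)}$, plus a linear error $c_1 u(x)$. The natural auxiliary function is
$$
\Phi(x) := |Du(x)|^2 - 2\delta(x)[u(x)]^{\gamma(x)} - c_1 u(x),
$$
and the goal is $\Phi \le 0$ in $B_{1/8}(x_0)$. First I would establish that $\Phi$ is a subsolution of a suitable linear elliptic equation inside $\Omega(u) \cap B_{1/4}(x_0)$: differentiating the Euler--Lagrange equation $\Delta u = \delta(x)\gamma(x) u^{\gamma(x)-1}$ and using the Bochner formula $\Delta |Du|^2 = 2|D^2u|^2 + 2 Du \cdot D(\Delta u)$, one computes $\Delta \Phi$ and checks that all the dangerous negative-power terms $u^{\gamma-2}$, $u^{\gamma-1}\ln u$, etc., either cancel against the $-2\delta u^\gamma$ piece or are absorbed (after using $|Du|^2 \lesssim u^{\gamma}$, which is Lemma \ref{opt pt grad estimate}, and $|D^2u|^2 \ge \tfrac1n (\Delta u)^2$) into the $-c_1 u$ term, provided $c_1$ is chosen large depending on $n$, $\delta_0$, $\gamma_\star(0,1)$, $\|D\delta\|_\infty$, $\|D^2\delta\|_\infty$, $\|D\gamma\|_\infty$, $\|D^2\gamma\|_\infty$. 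This is the computation that genuinely needs the second derivatives of the data (the terms with $D\delta\cdot Du$, $D\gamma \cdot Du$, $\Delta\delta$, $\Delta\gamma$, $|D\gamma|^2 u^{\gamma}(\ln u)^2$, all of which are controlled once $|Du| \lesssim u^{\gamma/2}$ and $u \le 1$).

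Next, on the free boundary itself the situation is delicate: $\Phi$ need not be continuous up to $F(u)$ in an obvious way, and one cannot simply apply the maximum principle on $\Omega(u)\cap B_{1/4}(x_0)$ with boundary data on $F(u)$. The standard device (as in \cite{AP}) is to argue by contradiction and compactness: suppose the estimate fails, so there are minimizers $u_k$, free boundary points $x_k$, and points $y_k \in B_{1/8}(x_k)$ with $|Du_k(y_k)|^2 > 2\delta_k(y_k)[u_k(y_k)]^{\gamma_k(y_k)} + k\, u_k(y_k)$. Using the sharp growth $u_k(y) \le C'|y-x_k|^{2/(2-\gamma_k(x_k))}$ from Theorem \ref{thm-beicodebode} together with Lemma \ref{opt pt grad estimate}, one shows $y_k$ must approach $F(u_k)$ at a rate forcing $d_k := \dist(y_k, F(u_k)) \to 0$ relative to $u_k(y_k)^{(2-\gamma_k)/2}$; rescaling $v_k(z) := u_k(y_k + d_k z)/u_k(y_k)$ (or, better, the scaling normalized so $v_k(0)=1$) and passing to the limit, the error term $k\,u_k(y_k)$ blows up against the bounded quantities $|Dv_k|^2$ and $\delta_k v_k^{\gamma_k}$ after the appropriate homogeneity bookkeeping — this yields the contradiction. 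The homogeneities here are exactly those made explicit in the Scaling subsection, and condition \eqref{CMC} is what keeps $\gamma_\star(x,r)$ and $\gamma(x)$ within a negligible multiplicative error under rescaling.

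The step I expect to be the main obstacle is the first one: carrying out the Bochner computation for $\Delta\Phi$ cleanly and verifying that every term involving negative or logarithmic powers of $u$ is either eliminated by the precise coefficient $2\delta(x)$ or dominated by $c_1 u(x)$, uniformly as $u \to 0^+$. In particular one must be careful that the term $(1-\gamma)\gamma\delta u^{\gamma-2}|Du|^2$ coming from $\Delta(u^\gamma)$ is cancelled by the corresponding term in $\Delta|Du|^2$ (via differentiating $\Delta u$), which is precisely why the constant in front must be exactly $2\delta$ and not, say, $C\delta$; getting this algebra right, and correctly tracking the sign, is the crux. A secondary technical point is justifying the computations distributionally — since $u$ is only $C^{1,\alpha}$ globally and smooth only in $\Omega(u)$, one works in $\Omega(u)$ and invokes the interior elliptic regularity of $\Delta u = \delta\gamma u^{\gamma-1}$ (with $u$ bounded away from $0$ on compact subsets of $\Omega(u)$) to make $D^2 u$, $D^3 u$ meaningful there, then passes to the free boundary only via the compactness argument of the previous paragraph rather than by a direct boundary maximum principle.
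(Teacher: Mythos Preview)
Your proposal has a genuine gap in the Bochner computation. You plan to control $2|D^2u|^2$ from below by $\tfrac{2}{n}(\Delta u)^2 = \tfrac{2}{n}\delta^2\gamma^2 u^{2\gamma-2}$, but the term $-2\Delta(\delta u^\gamma)$ contributes, beyond the $u^{\gamma-2}|Du|^2$ term you correctly identify as cancelling, a piece $-2\delta\gamma u^{\gamma-1}\Delta u = -2\delta^2\gamma^2 u^{2\gamma-2}$. The net contribution at order $u^{2\gamma-2}$ is therefore $2\delta^2\gamma^2\bigl(\tfrac{1}{n}-1\bigr)u^{2\gamma-2}$, which is strictly negative in any dimension $n\ge 2$ and blows up as $u\to 0$. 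This cannot be absorbed into $c_1 u$, nor does $|Du|^2\lesssim u^\gamma$ help, since the term has no $|Du|$ factor. So $\Phi$ is \emph{not} a subsolution in $\Omega(u)$, and the strategy as stated fails.

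The paper circumvents this by \emph{not} trying to make $\Phi$ subharmonic everywhere. Instead it argues at a hypothetical interior positive maximum $p$ of
\[
w(y)=|Du(y)|^2-2\delta(y)u(y)^{\gamma(y)}-Ku(y)-\zeta(|y-x_0|)u(y)^{\gamma(y)},
\]
where $\zeta$ is a cubic cutoff vanishing on $B_\tau(x_0)$ and chosen (via Lemma~\ref{opt pt grad estimate}) so that $w\le 0$ on $\partial B_{3\tau}(x_0)$; on $F(u)$ one has $w=0$ automatically from $C^{1,\alpha}$ regularity. The key algebraic trick is then to use the \emph{first-order} condition $\partial_1 w(p)=0$ (after rotating so $Du(p)\parallel e_1$), together with $w(p)>0\Rightarrow |\partial_1 u|>\sqrt{2\delta}\,u^{\gamma/2}$, to extract the sharp lower bound $2\partial_{11}u\ge 2\delta\gamma u^{\gamma-1}+K+\cdots$. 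Squaring gives $2(\partial_{11}u)^2\ge 2\delta^2\gamma^2 u^{2\gamma-2}+2\delta\gamma\eta K u^{\gamma-1}+\cdots$, and since $\sum(\partial_{kj}u)^2\ge(\partial_{11}u)^2$ this \emph{exactly} cancels the dangerous $-2\delta^2\gamma^2 u^{2\gamma-2}$ term, with the cross term in $K$ left over to dominate the rest for $K$ large. Your compactness alternative for the boundary is unnecessary once the cutoff $\zeta$ is in place, and in any case it cannot repair the missing $u^{2\gamma-2}$ cancellation in the interior.
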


\begin{proof}
Let $K_0>0$, $\tau = 1/8$ and consider $\zeta \colon [0,3\tau] \mapsto \mathbb{R}$, defined by 
\[
\zeta(t)=
\begin{cases}
  0, & t\in[0,\tau],\\
  K_0\,(t-\tau)^3, & t\in[\tau,3\tau].
\end{cases}
\]
For $K_1>0$ a large constant to be chosen later, define
$$
    w(y) \coloneqq |Du(y)|^2 - 2\delta(y) [u(y)]^{\gamma(y)} - K_1 u(y) - \zeta(|y-x_0|) [u(y)]^{\gamma(y)},
$$
for $y \in \Omega(u) \cap B_{3\tau}(x_0)$. By Lemma \ref{opt pt grad estimate}, we can suitably choose $K_0>0$ so that $w \leq 0$ on $\partial B_{3\tau}(x_0)$, and so $w \leq 0$ on $\partial (\Omega(u) \cap B_{3\tau}(x_0))$, since $w \equiv 0$ on $\partial \Omega(u)$. Indeed, from that lemma, there is a constant $C$ such that
$$
    |Du(x)|^2 \leq C[u(x)]^{\gamma(x)}, \quad \text{for} \quad x \in B_{1/2}.
$$
For $y \in \partial B_{3\tau}(x_0)$, we have
\begin{align*}
    w(y) &\leq |Du(y)|^2 - \zeta(|y-x_0|)[u(y)]^{\gamma(y)}\\
         & = |Du(y)|^2 - 8\tau^3{K_0}[u(y)]^{\gamma(y)}\\
         & \leq |Du(y)|^2 - C[u(y)]^{\gamma(y)},
\end{align*}
as long as $K_0$ is chosen such that $8 \tau^3 K_0 \geq C$. We will show that $w \leq 0$ in $\Omega(u) \cap B_{3\tau}(x_0)$. To do so, we assume, to the contrary, that $w$ attains a positive maximum at $p \in \Omega(u) \cap B_{3\tau}(x_0)$. Since $w$ is smooth within $\Omega(u)$ and $p$ is a point of maximum for $w$, we have $\Delta w(p) \leq 0$. To reach a contradiction, we will show that if $K_1>0$ is chosen large enough, then $\Delta w(p) > 0$.

We will omit the point $p$ whenever possible to ease the notation. We also rotate the coordinate system so that $e_1$ is in the direction of $Du(p)$. We then have
\begin{eqnarray*}
    0 & = & \partial_1 w(p)  \\
      & = &  2Du \cdot D\left(\partial_1u\right) - 2\left(\partial_1\delta\right) u^{\gamma} - 2\delta \left(\gamma u^{\gamma-1}\left(\partial_1u\right) + \left(\partial_1\gamma\right) u^\gamma \ln(u) \right) \\
      &  & - K_1\left(\partial_1u\right) - \left(\partial_1\zeta\right) u^\gamma - \zeta\left(\gamma u^{\gamma-1}\left(\partial_1u\right) + \left(\partial_1\gamma\right)\,u^\gamma \ln(u) \right)\\
      & = &   \left(\partial_1 u\right)\left[2\left(\partial_{11}u\right) - \frac{u^\gamma}{\left(\partial_1 u\right)}(2\left(\partial_1 \delta\right) + \left(\partial_1 \zeta\right)) - \gamma(2\delta + \zeta)u^{\gamma - 1} - K_1\right]\\
      & & -  \left(\partial_1 u\right) \left[ \frac{u^\gamma}{\left(\partial_1 u\right)}\left(\partial_1 \gamma\right) \ln(u)(2\delta  + \zeta)\right].
\end{eqnarray*}
Since $\partial_1u(p) >0$, we obtain
\begin{eqnarray*}
    2\left(\partial_{11}u\right) & = &\frac{u^\gamma}{\left(\partial_1 u\right)}(2\left(\partial_1\delta\right) + \left(\partial_1 \zeta\right)) + \gamma(2\delta + \zeta)u^{\gamma - 1} + K_1\\
    & & + \frac{u^\gamma}{\left(\partial_1 u\right)}\left(\partial_1 \gamma\right) \ln(u)(2\delta  + \zeta).
\end{eqnarray*}
Moreover, since $w(p)>0$, we have
$$
	\left(\partial_1 u(p)\right)^2 > 2\delta(p) [u(p)]^{\gamma(p)} + K_1 u(p) + \zeta(|p-x_0|) [u(p)]^{\gamma(p)} \geq 2\delta(p) [u(p)]^{\gamma(p)},
$$
from which follows that $\partial_1 u(p) > \sqrt{2\delta(p)}u(p)^{\frac{\gamma(p)}{2}}$. As a consequence,
$$
    \frac{u^\gamma}{\left(\partial_1 u\right)} \leq \frac{u^\frac{\gamma}{2}}{\sqrt{2\delta}} \leq \frac{1}{\sqrt{2\delta_0}} u^\frac{\gamma}{2}.
$$
This implies that
$$
    2\left(\partial_{11}u\right) \geq 2\delta \gamma u^{\gamma-1} + K_1 + \zeta \gamma u^{\gamma-1} - C_1 u^{\frac{\gamma}{2}} - C_2 u^{\frac{\gamma}{2}}|\ln(u)|,
$$
for constants $C_1 = C_1(\delta_0,\|D\delta\|_\infty,K_0)$ and $C_2 = C_2(\delta_0,\|D\gamma\|_\infty, K_0, \|\delta\|_\infty)$. For a small $\eta^\ast>0$ so that $\gamma/2 - \eta^\ast >0$ and a larger constant $C_3$, we then have
\begin{eqnarray*}
	2\left(\partial_{11}u\right) &\geq & 2\delta \gamma u^{\gamma-1} + K_1 + \zeta \gamma u^{\gamma-1} - C_3 u^{\frac{\gamma}{2}-\eta^\ast}\\
			& = & 2\delta \gamma u^{\gamma-1} + \eta K_1 + \zeta \gamma u^{\gamma-1} + (1-\eta)K_1 - C_3 u^{\frac{\gamma}{2}-\eta^\ast},
\end{eqnarray*}
for $\eta \coloneqq 3/4$, where we used that $u^{\eta^\ast}|\ln(u)|$ is bounded for $u \leq 1$. This fact will be used throughout this section to handle the $\log$-terms. For large $K_1$, it follows that $(1-\eta)K_1 - C_3 u^{\frac{\gamma}{2}-\eta^\ast} \geq 0$, and so
$$
	2\left(\partial_{11}u\right) \geq 2\delta \gamma u^{\gamma-1} +  \eta K_1 + \zeta \gamma u^{\gamma-1}.
$$
Squaring both sides gives
\begin{align*}
4\bigl(\partial_{11}u\bigr)^2 
&\geq \bigl(2\delta \gamma u^{\gamma-1} + \eta K_1 + \zeta \gamma u^{\gamma-1}\bigr)^2 \\[0.5em]
&\geq \bigl(2\delta \gamma u^{\gamma-1}\bigr)^2
   + 2\bigl(2\delta \gamma u^{\gamma-1}\bigr)(\eta K_1)
   + 2\bigl(2\delta \gamma u^{\gamma-1}\bigr)(\zeta \gamma u^{\gamma-1}),
\end{align*}
and so
\begin{equation}\label{ineq for 2 derivative squared}
    \left(\partial_{11}u\right)^2 \geq \left(\delta \gamma u^{\gamma-1}\right)^2 + \delta \gamma \eta K_1 u^{\gamma-1} + \delta \zeta \left(\gamma u^{\gamma-1}\right)^2.
\end{equation}
Now, we calculate $\Delta w$ at the point $p$. By direct computations, we obtain
\begin{eqnarray*}
            \Delta w & = &  2 \sum_{k,j} (\partial_{k,j} u)^2 + 2Du \cdot D(\Delta u) - 2u^\gamma \Delta \delta - 4 D\delta \cdot D(u^\gamma)  \\
             & & - 2\delta\, \Delta(u^\gamma) - K_1\Delta u - u^\gamma \Delta \zeta - 2D\zeta \cdot D(u^\gamma) - \zeta \Delta (u^\gamma).
\end{eqnarray*}
Moreover,
\begin{eqnarray*}
            D(u^\gamma) & = & u^\gamma \ln(u) D\gamma + \gamma u^{\gamma-1}Du,\\[0.2cm]
            \Delta(u^\gamma) & = & u^\gamma \ln(u) \Delta \gamma + u^\gamma (\ln(u))^2 |D\gamma|^2 + 2\gamma u^{\gamma-1} \ln(u) D\gamma \cdot Du \\
             &  & +\, 2u^{\gamma-1} D\gamma \cdot Du + \gamma(\gamma -1)u^{\gamma-2}|Du|^2 + \gamma u^{\gamma-1}\Delta u.
\end{eqnarray*}
To estimate those terms, we take into account that each factor that contains $Du$ can be further estimated by using Lemma \ref{opt pt grad estimate}. Also, each term that contains the derivatives of $\gamma$, can be controlled by $u^{\gamma-1}$, which is possible since $\gamma - 1 < 0$. This gives
$$
    |D(u^\gamma)| \leq C_4 u^{\frac{3\gamma}{2}-1} \leq C_4u^{\gamma - 1},
$$
for $C_4 = C_4(n, \gamma_\ast(0,1), \|\delta\|_\infty, \|D\gamma\|_\infty)$. Furthermore,
$$
    \Delta (u^\gamma) \leq C_5 u^{\gamma-1} + \delta \gamma^2 u^{2\gamma-2}\left[ \frac{(\gamma-1)}{\gamma \delta} \frac{|Du|^2}{u^\gamma} +1 \right],
$$
for a constant $C_5 = C_5(n, \gamma_\ast (0,1), \gamma^\ast(0,1), \|\delta\|_\infty,\|D\gamma\|_\infty, \|D^2 \gamma\|_\infty)$. One can now further estimate $\Delta w$ from below to obtain
\begin{eqnarray*}
     \Delta w & \geq & \displaystyle 2  (\partial_{11} u)^2 - C_6u^{\gamma-1} + 2\delta \gamma (\gamma-1)u^{\gamma-2}|Du|^2\\
            & &\displaystyle  - 2\delta^2 \gamma^2 u^{2\gamma-2} \left[ \frac{(\gamma-1)}{\gamma \delta} \frac{|Du|^2}{u^\gamma} +1 \right] -K_1 \delta \gamma u^{\gamma-1}\\
            & & \displaystyle -\delta \zeta \gamma^2 u^{2\gamma-2} \left[ \frac{(\gamma-1)}{\gamma \delta} \frac{|Du|^2}{u^\gamma} +1 \right]\\[0.4cm]
            & \geq & \displaystyle 2  (\partial_{11} u)^2 - C_6u^{\gamma-1} - K_1 \delta \gamma u^{\gamma-1}- 2\delta^2 \gamma^2 u^{2\gamma-2}  -\delta \zeta \gamma^2 u^{2\gamma-2}, 
\end{eqnarray*}
where we used that $\gamma - 1 < 0$ to disregard the first term in the last bracket, and have used Lemma \ref{opt pt grad estimate} again to estimate
\begin{align*}
    Du \cdot D(\Delta u) & = Du \cdot D(\delta \gamma u^{\gamma-1})\\
                         & =  \delta \gamma ((\gamma-1)u^{\gamma-2}|Du|^2 + u^{\gamma-1}\ln(u)Du \cdot D\gamma)\\
                         & \quad + u^{\gamma-1} Du \cdot (\delta D\gamma + \gamma D\delta)\\
                         & \geq \delta \gamma (\gamma-1)u^{\gamma-2}|Du|^2 - O\left(u^{\gamma-1}\right).
\end{align*}       
We now use \eqref{ineq for 2 derivative squared} to estimate further the second derivative from below, which gives
\begin{eqnarray*}
            \Delta w & \geq & 2\left(\delta \gamma u^{\gamma-1}\right)^2 + 2\delta \gamma \eta K_1 u^{\gamma-1} + 2\delta \zeta \left(\gamma u^{\gamma-1}\right)^2\\
            & & - C_6u^{\gamma-1} - K_1 \delta \gamma u^{\gamma-1}- 2\delta^2 \gamma^2 u^{2\gamma-2}  -\delta \zeta \gamma^2 u^{2\gamma-2}\\           
            & \geq &  2\delta \gamma \eta K_1 u^{\gamma-1} - C_6u^{\gamma-1} - K_1\delta \gamma u^{\gamma-1}\\
            & = & u^{\gamma-1} \left[2\delta\gamma \eta K_1 - C_6 - K_1\delta \gamma   \right].
\end{eqnarray*}
Now, recalling that $\eta = 3/4$, we can choose $K_1$ so large that the above expression is positive. This leads to a contradiction, as discussed before. Since $\zeta$ vanishes on $B_\tau(x_0)$, the result is proved.
\end{proof}

The second preliminary result concerns the integrability of a negative power of the minimizer.

\begin{lemma}\label{integrability of negative power of u}
    Let $u$ be a local minimizer of the energy-functional \eqref{fifi} in $B_1$. Assume \eqref{bound delta}, \eqref{CMC}, \eqref{nunca derivei tanto}, and \eqref{gamma cima menor q 1} are in force. If $0 \in F(u)$, then
    $$
        u(x)^{-\frac{\gamma(x)}{2}} \in L^1 (\Omega(u) \cap B_{1/2}).
    $$
\end{lemma}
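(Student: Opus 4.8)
The plan is to establish the integrability via a decay estimate for the distribution function of $u$, fed into the layer-cake formula; the arithmetic constraint that makes the whole thing work is precisely assumption \eqref{gamma cima menor q 1}.

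\emph{Reduction.} Since $0\le u\le 1$ and, by \eqref{H1} and \eqref{gamma cima menor q 1}, $0<\gamma_\star(0,1)\le\gamma(x)\le\gamma^\star(0,1)<1$ on $B_1$, one has $u(x)^{-\gamma(x)/2}\le u(x)^{-\gamma^\star(0,1)/2}$ on $\Omega(u)$, so it suffices to show that $u^{-s}\in L^1(\Omega(u)\cap B_{1/2})$ for the fixed exponent $s:=\gamma^\star(0,1)/2\in(0,1/2)$. Writing $V(t):=\left|\{0<u<t\}\cap B_{1/2}\right|$, the layer-cake formula gives $\int_{\Omega(u)\cap B_{1/2}}u^{-s}\,dx=s\int_0^\infty V(t)\,t^{-s-1}\,dt$; since $V(t)\le|B_{1/2}|$, the part over $\{t\ge 1\}$ is finite, and everything reduces to a decay estimate for $V(t)$ as $t\to 0^+$.

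\emph{Decay of the distribution function.} The heart of the argument is the claim that there are universal constants $C>0$ and $t_0\in(0,1)$ with $V(t)\le C\,t^{\frac{2-\gamma^\star(0,1)}{2}}$ for every $0<t<t_0$, and it is here that the sharp results of Section \ref{sct beicodebode} enter. For $t$ small set $\rho=\rho(t):=t^{\frac{2-\gamma^\star(0,1)}{2}}$, which satisfies $\rho^{2/(2-\gamma^\star(0,1))}\approx t$. On the one hand, the sharp non-degeneracy (Theorem \ref{sharp nondeg}, and its interior version coming from Theorem \ref{nondeg}), combined with the refined gradient control $|Du|^2\lesssim u^{\gamma}$ (Lemma \ref{opt pt grad estimate}) and the sharp upper bound (Theorem \ref{thm-beicodebode}) — which together upgrade the sphere-wise inequality $\sup_{\partial B_r(z)}u\ge c^\ast r^{2/(2-\gamma(z))}$, $z\in F(u)$, into a genuine interior estimate — yield that $\{0<u<t\}\cap B_{1/2}$ lies in the $C\rho(t)$-neighborhood of $F(u)\cap B_{3/4}$. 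On the other hand, the positive-density estimate (Theorem \ref{pos_density}), which attaches to each $z\in F(u)\cap B_{3/4}$ a ball $B_{\kappa\rho}(y)\subset B_{2\rho}(z)\cap\Omega(u)$ on which $u\gtrsim\rho^{2/(2-\gamma(z))}$, forces any pairwise disjoint family of $\rho$-balls centered on $F(u)\cap B_{3/4}$ to have at most $C\rho^{-(n-1)}$ members; a Vitali covering then covers $F(u)\cap B_{3/4}$ by $\le C\rho^{-(n-1)}$ balls of radius $5\rho$, and summing the volumes gives $V(t)\le C\rho(t)^{-(n-1)}\cdot\rho(t)^n=C\rho(t)=C\,t^{\frac{2-\gamma^\star(0,1)}{2}}$.

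\emph{Conclusion.} Inserting the decay estimate into the layer-cake integral, for $0<t<t_0$ the integrand is bounded by $C\,t^{\frac{2-\gamma^\star(0,1)}{2}}\,t^{-\gamma^\star(0,1)/2-1}=C\,t^{-\gamma^\star(0,1)}$, and $\int_0^{t_0}t^{-\gamma^\star(0,1)}\,dt<\infty$ exactly because $\gamma^\star(0,1)<1$ — that is, assumption \eqref{gamma cima menor q 1}. Together with the trivially finite tail $\int_{t_0}^\infty V(t)\,t^{-s-1}\,dt$, this gives $u^{-s}\in L^1(\Omega(u)\cap B_{1/2})$, hence the lemma.

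\emph{Main obstacle.} The genuine work is the decay estimate, and its delicate point is the passage from non-degeneracy — a statement about the supremum of $u$ over spheres — to the conclusion that $\{0<u<t\}$ is a thin tube around $F(u)$: one must rule out fast oscillation of $u$ near $F(u)$, which is precisely where the pointwise gradient bound of Lemma \ref{opt pt grad estimate} (and the $W^{2,\infty}$-refinement of Lemma \ref{slightly dif pt grad est}) is used, and one must count covering balls with the sharp exponent $n-1$, which is where the positive density of Theorem \ref{pos_density} is needed; both $\gamma_\star(0,1)>0$ (non-degeneracy and genuine $C^{1,\alpha}$ regularity) and $\gamma^\star(0,1)<1$ (final integrability) are essential. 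A complementary tool — sufficient on its own only for $\gamma^\star(0,1)$ away from $1$ — is to first prove, by integration by parts over the level sets $\{u=\varepsilon\}$ (whose boundary terms vanish because $\int_{\{0<u<\varepsilon\}}|Du|^2\to0$ by Lemma \ref{slightly dif pt grad est}), that $\Delta u=\delta(x)\gamma(x)u^{\gamma(x)-1}$ is a locally finite Radon measure on $B_1$, i.e. $u^{\gamma-1}\in L^1_{\mathrm{loc}}(\Omega(u))$, and to combine this with the ball-counting above.
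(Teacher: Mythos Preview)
Your argument has a genuine gap at the ball-counting step, and is in fact circular relative to the paper's logical structure. You assert that the positive-density estimate (Theorem~\ref{pos_density}) ``forces any pairwise disjoint family of $\rho$-balls centered on $F(u)\cap B_{3/4}$ to have at most $C\rho^{-(n-1)}$ members.'' But positive density only yields porosity of $F(u)$, hence $\mathcal{H}^{n-\epsilon}(F(u)\cap B_{1/2})=0$ for \emph{some} small $\epsilon>0$; it does not give the sharp codimension-one count. Concretely, the balls $B_{\kappa\rho}(y_i)\subset B_{2\rho}(z_i)\cap\Omega(u)$ you extract are disjoint and sit inside $B_1$, so volume comparison gives only $N\lesssim\rho^{-n}$. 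To improve to $\rho^{-(n-1)}$ you would need to confine them to a set of measure $\lesssim\rho$ --- e.g.\ a slab $\{c_1t<u<c_2t\}$ of controlled volume --- but bounding that volume is precisely the decay $V(t)\lesssim\rho(t)$ you are trying to prove. The sharp count you invoke is the content of the paper's $\mathcal{H}^{n-1}$ estimate, which is established \emph{after} and \emph{using} Lemma~\ref{integrability of negative power of u}. Your ``complementary tool'' ($u^{\gamma-1}\in L^1_{\mathrm{loc}}$ via integrating $\Delta u$) is fine but, as you note, only settles the range $\gamma^\star\le 2/3$, and your proposal to ``combine this with the ball-counting above'' re-imports the same circularity.

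The paper breaks this circle by proving the lemma directly with a test-function computation that requires no a~priori Hausdorff information on $F(u)$: one sets $w=u^{2-\frac{3}{2}\gamma(x)}$, integrates $Dw\cdot D\rho_\delta(\min(u,\epsilon))$ by parts over small balls $B_\tau(z)$, and uses the refined pointwise gradient bound of Lemma~\ref{slightly dif pt grad est} to show $\Delta w\ge c\,u^{-\gamma/2}-|a(x)|$ with $a$ controlled by $|D\gamma|+|D^2\gamma|$. This produces a uniform bound on $\int_{\{u>\epsilon/4\}\cap B_\tau(z)}u^{-\gamma/2}$, and one passes to the limit $\epsilon\to 0$. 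Only afterwards is this $L^1$ bound fed into the proof of the $(n-1)$-dimensional Hausdorff estimate --- which is essentially your decay inequality $V(t)\lesssim t^{(2-\gamma^\star)/2}$, now established non-circularly.
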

    
\begin{proof}
Observe that it is enough to show that
\begin{equation}\label{local_integrability}
    u(x)^{-\frac{\gamma(x)}{2}} \in L^1(\Omega(u) \cap B_\tau(z)), 
\end{equation}
for some small $\tau>0$ and every $z \in F(u)$. Indeed, once this is proved, we can cover $F(u) \cap B_{1/2}$ with finitely many balls with radius $\tau>0$, say $\{B_\tau(z_i) \}$. Then,
$$
    \int\limits_{\Omega(u) \cap \left(\cup B_\tau(z_i)\right)}u^{-\frac{\gamma(x)}{2}}\, dx \leq \sum_{i} \int\limits_{\Omega(u) \cap B_\tau(z_i)}u^{-\frac{\gamma(x)}{2}}\, dx \leq C.
$$
Also, by continuity of $u$ and nondegeneracy (Theorem \ref{nondeg}), we have
$$
    u \geq c \quad \text{in} \quad \left(\Omega(u) \cap B_{1/2} \right) \setminus \bigcup_{i} B_\tau(z_i),
$$
from which the statement in the lemma follows. 

To prove \eqref{local_integrability}, we follow closely the argument in \cite[Lemma 2.5]{P2}. Set
$$
	w \coloneqq u^{2-\frac{3}{2}\gamma(x)}.
$$  First, take $\rho \in C^\infty(\mathbb{R}^+)$, satisfying $\rho' \geq 0$, $\rho \equiv 0$ in $[0,1/2]$ and $\rho(t) = t$ in $[1,\infty)$. For $\delta>0$, let $\rho_\delta(t) \coloneqq \delta \rho(\delta^{-1}t)$. If $\delta < \epsilon$, then the quantity
\begin{equation}\label{penalization argument}
    A \coloneqq \frac{1}{\epsilon} \int_{\{0 \leq u < \epsilon \} \cap B_\tau(z)} \left(Dw \cdot Du\right) \, \rho_\delta'(u)\,dx
\end{equation}
can be written as
$$
    \frac{1}{\epsilon} \int_{ B_\tau(z)} Dw \cdot D\left(\rho_\delta(\min(u,\epsilon))\right)\, dx,
$$
where $z \in F(u)\cap B_{1/2}$. Integrating this quantity by parts, we obtain
$$
    A = - \frac{1}{\epsilon} \int_{B_\tau(z)} \rho_\delta(\min(u,\epsilon)) \Delta w\,dx + \int_{\partial B_\tau(z)} \frac{\rho_\delta(\min(u,\epsilon) )}{\epsilon} \partial_{\nu} w\, d\mathcal{H}^{n-1}.
$$
Pick $\delta = \epsilon/2$. Taking into account that $\rho_\delta(u) = 0$ in the set $\{ 0 < u \leq \epsilon/4 \}$, we have
\begin{eqnarray*}
        A & = & \displaystyle -\frac{1}{2}\int_{\{ \epsilon/4 < u \leq \epsilon\}\cap B_\tau(z)} \rho\left(\frac{2}{\epsilon} u\right) \Delta w\,dx  - \int_{\{ \epsilon < u \}\cap B_\tau(z)} \Delta w\,dx \\
        &  & +\displaystyle \int_{\partial B_\tau(z)} \frac{\rho_\delta(\min(u,\epsilon) )}{\epsilon} \partial_{\nu} w\, d\mathcal{H}^{n-1},
\end{eqnarray*}
where we used that 
$$
	\frac{1}{\epsilon}\rho_\delta(\min(u,\epsilon)) = 1, \text{ in the set }\, \{u>\epsilon\}.	
$$
By Lemma \ref{opt pt grad estimate}, we have
\begin{eqnarray*}
        |Dw(x)| & \leq & \displaystyle 2|D\gamma(x)|u(x)^{2-\frac{3}{2}\gamma(x)} |\ln(u(x))|\\
        &  & + \displaystyle  \left(2-\frac{3}{2}\gamma(x)\right)u(x)^{1-\frac{3}{2}\gamma(x)}|Du(x)|  \\
         & \leq & \displaystyle C\left(|D\gamma(x)|+1\right),
\end{eqnarray*}
for some universal constant $C>0$, and so
\begin{equation}\label{pen arg II}
    A \leq C\tau^{n-1} - \frac{1}{2}\int_{\{ \epsilon/4 < u \leq \epsilon\}\cap B_\tau(z)} \rho\left(\frac{2}{\epsilon} u\right) \Delta w\,dx  - \int_{\{ \epsilon < u \}\cap B_\tau(z)} \Delta w\,dx.
\end{equation}
By direct computations, it follows that
\begin{eqnarray*}
\Delta w(x) & = & a(x) + \left(2-\frac{3}{2}\gamma(x)\right) \left( \left(1-\frac{3}{2}\gamma(x)\right)u(x)^{-\frac{3}{2}\gamma(x)}|Du(x)|^2 \right.\\
& & \qquad \qquad \qquad \qquad \qquad \ \  + u(x)^{1-\frac{3}{2}\gamma(x)}\Delta u(x) \bigg),
\end{eqnarray*}
where
\begin{eqnarray*}
        a(x) & = & \displaystyle -\frac{3}{2}w(x)\ln(u(x))\Delta \gamma(x) + \frac{9}{4}w(x)(\ln(u(x)))^2|D\gamma(x)|^2\\
         &  & -\displaystyle  3u(x)^{1-\frac{3}{2}\gamma(x)} D\gamma(x)\cdot Du(x)\\
         &  & -\displaystyle 3\left(2-\frac{3}{2}\gamma(x)\right)\ln(u(x))u(x)^{1-\frac{3}{2}\gamma(x)} D\gamma(x) \cdot Du(x).
\end{eqnarray*}
Let us estimate $\Delta w$ from below. We use Lemma \ref{opt pt grad estimate} to obtain
\begin{align*}
	|a(x)| & \leq  2|D^2\gamma(x)|u(x)^{2-\frac{3}{2}\gamma(x)}|\ln(u(x))| + 3|D\gamma(x)|^2u(x)^{2-\frac{3}{2}\gamma(x)}|\ln(u(x))|^2\\
	&  \quad + C|D\gamma(x)|u(x)^{1-\gamma(x)} + C|D\gamma(x)|u(x)^{1-\gamma(x)}|\ln(u(x))|\\
	& \leq  C_1\left(|D\gamma(x)| + |D\gamma(x)|^2 +|D^2\gamma(x)|\right) \leq C_2
\end{align*}
for $C_2$ depending on $C_1$ and $\|D\gamma\|_\infty$ and $\|D^2 \gamma\|_\infty$, where we used that $u \leq 1$ and that the function $r^{\gamma'} \ln(r^{-1})$ is bounded for $\gamma' > 0$ and $r \leq 1$. To bound the second term in the expression for $\Delta w$, we split into two cases: if $1-\frac{3}{2}\gamma(x) \leq 0$, we use Lemma \ref{slightly dif pt grad est} to get a universal constant $C>0$ such that
\begin{eqnarray*}
    \displaystyle \left(\left(1-\frac{3}{2}\gamma(x)\right)u(x)^{-\frac{3}{2}\gamma(x)}|Du(x)|^2 + u(x)^{1-\frac{3}{2}\gamma(x)}\Delta u(x) \right) & = &   \\ 
     \displaystyle u(x)^{\frac{-\gamma(x)}{2}}\,\left(\left(1-\frac{3}{2}\gamma(x)\right)\frac{|Du(x)|^2}{u(x)^{\gamma(x)}} + \delta(x) \gamma(x)  \right) & \geq &\\
     \displaystyle u(x)^{\frac{-\gamma(x)}{2}}\,\left(\left(1-\frac{3}{2}\gamma(x)\right) \left(2\delta(x) + Cu(x)^{1-\gamma(x)}\right) + \delta(x) \gamma(x)  \right)& \geq & \\
     \displaystyle u(x)^{\frac{-\gamma(x)}{2}}\,\left(2\delta(x)(1-\gamma(x))  - 2Cu(x)^{1-\gamma(x)}  \right) & \eqqcolon & L(x). 
\end{eqnarray*}
We use Theorem \ref{thm-beicodebode} and that $\gamma^\ast(0,1) < 1$ to obtain
\begin{eqnarray*}
	L(x) & \geq & u(x)^{\frac{-\gamma(x)}{2}}\,\left(2\delta_0 \left(1-\gamma^\ast(0,1)\right) - 2C\tau^{\frac{2\left(1-\gamma^\ast(0,1)\right)}{2-\gamma(z)}} \right)\\
	  & \geq & \delta_0 \left(1-\gamma^\ast(0,1)\right) u(x)^{\frac{-\gamma(x)}{2}}, 
\end{eqnarray*}
provided $\tau$ is chosen small enough. In the case when $1-\frac{3}{2}\gamma(x) \geq 0$, we simply estimate
\begin{eqnarray*}
    \displaystyle \left(\left(1-\frac{3}{2}\gamma(x)\right)u(x)^{-\frac{3}{2}\gamma(x)}|Du(x)|^2 + u(x)^{1-\frac{3}{2}\gamma(x)}\Delta u(x) \right) & \geq &   \\ 
     \displaystyle \delta(x) \gamma(x) u(x)^{\frac{-\gamma(x)}{2}}& \geq &\\
     \displaystyle \delta_0 \gamma_\ast(0,1) u(x)^{\frac{-\gamma(x)}{2}}. & & 
\end{eqnarray*}
In any case, we obtain
$$
    \Delta w(x) \geq -C_2 + c u(x)^{-\frac{\gamma(x)}{2}},
$$
for a constant $c = c(\delta_0,\gamma^\ast(0,1),\gamma_\ast(0,1))$. Therefore, by \eqref{pen arg II}, we have
\begin{align*}
        A & \leq  \displaystyle C\tau^{n-1} +  \frac{1}{2}\int_{\{ \epsilon/4 < u \leq \epsilon\}\cap B_\tau(z_i)} \rho\left(\frac{2}{\epsilon} u\right) \left(C_2 - c u(x)^{-\frac{\gamma(x)}{2}}\right)\\
        &  \quad -\displaystyle  \int_{\{ \epsilon < u \}\cap B_\tau(z_i)} \left(-C_2 + cu(x)^{-\frac{\gamma(x)}{2}}\right)\\
        & \leq  \displaystyle C_3 \tau^{n-1} - c\int_{\{ \epsilon/4 < u \}\cap B_\tau(z_i)} u(x)^{-\frac{\gamma(x)}{2}},
\end{align*}
which gives
\begin{equation}\label{consequence eq above}
    A \leq C_3 \tau^{n-1} - c\int_{\{ \epsilon/4 < u \}\cap B_\tau(z_i)} u(x)^{-\frac{\gamma(x)}{2}}.
\end{equation}
Now, we estimate $A$ from below using \eqref{penalization argument}. By Lemma \ref{opt pt grad estimate} and since $\gamma^\ast(0,1) < 1$, we obtain
\begin{eqnarray*}
       Dw \cdot Du  & \geq & - 2u(x)^{2-\frac{3}{2}\gamma(x)} |\ln(u(x))| |D\gamma(x)|\,|Du(x)|\\
         & \geq & -Cu(x)^{2 - \gamma(x)} |\ln(u(x))| |D\gamma(x)|\\
         & \geq & -Cu(x)^{2 - \gamma^\ast(0,1)} |\ln(u(x))| |D\gamma(x)|\\
         & \geq & -C_4u(x)\, |D\gamma(x)|,
\end{eqnarray*}
for a larger constant $C_4$. Thus, from \eqref{penalization argument}, we have
$$
    -C_4 \frac{1}{\epsilon}\int_{\{0 \leq u < \epsilon \} \cap B_\tau(z)} u(x) |D\gamma(x)| \, \rho_\delta'(u)\,dx \leq A.
$$
Putting this together with \eqref{consequence eq above}, and taking into account that $\rho_\delta ' \leq 1$, we obtain
$$
   \int_{\{ \epsilon/4 < u \}\cap B_\tau(z)} u(x)^{-\frac{\gamma(x)}{2}} \leq C_5 \tau^{n-1}, 
$$
for a constant $C_5$ with the dependencies of the one in Lemma \ref{slightly dif pt grad est} and $\gamma^\ast(0,1)$. We get the result by passing to the limit as $\epsilon \rightarrow 0$.
\end{proof}

We are now ready to state and prove the main result of this section.

\begin{theorem}
Let $u$ be a local minimizer of the energy-functional \eqref{fifi} in $B_1$. Assume \eqref{bound delta}, \eqref{CMC}, \eqref{nunca derivei tanto}, and \eqref{gamma cima menor q 1} are in force. Then, there exists a universal constant $C>0$,
depending only on $n$, $\delta_0$, $\gamma_\ast (0,1)$, $\gamma^\ast (0,1)$, $\|D\delta\|_\infty$, $\|D^2 \delta\|_{\infty}$, $\|D\gamma\|_\infty$ and $\|D^2 \gamma\|_{\infty}$, such that
$$
\mathcal{H}^{n-1}(F(u) \cap B_{1/2}) < C.
$$
\end{theorem}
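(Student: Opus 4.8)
The plan is to adapt the Alt--Phillips strategy (\cite{AP}) for measure estimates on free boundaries, the key object being the auxiliary function
\[
w := u^{\,1-\gamma(x)/2},
\]
chosen because along $F(u)$ it inherits the scaling of a \emph{Lipschitz} function: by Theorem~\ref{thm-beicodebode} and Theorem~\ref{sharp nondeg}, $w$ behaves like $\dist(\cdot,F(u))$ near the free boundary. First I would record that $w$ is locally Lipschitz in $B_{3/4}$. Away from $F(u)$ this is immediate, since there $u$ is bounded below, of class $C^{1,\alpha}$, and $\gamma\in W^{2,\infty}$. Near a point $z_0\in F(u)\cap B_{1/2}$ one writes
\[
Dw=\Big(1-\tfrac{\gamma}{2}\Big)\,u^{-\gamma/2}\,Du-\tfrac12\,u^{\,1-\gamma/2}\ln(u)\,D\gamma,
\]
and the first term is bounded by Lemma~\ref{opt pt grad estimate} (which gives $u^{-\gamma/2}|Du|\le C$), while $t^{\,1-\gamma/2}|\ln t|$ is bounded on $(0,1]$; as $w$ is continuous and vanishes on $F(u)$, it is Lipschitz on $B_{3/4}$. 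This reduces the theorem to producing a finite positive Radon measure whose mass on small balls centred at free boundary points is bounded below by $r^{n-1}$.

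The heart of the argument is to show that $\Delta w$ is a signed Radon measure on $B_{3/4}$ whose negative part is absolutely continuous with density in $L^1$. Using $\Delta u=\delta\gamma u^{\gamma-1}$ in $\Omega(u)$, a direct computation gives there
\[
\Delta w=\frac{\gamma}{2}\Big(1-\tfrac{\gamma}{2}\Big)\,u^{\gamma/2-1}\Big(2\delta-\frac{|Du|^{2}}{u^{\gamma}}\Big)+\mathcal R,
\]
where the remainder $\mathcal R$ collects every term carrying a factor $D\gamma$, $D^{2}\gamma$, $\Delta\delta$ or $\ln u$ and satisfies $|\mathcal R|\le C\,(1+|\ln u|)$ — here \eqref{nunca derivei tanto} and again $|Du|\le Cu^{\gamma/2}$ are used. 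The sharp gradient estimate of Lemma~\ref{slightly dif pt grad est}, $|Du|^{2}\le 2\delta u^{\gamma}+c_{1}u$, yields the crucial cancellation $2\delta-|Du|^{2}u^{-\gamma}\ge -c_{1}u^{1-\gamma}$, so the leading term is $\ge -Cu^{-\gamma/2}$; combined with $|\ln t|\le t^{-\gamma/2}+C$ (uniformly for $\gamma\in[\gamma_{\star}(0,1),\gamma^{\star}(0,1)]$) this gives, near $F(u)$,
\[
\Delta w\ \ge\ -g,\qquad g:=C\big(u^{-\gamma/2}+1\big)\in L^{1}\big(B_{3/4}\big),
\]
the integrability of $u^{-\gamma/2}$ being exactly Lemma~\ref{integrability of negative power of u} (applied around each free boundary point). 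Since $w$ is Lipschitz and attains its minimum $0$ on the Lebesgue-null set $F(u)$, the contributions of the level sets $\{u=\varepsilon\}$ in the Green's identity do not spoil the inequality as $\varepsilon\to 0$, so $\Delta w\ge -g$ holds distributionally in $B_{3/4}$. A nonnegative distribution is a Radon measure, hence $\Delta w=\mu-g$ with $\mu\ge0$; moreover, since $Dw\in L^{\infty}$, the divergence theorem gives $\int_{B_{\rho}}\Delta w=\int_{\partial B_{\rho}}\partial_{\nu}w\le\|Dw\|_{\infty}|\partial B_{\rho}|$ for a.e.\ $\rho<3/4$, whence $\mu(B_{3/4})\le C<\infty$.

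It remains to bound $\mu(B_{r}(x_{0}))$ below for $x_{0}\in F(u)\cap B_{1/2}$. For small $\rho$, Theorem~\ref{sharp nondeg} provides $y_{\rho}\in\partial B_{\rho}(x_{0})$ with $u(y_{\rho})\ge c^{\ast}\rho^{2/(2-\gamma(x_{0}))}$; raising to the power $1-\gamma(y_{\rho})/2$ and using \eqref{CMC} to absorb $\gamma(y_{\rho})-\gamma(x_{0})$ upgrades this to $w(y_{\rho})\ge c_{2}\rho$. As $w$ is Lipschitz with constant $L$ and $w(x_{0})=0$, we get $w\ge c_{2}\rho/2$ on $B_{\kappa\rho}(y_{\rho})$ with $\kappa=c_{2}/(2L)\le\tfrac12$; intersecting with $\partial B_{\rho}(x_{0})$ this is a spherical cap of relative measure $\gtrsim\kappa^{n-1}$, so the spherical average
\[
\phi(\rho):=\frac{1}{|\partial B_{\rho}|}\int_{\partial B_{\rho}(x_{0})}w\,d\mathcal H^{n-1}\ \ge\ c_{3}\,\rho,\qquad 0<\rho<r_{1}.
\]
Since $\phi$ is Lipschitz, $\phi(0)=w(x_{0})=0$, and $\phi'(\rho)=|\partial B_{1}|^{-1}\rho^{1-n}\int_{B_{\rho}(x_{0})}\Delta w\le|\partial B_{1}|^{-1}\rho^{1-n}\mu(B_{\rho}(x_{0}))$ for a.e.\ $\rho$, the bound $\phi(\rho)\ge c_{3}\rho$ forces a sequence $\rho_{k}\downarrow 0$ with $\mu\big(B_{\rho_{k}}(x_{0})\big)\ge c_{4}\,\rho_{k}^{n-1}$. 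A Vitali covering of $F(u)\cap B_{1/2}$ by such balls then yields a disjoint subfamily $\{B_{\rho_{i}}(x_{i})\}$ with $\{B_{5\rho_{i}}(x_{i})\}$ still covering $F(u)\cap B_{1/2}$, and disjointness together with $\mu(B_{3/4})<\infty$ gives $c_{4}\sum_{i}\rho_{i}^{n-1}\le\mu(B_{3/4})$; letting the mesh tend to $0$ we conclude $\mathcal H^{n-1}\big(F(u)\cap B_{1/2}\big)\le C(n)\,c_{4}^{-1}\,\mu(B_{3/4})<\infty$.

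I expect the main difficulty to be the middle step: verifying that $w=u^{1-\gamma/2}$ is exactly the critical power for which $\Delta w$ is a signed Radon measure with $L^{1}$ negative part. Two points require care. First, producing the cancellation $2\delta-|Du|^{2}u^{-\gamma}=O(u^{1-\gamma})$, for which the sharp gradient bound of Lemma~\ref{slightly dif pt grad est} is essential — the a priori bound $|Du|^{2}\le Cu^{\gamma}$ alone would only give $\Delta w\ge -Cu^{\gamma/2-1}$, and $u^{\gamma/2-1}\notin L^{1}$ near $F(u)$. Second, the passage from the pointwise inequality in $\Omega(u)$ to a distributional one in $B_{3/4}$, i.e.\ controlling the flux of $w$ through the level sets $\{u=\varepsilon\}$ as $\varepsilon\to 0$, which is where the Lipschitz character of $w$ and the minimality structure enter. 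The hypothesis $\gamma^{\star}(0,1)<1$ (\eqref{gamma cima menor q 1}) is used through Lemmas~\ref{slightly dif pt grad est} and~\ref{integrability of negative power of u}, and \eqref{CMC} is needed both in the non-degeneracy step and in those lemmas.
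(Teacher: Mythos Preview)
Your approach is correct and closely parallels the paper's: both hinge on the same auxiliary function $w=u^{1-\gamma(x)/2}$ (the paper calls it $v$) and on the same two lemmas --- the sharp gradient bound Lemma~\ref{slightly dif pt grad est} to produce the cancellation $2\delta-|Du|^2u^{-\gamma}\ge -c_1u^{1-\gamma}$, and the integrability Lemma~\ref{integrability of negative power of u} to put the resulting negative part $C(1+u^{-\gamma/2})$ in $L^1$. The difference is only in how the $\mathcal H^{n-1}$ bound is extracted. The paper works with level sets: from the identity $\int_{\{0<w\le\epsilon\}}(|Dw|^2+w\Delta w)=-\epsilon\int_{\{w>\epsilon\}}\Delta w+\int_{\partial B}\min(w,\epsilon)\partial_\nu w$ it bounds the left side below by $c\,|\{0<w\le\epsilon\}|-C\epsilon$ via Lemma~\ref{slightly dif pt grad est} and the right side above by $C\epsilon$ via Lemma~\ref{integrability of negative power of u}, yielding $|\{0<w\le\epsilon\}|\le C\epsilon$; this is then combined with positive density (Theorem~\ref{pos_density}) and a finite-overlap cover to conclude. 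You instead interpret $\Delta w+g$ as a nonnegative finite Radon measure $\mu$, use non-degeneracy plus the spherical-mean formula $\phi'(\rho)=|\partial B_1|^{-1}\rho^{1-n}\int_{B_\rho}\Delta w$ to find radii with $\mu(B_\rho(x_0))\gtrsim\rho^{n-1}$, and conclude by Vitali. Your route is more potential-theoretic and makes the geometric mechanism transparent; the paper's level-set route has the advantage of never needing to justify the distributional inequality $\Delta w\ge -g$ across $F(u)$ (precisely the step you flag as delicate), since the identity is derived directly by integration by parts against $\min(w,\epsilon)$.
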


\begin{proof}
Assume $0 \in F(u)$. It is enough to prove that for small $r$,
$$
\mathcal{H}^{n-1}(F(u) \cap B_{r}) \leq Cr^{n-1}.
$$
Given a small parameter $\epsilon>0$, we cover $F(u) \cap B_r$ with finitely many balls $\{B_\epsilon(x_i) \}_{i \in F_\epsilon}$ with finite overlap, that is,
$$
    \sum_{i \in F_\epsilon} \chi_{B_\epsilon(x_i)} \leq c,
$$
for a constant $c>0$ that depends only on the dimension. It then follows that
\begin{equation}\nonumber
    \mathcal{H}^{n-1}(F(u)\cap B_r) \leq \overline{c} \liminf_{\epsilon \rightarrow 0} \epsilon^{n-1} \#(F_\epsilon).
\end{equation}
Since $x_i \in F(u)$, by Theorem \ref{thm-beicodebode}, we have
$$
    \Omega(u) \cap B_\epsilon(x_i) \subset \left\{0 < u \leq \overline{M}\epsilon^{\beta_i} \right\} \cap B_\epsilon(x_i),
$$
where $\beta_i = 2/(2-\gamma(x_i))$. By Assumption \eqref{CMC}, it follows that
$$
    \Omega(u) \cap B_\epsilon(x_i) \subset \left\{0 < u \leq \overline{M}_1\epsilon^{\beta^\ast(x_i,\epsilon)} \right\} \cap B_\epsilon(x_i),
$$
for a universal constant $\overline{M}_1 > \overline{M}$, with
$$
	\beta^\ast(x_i,\epsilon) \coloneqq \frac{2}{2-\gamma^\ast(x_i,\epsilon)}.
$$
Up to replacing $u$ by $u/\overline{M}_1$, we may assume $\overline{M}_1 = 1$. Now, observe that
$$
    \bigcup_{i 
    \in F_\epsilon} \left(B_\epsilon(x_i) \cap \left\{ 0 < u(x) \leq \epsilon^{\beta^\ast(x_i,\epsilon)} \right\} \right) \subseteq B_{2r} \cap \left\{0 < u(x)^{\frac{1}{\beta(x)}} < \epsilon \right\},
$$
with
$$
	\beta(x) \coloneqq \frac{2}{2-\gamma(x)}.
$$
Since the covering $\{B_\epsilon(x_i) \}_{i \in F_\epsilon}$ has finite overlap, it then follows that
$$
    \sum_{i \in F_\epsilon} \left| \Omega(u) \cap B_\epsilon(x_i) \right| \leq c \left| B_{2r} \cap \left\{0 < u(x)^{\frac{1}{\beta(x)}} < \epsilon \right\}\right|.
$$
From Theorem \ref{pos_density},
$$
    |\Omega(u) \cap B_\epsilon(x_i)| \geq \mu_0 \epsilon^n,
$$
and so
$$
     \epsilon^{n-1} \#(F_\epsilon) \leq \frac{c}{\mu_0} \frac{\left|B_{2r} \cap \left\{0 < u(x)^{\frac{1}{\beta(x)}} < \epsilon \right\}\right|}{\epsilon},
$$
which readily leads to
$$
    \mathcal{H}^{n-1}(F(u) \cap B_r) \leq \frac{\overline{c}\, c}{\mu_0} \liminf_{\epsilon \rightarrow 0} \frac{\left|B_{2r} \cap \left\{0 < u(x)^{\frac{1}{\beta(x)}} < \epsilon \right\}\right|}{\epsilon}.
$$
We will show below that the right-hand side of the inequality above can be bounded above uniformly in $\epsilon$. To do so, let 
$$
    v(x) \coloneqq u(x)^{\frac{1}{\beta(x)}}.
$$
Observe that
$$
    \int\limits_{B_{2r} \cap \{0 < v \leq \epsilon \}} |Dv|^2\,dx  = \int\limits_{B_{2r}} D(\min(v,\epsilon)) \cdot Dv\,dx \eqqcolon I.
$$
Integrating by parts, we get
$$
    I = - \int\limits_{B_{2r}} \min(v,\epsilon) \Delta v\,dx + \int\limits_{\partial B_{2r}} \min(v,\epsilon) \partial_\nu v\,d\mathcal{H}^{n-1} ,
$$
and so,
\begin{equation}\label{edu tomando uma domingo e nois derivando}
     \int\limits_{V_\epsilon} \left(|Dv|^2 + v\Delta v\right)\,dx = - \epsilon \int\limits_{B_{2r} \cap \{v > \epsilon \}}\Delta v\,dx + \int\limits_{\partial B_{2r}} \min(v,\epsilon) \partial_\nu v\,d\mathcal{H}^{n-1},
\end{equation}
where $V_\epsilon \coloneqq B_{2r} \cap \{v \leq \epsilon \}$. Let us bound the left-hand side of \eqref{edu tomando uma domingo e nois derivando} from below. By direct computations, we readily obtain
$$
    Dv(x)  =   g(x)D \left(\frac{1}{\beta(x)} \right) + \frac{1}{\beta(x)}u(x)^{\frac{1}{\beta(x)} - 1}Du(x)
$$
and
$$
    \Delta v(x)  =  A(x) + B(x) + \frac{\delta(x)\, \gamma(x)}{\beta(x)} u(x)^{-\frac{1}{\beta(x)}},
$$
where $g(x) = v(x) \ln(u(x))$, with 
\begin{eqnarray*}
A(x) \coloneqq g(x)\,\Delta\!\left(\tfrac{1}{\beta(x)}\right)
+ D\!\left(\tfrac{1}{\beta(x)}\right)\!\cdot\Big(Dg(x)+u(x)^{\frac{1}{\beta(x)}-1}Du(x)\Big).
\end{eqnarray*}
and
$$
    B(x) \coloneqq \frac{1}{\beta(x)} D\left(u^{\frac{1}{\beta(x)}-1}\right) \cdot Du(x).
$$
Now we estimate
\begin{eqnarray*}
|Dv|^2 + v \Delta v & = & \displaystyle \underbrace{g(x)^2\left|D\left(\frac{1}{\beta(x)}\right)\right|^2}_{\geq 0} + \frac{1}{\beta(x)^2}u(x)^{2\left(\frac{1}{\beta(x)}-1\right)}|Du|^2 \\
         & & +    2\frac{1}{\beta(x)}g(x)u(x)^{\frac{1}{\beta(x)} - 1}D\left(\frac{1}{\beta(x)}\right) \cdot Du(x)\\
         & & +  (A(x) + B(x))u(x)^{\frac{1}{\beta(x)}} + \frac{\delta(x)\, \gamma(x)}{\beta(x)}\\
         & \geq & \underbrace{\frac{1}{\beta(x)^2}u(x)^{2\left(\frac{1}{\beta(x)}-1\right)}|Du|^2 +  B(x) u(x)^{\frac{1}{\beta(x)}}}_{\mathcal{I}}\\
         & & +   2\frac{1}{\beta(x)}g(x)u(x)^{\frac{1}{\beta(x)} - 1}D\left(\frac{1}{\beta(x)}\right) \cdot Du(x)\\
         & & +   A(x)u(x)^{\frac{1}{\beta(x)}} + \frac{\delta_0 \, \gamma_\ast (0,1)}{2}.
\end{eqnarray*}
The worst term in the expression above is $B(x) u(x)^{\frac{1}{\beta(x)}}$, which is of order $u^{-1}$. To handle it, we make use of the following cancellation on the term $\mathcal{I}$:
\begin{eqnarray*}
	\mathcal{I} & = & \frac{1}{\beta(x)^2}u(x)^{2\left(\frac{1}{\beta(x)}-1\right)}|Du(x)|^2 + \frac{1}{\beta(x)}\left(\frac{1}{\beta(x)}-1 \right)u(x)^{\frac{2}{\beta(x)}-2}|Du(x)|^2\\
	& & - \frac{1}{2\beta(x)}u(x)^{\frac{2}{\beta(x)}-1}\ln(u) \left(D\gamma(x)\cdot Du(x)\right)\\
	& = & \frac{1}{\beta(x)}u(x)^{2\left(\frac{1}{\beta(x)}-1\right)}|Du(x)|^2\left(\frac{2}{\beta(x)} - 1 \right)\\
	& & - \frac{1}{2\beta(x)}u(x)^{\frac{2}{\beta(x)}-1}\ln(u) \left(D\gamma(x)\cdot Du(x)\right).
\end{eqnarray*}
Taking into account that 
$$
	\frac{2}{\beta(x)} - 1  = 1 - \gamma(x) \geq 0,
$$
we obtain
\begin{align*}
   \mathcal{I} & \geq -|D\gamma(x)|u(x)^{\frac{2}{\beta(x)}-1}|Du(x)||\ln(u(x))|\\
               & \geq - C |D\gamma(x)|u(x)^{\frac{2}{\beta(x)}-1},               
\end{align*}
where we used Lemma \ref{opt pt grad estimate} to bound the term $|Du(x)||\ln(u(x))|$. Therefore,
$$
	\mathcal{I} \geq -Cu(x)^{1-\gamma(x)}|D\gamma(x)|,
$$
where we have used Lemma \ref{opt pt grad estimate}. Putting everything together, we get
\begin{align*}
         |Dv|^2 + v \Delta v & \geq  \displaystyle  2\frac{1}{\beta(x)}g(x)u(x)^{\frac{1}{\beta(x)} - 1}D\left(\frac{1}{\beta(x)}\right) \cdot Du(x)\\
         & \quad - Cu^{1-\gamma(x)}|D\gamma(x)|  + A(x)u(x)^{\frac{1}{\beta(x)}} + \frac{\delta_0 \, \gamma_\ast (0,1)}{2},
\end{align*}
for some universal constant $C$. Using Lemma \ref{opt pt grad estimate} once more, we obtain
\begin{align*}
    \left|2 \frac{1}{\beta(x)} g(x)u(x)^{\frac{1}{\beta(x)} - 1} D\left(\frac{1}{\beta(x)} \right) \cdot Du(x)  \right|& \leq  C_1 u(x)^{1-\frac{3}{4}\gamma(x)} |D\gamma(x)|\\
    & \leq C_1 u(x)^{\frac{1}{2\beta(x)}}|D\gamma(x)|,
\end{align*}
and
\begin{align*}
	|A(x)| & \leq  |D^2\gamma(x)|u(x)^{\frac{1}{\beta(x)}}|\ln(u(x))| + |D\gamma(x)|^2u(x)^{\frac{1}{\beta(x)}}(\ln(u(x)))^2\\
	&  \quad + C_1|\ln(u(x))| + 2C_1 |D\gamma(x)|, 
\end{align*}
for some universal constant $C_1$, and so
\begin{eqnarray*}
         |Dv|^2 + v \Delta v & \geq & \displaystyle - C_2u^{\frac{1}{2\beta(x)}}\left(|D\gamma(x)| + |D\gamma(x)|^2 + |D^2\gamma(x)| + 1 \right)\\
         & &  + \frac{\delta_0 \, \gamma_\ast (0,1)}{2},
\end{eqnarray*}
for a larger constant $C_2$. Recalling that we are within the set $V_\epsilon$, we have $u(x)^{\frac{1}{2\beta(x)}} \leq \epsilon^{1/2}$, and so, for $\epsilon$ small enough depending further on $C_2$, $\|D\gamma\|_\infty$ and $\|D^2\gamma\|_\infty$, there holds
$$
	|Dv|^2 + v \Delta v \geq \frac{\delta_0 \, \gamma_\ast (0,1)}{4}.
$$
We can now estimate the left-hand side of \eqref{edu tomando uma domingo e nois derivando} as
\begin{eqnarray*}
         \int\limits_{B_{2r} \cap \{0 < v \leq \epsilon \} } \left(|Dv|^2 + v\Delta v\right)\,dx  & \geq &  \frac{\delta_0 \, \gamma_\ast (0,1)}{4}|B_{2r} \cap \{0 < v \leq \epsilon \}|.
\end{eqnarray*}
By Lemma \ref{opt pt grad estimate}, there exists a constant $C_3>0$ such that $|Dv| \leq C_3$, and so \eqref{edu tomando uma domingo e nois derivando} implies
$$
    \frac{\delta_0 \, \gamma_\ast (0,1)}{4}|B_{2r} \cap \{0 < v \leq \epsilon \}| \leq - \epsilon \int\limits_{B_{2r} \cap \{v > \epsilon \}}\Delta v\,dx + C_3 \epsilon r^{n-1},
$$
and so
$$
    \frac{\delta_0 \, \gamma_\ast (0,1)}{4}\frac{|B_{2r} \cap \{0 < v \leq \epsilon \}|}{\epsilon} \leq C_3r^{n-1} - \int\limits_{B_{2r} \cap \{v > \epsilon \}}\Delta v\,dx.
$$
The proof will then be complete provided this remaining integral is uniformly bounded in $\epsilon>0$. Recalling the estimate for $|A(x)|$, we have
$$
	|A(x)| \leq C_4|\ln(u(x))|,
$$
and 
$$
	-B(x) \leq C_4|\ln(u(x))| - \frac{1}{\beta(x)}\left(\frac{1}{\beta(x)} -1\right)u(x)^{\frac{1}{\beta(x)}-2}|Du(x)|^2,
$$
we have
\begin{align*}
        -\Delta v &  =   - A(x) - B(x) - \frac{\delta(x)\, \gamma(x)}{\beta(x)} u(x)^{-\frac{1}{\beta(x)}} \\
         & \leq   2C_4|\ln(u(x))| - \frac{1}{\beta(x)}\left(\frac{1}{\beta(x)} -1\right)u(x)^{\frac{1}{\beta(x)}-2}|Du(x)|^2 
         \\
         &  \quad - \frac{\delta(x)\, \gamma(x)}{\beta(x)} u(x)^{-\frac{1}{\beta(x)}}\\
         & \leq  2C_4|\ln(u(x))| - \frac{\delta(x)\, \gamma(x)}{\beta(x)} u(x)^{-\frac{1}{\beta(x)}}\\
         & \quad - \frac{1}{\beta(x)}\left(\frac{1}{\beta(x)} -1\right)u(x)^{\frac{1}{\beta(x)}-2}\left(2\delta(x)u(x)^{\gamma(x)} + c_1 u(x)\right)\\
         & =  2C_4|\ln(u(x))| + c_1u(x)^{\frac{1}{\beta(x)}-1} \leq C_5 u(x)^{-\frac{\gamma(x)}{2}},
\end{align*}
where we used Lemma \ref{slightly dif pt grad est} and the fact that $|\ln(u(x))|$ can be bounded above by $u(x)^{-\frac{\gamma(x)}{2}}$. This implies that
$$
  -\int\limits_{B_{2r} \cap \{v > \epsilon \}}\Delta v\,dx \leq C_5\int\limits_{B_{2r} \cap \{v > \epsilon \}}u(x)^{-\frac{\gamma(x)}{2}}\,dx. 
$$
Recalling the proof of Lemma \ref{integrability of negative power of u}, we have
$$
	\int\limits_{B_{2r} \cap \{v > \epsilon \}}u(x)^{-\frac{\gamma(x)}{2}}\,dx \lesssim r^{n-1},
$$
from which the conclusion of the theorem follows.
\end{proof}

\section{Monotonicity formula and classification of blow-ups}\label{sct monotonicity}

In this section, we derive a monotonicity formula valid for local minimizers of the energy-functional \eqref{fifi}, and we use it to classify blow-ups as homogeneous functions. We begin with the following definition.

\begin{definition}[Blow-up]\label{blowup definition}
Given a point $z_0 \in F(u)$, we say that $u_0$ is a blow-up of $u$ at $z_0$ if the family $\{u_r \}_{r>0}$, defined by 
$$
    u_r(x) \coloneqq \frac{u(z_0 + rx)}{r^{\beta(z_0)}}, \quad \text{with} \quad \beta(z_0) \coloneqq \frac{2}{2-\gamma(z_0)},
$$
converges, along a subsequence, to $u_0$, when $r\to 0$.

We say $u_0$ is $\beta(z_0)$-homogeneous if
$$
	u_0(\lambda x) = \lambda^{\beta(z_0)}u_0(x), \quad \forall \lambda > 0,  \ \forall x \in \mathbb{R}^n.
$$
\end{definition}

The construction of this new monotonicity formula is based on the behavior of the functional \eqref{fifi} under functions that are already homogeneous.

\begin{lemma}\label{bhv with homog functions}
Let $z_0 \in \mathbb{R}^n$ and $v \in C^{0,1}(B_1)$ be a $\beta(z_0)$-homogeneous function. Define
$$
	\beta_0 \coloneqq \beta(z_0) \quad \text{and} \quad \gamma_0 \coloneqq \gamma(z_0).
$$
For $r>0$ such that $B_r(z_0) \subset B_1$, define the quantity $\mathcal{H}_{v,z_0}(r)$ by

\begin{eqnarray}
\displaystyle  &   & \displaystyle r^{-(n + 2(\beta_0-1))}\mathcal{J}_\delta^\gamma(v,B_r(z_0)) - \frac{1}{2}\beta_0 r^{-((n-1) + 2\beta_0)}\int\limits_{\partial B_r(z_0)}v^2\, d\mathcal{H}^{n-1} \nonumber\\
&  & - \int_{0}^{r} \beta_0 t^{-(n+\beta_0 \gamma_0 +1)}\left(\int\limits_{B_t(z_0)}(\gamma(x)-\gamma_0)\delta(x)v^{\gamma(x)} \, dx\right)\, dt  \nonumber \\
&  & - \int_{0}^{r} t^{-(n+\beta_0 \gamma_0 + 1)}\left(\int\limits_{B_t(z_0)}\left(D\gamma(x)\cdot (x-z_0)\right) \delta(x) v^{\gamma(x)}  \ln(v) \, dx\right)\, dt \nonumber \\
&  & -  \int_{0}^r t^{-(n+\beta_0 \gamma_0 + 1)} \left(\int\limits_{B_t(z_0)} (D\delta(x) \cdot (x - z_0)) v^{\gamma(x)}  \, dx\right)\, dt.\label{mon formula}
\end{eqnarray}
Then, it follows that 
$$
	\frac{d}{dr} \mathcal{H}_{v,z_0}(r) = 0.
$$
\end{lemma} 

\begin{proof}
Without loss of generality, we may assume $z_0 = 0$. Define
$$
  \overline{\mathcal{H}}(r) \coloneqq   r^{-(n + 2(\beta_0-1))}\mathcal{J}_\delta^\gamma(v,B_r) - \frac{1}{2}\beta_0 r^{-((n-1) + 2\beta_0)}\int\limits_{\partial B_r}v^2\, d\mathcal{H}^{n-1}.
$$
Since $v$ is $\beta_0$-homogeneous, changing variables allows us to write
$$
	\overline{\mathcal{H}}(r) = \int_{B_1} \frac{1}{2}|Dv|^2 + \delta(rx)r^{\beta_0(\gamma(rx)-\gamma_0)}v(x)^{\gamma(rx)}\,dx -\frac{1}{2}\beta_0 \int_{\partial B_1}v^2\,d\mathcal{H}^{n-1},
$$
and so
$$
	\frac{d}{dr}\overline{\mathcal{H}}(r) = \int_{B_1} \frac{d}{dr}\left(\delta(rx)r^{\beta_0(\gamma(rx)-\gamma_0)}v(x)^{\gamma(rx)} \right)dx.
$$
Direct computations now give
\begin{eqnarray*}
	\frac{d}{dr}\overline{\mathcal{H}}(r) & = & \int_{B_1} \left(D\delta(rx)\cdot x \right)r^{\beta_0(\gamma(rx)-\gamma_0)}v(x)^{\gamma(rx)}\,dx\\
	& & + \int_{B_1}\delta(rx)\left(\beta_0(\gamma(rx)-\gamma_0)r^{\beta_0(\gamma(rx)-\gamma_0)-1}v^{\gamma(rx)} \right)\,dx\\
	& & + \underbrace{\int_{B_1}\delta(rx)\left(\beta_0(D\gamma(rx) \cdot x)r^{\beta_0(\gamma(rx)-\gamma_0)}\ln(r)v^{\gamma(rx)} \right)\,dx}_{I}\\
	& & + \underbrace{\int_{B_1}\delta(rx)\left((D\gamma(rx) \cdot x)r^{\beta_0(\gamma(rx)-\gamma_0)}\ln(v)v^{\gamma(rx)} \right)\,dx}_{II}.
\end{eqnarray*}
By the $\beta_0$-homogeneity of $v$, the last two terms can be summed, which gives
\begin{eqnarray*}
	I + II =  \int_{B_1}\delta(rx)\left((D\gamma(rx) \cdot x)r^{\beta_0(\gamma(rx)-\gamma_0)}\ln(v(rx))v^{\gamma(rx)} \right)\,dx.
\end{eqnarray*}
Changing variables back and using again the homogeneity of $v$, we obtain
\begin{eqnarray*}
	\frac{d}{dr}\overline{\mathcal{H}}(r) & = & r^{-n-\beta_0 \gamma_0-1}\int_{B_r} \left(D\delta(x)\cdot x \right)v(x)^{\gamma(x)}\,dx\\
	& & + \beta_0r^{-n-\beta_0\gamma_0-1}\int_{B_r}\delta(x)(\gamma(x)-\gamma_0)v^{\gamma(x)}\,dx\\
	& & + r^{-n-\beta_0\gamma_0-1}\int_{B_r}\delta(x)\left(D\gamma(x) \cdot x\right)\ln(v(x))v^{\gamma(x)}\,dx,
\end{eqnarray*}
from which follows that $\frac{d}{dr}\mathcal{H}_{v,0}$ is zero.
\end{proof}

We require further regularity assumptions on both $\delta$ and $\gamma$ to ensure the quantity $\mathcal{H}_{v,z_0}(r)$ is finite. To that end, we need, for some $r_0 \in (0,1/2)$, that
\begin{equation}\label{integra delta}
    t \mapsto t^{-n}\int_{B_t(z_0)}|D\delta(x)|\, dx \in L^1(0,r_0),
\end{equation}
and
\begin{equation}\label{integra gamma}
    t \mapsto t^{-n}|\ln t| \int_{B_t(z_0)}|D\gamma(x)|\, dx \in L^1(0,r_0).
\end{equation}
We remark that sufficient conditions for these to hold are $|D\delta| \in L^{q}(B_1)$ and $|D\gamma| \in L^{q}(B_1)$, for $q> n$. Indeed, we readily have
$$
    t^{-n}|\ln t|\int_{B_t(z_0)}|D\gamma(x)|\, dx \leq C(n,q) \|D\gamma\|_{L^q \left( B_{r_0}(z_0) \right)} \, t^{-\frac{n}{q}}|\ln t|,
$$
and
$$
    \int_0^{r_0} t^{-\frac{n}{q}}|\ln t|\, dt < \infty \quad \Longleftarrow \quad q>n.
$$

\begin{remark}
If we assume $\gamma \in W^{1,q}$, for $q>n$, then $\gamma$ is H\"older continuous and therefore condition \eqref{CMC} is automatically satisfied. We also point out that these integrability conditions are important to ensure that $\mathcal{H}_{u,z_0}(r) < \infty$, for every $r>0$, and $z_0 \in F(u)$ such that $B_r(z_0) \Subset B_1$, for $u$ a local minimizer of \eqref{fifi}.
\end{remark}

Following the strategy of the proof of the monotonicity formula in \cite{DSS2}, we apply Lemma \ref{bhv with homog functions} for a specific $\beta_0$-homogeneous function (namely the $\beta_0$-homogeneous extension of $u$) and use it as a competitor to \eqref{fifi} to prove that a version of the formula \eqref{mon formula} is monotone in $r$.

\begin{theorem}\label{monotonicity formula}
Let $u$ be a local minimizer of \eqref{fifi} and assume \eqref{integra delta} and \eqref{integra gamma} are in force. For $z_0 \in F(u)\cap B_{1/2}$, define
$$
	\beta_0 \coloneqq \beta(z_0) \quad \text{and} \quad \gamma_0 \coloneqq \gamma(z_0),
$$
and for $r \in (0,r_0)$, consider the function $\mathcal{W}_{u,z_0}(r)$ defined by
\begin{eqnarray*}
  &   &  r^{-(n + 2(\beta_0-1))}\mathcal{J}_\delta^\gamma(u,B_r(z_0)) - \frac{1}{2}\beta_0 r^{-((n-1) + 2\beta_0)}\int\limits_{\partial B_r(z_0)}u^2\, d\mathcal{H}^{n-1}\\
&  & - \int_{0}^{r} \beta_0 t^{-(n+\beta_0 \gamma_0 +1)}\left(\int\limits_{B_t(z_0)}(\gamma(x)-\gamma_0)\delta(x)w^{\gamma(x)} \, dx\right)\, dt \\
&  & - \int_{0}^{r} t^{-(n+\beta_0 \gamma_0 + 1)}\left(\int\limits_{B_t(z_0)}\left(D\gamma(x)\cdot (x-z_0)\right) \delta(x) w^{\gamma(x)}  \ln(w) \, dx\right)\, dt\\
&  & -  \int_{0}^r t^{-(n+\beta_0 \gamma_0 + 1)} \left(\int\limits_{B_t(z_0)} (D\delta(x) \cdot (x - z_0)) w^{\gamma(x)}  \, dx\right)\,dt,
\end{eqnarray*}
where
$$
    w(x,t) \coloneqq \left(\frac{|x-z_0|}{t}\right)^{\beta_0} u\!\left(z_0 + t\frac{x-z_0}{|x-z_0|}\right),\quad x\ne z_0,\qquad w(z_0,t)=0.
$$
satisfies
$$
    \frac{d}{dr}\mathcal{W}_{u,z_0}(r) \geq 0.
$$
\end{theorem}

\begin{proof}
Without loss of generality, we may assume $z_0 = 0$. Define
$$
  \overline{\mathcal{H}}(r) \coloneqq   r^{-(n + 2(\beta_0-1))}\mathcal{J}_\delta^\gamma(u,B_r) - \frac{1}{2}\beta_0 r^{-((n-1) + 2\beta_0)}\int\limits_{\partial B_r}u^2\, d\mathcal{H}^{n-1}.
$$
By direct computations, we can write $\frac{d}{dr}\overline{\mathcal{H}}(r)$ as
\begin{eqnarray*}
	  &   -(n + 2(\beta_0-1))r^{-(n + 2\beta_0-1)}\mathcal{J}_\delta^\gamma(u,B_r) + r^{-(n + 2(\beta_0-1))}\mathcal{J}_\delta^\gamma(u,\partial B_r)\\
	 & \displaystyle - \beta_0r^{-(n+2\beta_0)}\int_{\partial B_r}(ru\partial_\nu u - \beta_0 u^2)d\mathcal{H}^{n-1}.
\end{eqnarray*}
Here, we are abusing notation and writing
$$
  \mathcal{J}_\delta^\gamma(u,\partial B_r) = \int_{\partial B_r}\frac{1}{2}|Du|^2 + \delta(x)u^{\gamma(x)}\,d\mathcal{H}^{n-1}.  
$$
For points in $\partial B_r$, we decompose $|Du(x)|^2$ into the tangential and normal components, 
$$
	Du(x) = \underbrace{(Du(x) \cdot \nu)}_{\partial_\nu u(x)} \nu + \underbrace{(Du(x) - (Du(x) \cdot \nu) \nu)}_{D_\tau u(x)},
$$
which gives the following expression for $\frac{d}{dr}\overline{\mathcal{H}}(r)$,
\begin{align*}
	  &  r^{-(n + 2(\beta_0-1))}\int_{\partial B_r}\left(\frac{1}{2}|D_\tau u|^2 + \delta(x)u^{\gamma(x)}\right)\\
	  & -(n + 2(\beta_0-1))r^{-(n + 2\beta_0-1)}\mathcal{J}_\delta^\gamma(u,B_r) + r^{-(n + 2(\beta_0-1))}\int_{\partial B_r}\frac{1}{2}\left(\partial_\nu u\right)^2\\
	  & - \beta_0r^{-(n+2\beta_0)}\int_{\partial B_r}(ru\partial_\nu u - \beta_0 u^2),
\end{align*}
where we intentionally omitted the $d\mathcal{H}^{n-1}$ from the surface integrals to ease notation. We can put together the last two integrals in the sphere to obtain the following expression for $\frac{d}{dr}\overline{\mathcal{H}}(r)$:
\begin{eqnarray*}
	  \displaystyle  \overbrace{r^{-(n + 2(\beta_0-1))}\int_{\partial B_r}\left(\frac{1}{2}|D_\tau u|^2 + \delta(x)u^{\gamma(x)}\right) + \frac{1}{2}\beta_0^2r^{-(n+2\beta_0)}\int_{\partial B_r}u^2}^{\mathcal{B}(u)}\\
	  \displaystyle    -(n + 2(\beta_0-1))r^{-(n + 2\beta_0-1)}\mathcal{J}_\delta^\gamma(u,B_r)\\
	  \displaystyle + \frac{1}{2} r^{-(n+2(\beta_0-1))}\int_{\partial B_r}(\partial_\nu u - \beta_0r^{-1} u)^2.
\end{eqnarray*}
Recalling the definition of $\mathcal{W}_{u,0}(r)$, we have
\begin{eqnarray*}
  \frac{d}{dr}\mathcal{W}_{u,0}(r)& =  & \mathcal{B}(u) -(n + 2(\beta_0-1))r^{-(n + 2\beta_0-1)}\mathcal{J}_\delta^\gamma(u,B_r)\\
  & &  - \beta_0 r^{-(n+\beta_0 \gamma_0 +1)}\int\limits_{B_r}(\gamma(x)-\gamma_0)\delta(x)w(x,r)^{\gamma(x)} \,dx\\
&  & - r^{-(n+\beta_0 \gamma_0 + 1)}\int\limits_{B_r}\left(D\gamma(x)\cdot x\right) \delta(x) w(x,r)^{\gamma(x)}  \ln(w(x,r)) \, dx\\
&  & -  r^{-(n+\beta_0 \gamma_0 + 1)}\int\limits_{B_r} (D\delta(x) \cdot x) w(x,r)^{\gamma(x)}  \, dx\\
& & + \frac{1}{2} r^{-(n+2(\beta_0-1))}\int_{\partial B_r}(\partial_\nu u - \beta_0r^{-1} u)^2.
\end{eqnarray*}
We now show that 
\begin{eqnarray}
  \mathcal{B}(u) & \geq &  (n + 2(\beta_0-1))r^{-(n + 2\beta_0-1)}\mathcal{J}_\delta^\gamma(u,B_r) \nonumber\\ 
  & &  + \beta_0 r^{-(n+\beta_0 \gamma_0 +1)}\int\limits_{B_r}(\gamma(x)-\gamma_0)\delta(x)w(x,r)^{\gamma(x)} \,dx \nonumber\\ 
&  & + r^{-(n+\beta_0 \gamma_0 + 1)}\int\limits_{B_r}\left(D\gamma(x)\cdot x\right) \delta(x) w(x,r)^{\gamma(x)}  \ln(w(x,r)) \, dx \nonumber\\ 
&  & +  r^{-(n+\beta_0 \gamma_0 + 1)}\int\limits_{B_r} (D\delta(x) \cdot x) w(x,r)^{\gamma(x)}  \, dx. \label{main ineq for monotonicity}
\end{eqnarray}
To do so, notice that
$$
	v(x) \coloneqq w(x,r) = \left(\frac{|x|}{r}\right)^{\beta_0}u\left(r\frac{x}{|x|}\right),
$$
is the $\beta_0$-homogeneous extension of $u$ to $\partial B_r$. Since $u = v$ on $\partial B_r$, we have $\mathcal{B}(u) = \mathcal{B}(v)$. Moreover, since $v$ is $\beta_0$-homogeneous, it follows by Lemma \ref{bhv with homog functions} that $\frac{d}{dr}\mathcal{H}_{v,0}(r) = 0$, where $\mathcal{H}_{v,0}(r)$ is the formula given by \eqref{mon formula}.  It also follows, by homogeneity of $v$, that $\mathcal{H}_{v,0}(r) = \mathcal{W}_{v,0}(r)$, and so it follows that inequality \eqref{main ineq for monotonicity} is an equality when exchanging $u$ by $v$. Take also into account that the integral
$$
	\int_{\partial B_r}(\partial_\nu v - \beta_0 r^{-1}v)^2\,d\mathcal{H}^{n-1} = 0,
$$
again by the homogeneity of $v$. Using $v$ as a competitor to the functional that $u$ minimizes, gives
$$
	\mathcal{J}_\delta^\gamma(u,B_r) \leq \mathcal{J}_\delta^\gamma(v,B_r).
$$
Putting everything together, we obtain \eqref{main ineq for monotonicity}.
\end{proof}

As a consequence of the monotonicity formula, we obtain the homogeneity of blow-ups. Unlike in the constant case $\gamma(x) \equiv \gamma_0$, the homogeneity property of blow-ups will vary depending on the free boundary point we are considering. This is the object of the following result.

\begin{corollary}\label{classification of blow ups}
Let $u$ be a local minimizer of \eqref{fifi} and assume \eqref{integra delta} and \eqref{integra gamma} are in force. If $u_0$ is a blow-up of $u$ at a point $z_0 \in F(u) \cap B_{1/2}$, then $u_0$ is $\beta(z_0)$-homogeneous.
\end{corollary}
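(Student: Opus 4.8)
The plan is to run the Weiss-type blow-up argument: combine the monotonicity of $W_{u,z_0}$ from Theorem \ref{monotonicity formula} with the sharp growth of Theorem \ref{thm-beicodebode} to show that the rescaled Weiss energy is \emph{asymptotically constant} at $z_0$, pass this constancy to the blow-up, and then read off homogeneity from the equality case of the monotonicity formula. Fix $z_0\in F(u)\cap B_{1/2}$ and set $\gamma:=\gamma(z_0)$, $\beta:=\beta(z_0)=2/(2-\gamma)$. Since \eqref{integra gamma} holds under, say, $D\gamma\in L^q(B_1)$ with $q>n$, the exponent $\gamma(\cdot)$ is Hölder continuous, so \eqref{CMC} is in force at $z_0$ and Theorem \ref{thm-beicodebode} applies, giving $u(y)\le C'|y-z_0|^{\beta}$ near $z_0$ and hence $\delta(x)u(x)^{\gamma(x)}\chi_{\{u>0\}}\lesssim|x-z_0|^{\beta\gamma}$ there. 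Feeding this decay and the integrability hypotheses \eqref{integra delta}--\eqref{integra gamma} into the five terms of \eqref{mon formula}, one checks: the bulk term is nonnegative, the boundary term is $\ge -C$ by the growth estimate, and in each error integral the extra factor $|x-z_0|\le t$ together with the $|x-z_0|^{\beta\gamma}$ decay and Hölder's inequality in $L^q$ bound the $t$-integrand by $t^{-n/q}(1+|\ln t|)$, which is integrable near $0$ precisely because $q>n$. Thus $W_{u,z_0}$ is finite, nondecreasing, and bounded below near the origin, so
\[
W_{u,z_0}(0^+):=\lim_{r\to0^+}W_{u,z_0}(r)\in\mathbb{R}
\]
exists.

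Next I would introduce the blow-up family $u_r(x):=u(z_0+rx)/r^{\beta}$. By the scaling of Section \ref{prelim-sect}, $u_r$ minimizes on $B_{1/r}$ the functional with data $\tilde\delta_r(x):=r^{\beta(\gamma(z_0+rx)-\gamma)}\delta(z_0+rx)$ and $\tilde\gamma_r(x):=\gamma(z_0+rx)$, which again satisfy \eqref{integra delta}--\eqref{integra gamma} and take the values $\delta(z_0)$ and $\gamma$ at the origin. A change of variables in both the spatial and the radial integration variables in \eqref{mon formula} — in which the $\ln r$ produced when differentiating $r^{\beta(\gamma(z_0+rx)-\gamma)}$ inside the $D\tilde\delta_r$-term cancels the $\ln r$ coming from $\ln u_r=\ln u(z_0+r\cdot)-\beta\ln r$ in the $D\tilde\gamma_r$-term — yields the exact scale-covariance identity
\[
W_{u_r,0}(\rho)=W_{u,z_0}(r\rho),\qquad \rho>0,
\]
the left-hand side being the Weiss functional built from $\tilde\delta_r,\tilde\gamma_r$. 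By Theorem \ref{thm-beicodebode}, $\sup_{B_1}u_r\le C'$ uniformly for small $r$, and since \eqref{CMC} forces $\|\tilde\delta_r\|_{L^\infty(B_1)}$ and $\inf_{B_1}\tilde\gamma_r$ to be controlled independently of $r$, Theorem \ref{localregthm} bounds $\{u_r\}$ in $C^{1,\alpha}(B_{3/4})$ with $r$-independent constants; hence it is precompact in $C^1_{\mathrm{loc}}$. Along a subsequence $r_j\to0$ we get $u_{r_j}\to u_0$ in $C^1_{\mathrm{loc}}$, with $u_0$ a blow-up in the sense of Definition \ref{blowup definition}; moreover $\tilde\delta_{r_j}\to\delta(z_0)$, $\tilde\gamma_{r_j}\to\gamma$ locally uniformly, while $D\tilde\gamma_{r_j}=r_jD\gamma(z_0+r_j\cdot)$ and $D\tilde\delta_{r_j}$ tend to $0$ in $L^q_{\mathrm{loc}}$. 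A standard lower-semicontinuity plus recovery-sequence argument then shows that $u_0$ minimizes the \emph{constant-exponent} functional $v\mapsto\int\frac12|Dv|^2+\delta(z_0)v^{\gamma}\chi_{\{v>0\}}$ on every ball.

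Now I would pass to the limit in $W_{u_{r_j},0}(\rho)=W_{u,z_0}(r_j\rho)$ for fixed $\rho>0$. The right-hand side tends to $W_{u,z_0}(0^+)$. On the left, the bulk and boundary terms converge (by $C^1_{\mathrm{loc}}$ convergence and the convergence of the coefficients) to the corresponding terms of the classical, constant-exponent Weiss energy of $u_0$, namely
\[
W^{0}_{u_0}(\rho):=\rho^{-(n+\beta\gamma)}\!\!\int_{B_\rho}\!\tfrac12|Du_0|^2+\delta(z_0)u_0^{\gamma}\chi_{\{u_0>0\}}\;-\;\beta\rho^{-(n-1+2\beta)}\!\!\int_{\partial B_\rho}\!u_0^2,
\]
while each of the three error integrals carries an overall factor $r_j^{1-n/q}$ — coming either from $D\tilde\gamma_{r_j}$, from $D\tilde\delta_{r_j}$, or (via the $(1-n/q)$-Hölder modulus of $\gamma$) from $\tilde\gamma_{r_j}-\gamma$ — multiplying a $t$-integral already shown to be finite, hence they are $O(r_j^{1-n/q})\to0$. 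Therefore $W^{0}_{u_0}(\rho)=W_{u,z_0}(0^+)$ for every $\rho>0$, i.e.\ the constant-exponent Weiss energy of $u_0$ is independent of $\rho$.

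Finally, since $u_0$ minimizes the constant-exponent functional, the computation in the proof of Theorem \ref{monotonicity formula} specialized to $\gamma(x)\equiv\gamma$ (so that $D\gamma\equiv D\delta\equiv0$ and all error terms drop) gives $\tfrac{d}{d\rho}W^{0}_{u_0}(\rho)=\tfrac{2}{\rho}\int_{\partial B_1}|\partial_\nu(u_0)_\rho-\beta(u_0)_\rho|^2$, with $(u_0)_\rho(x):=u_0(\rho x)/\rho^{\beta}$. As the left side vanishes for a.e.\ $\rho$, and by the $C^1$ regularity of $u_0$ for every $\rho$, we get $\partial_\nu(u_0)_\rho=\beta(u_0)_\rho$ on $\partial B_1$ for all $\rho>0$; along each ray $\rho\mapsto u_0(\rho\omega)$ this is the identity $\tfrac{d}{d\rho}\bigl(\rho^{-\beta}u_0(\rho\omega)\bigr)=0$, whose integration yields $u_0(\rho\omega)=\rho^{\beta}u_0(\omega)$, that is, $u_0(\lambda x)=\lambda^{\beta(z_0)}u_0(x)$ for all $\lambda>0$ — the asserted homogeneity. (When \eqref{bound delta} holds, non-degeneracy moreover forces $u_0\not\equiv0$.) I expect the main obstacles to be exactly the two delicate points above: the uniform $C^1_{\mathrm{loc}}$ precompactness of $\{u_r\}$, which relies essentially on the \emph{point-by-point} growth of Theorem \ref{thm-beicodebode} to keep $u_r$ bounded up to the blown-up free boundary, and the vanishing of the error integrals in the limit, where the borderline $t$-integrals are tamed only by the gain $r^{1-n/q}$ afforded by the hypothesis $q>n$.
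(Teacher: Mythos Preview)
Your proposal is correct and follows essentially the same route as the paper: establish the scale-covariance identity $W_{u_r,0}(\rho)=W_{u,z_0}(r\rho)$, use the monotonicity (Theorem~\ref{monotonicity formula}) to conclude that the limit exists and that the constant-exponent Weiss energy $W^{0}_{u_0}$ of the blow-up is independent of $\rho$, and infer homogeneity. The only notable difference is cosmetic: the paper cites \cite[Lemma~7.1]{AP} for the last implication, whereas you write out the equality case $\tfrac{d}{d\rho}W^{0}_{u_0}(\rho)=\tfrac{2}{\rho}\int_{\partial B_1}|\partial_\nu(u_0)_\rho-\beta(u_0)_\rho|^2$ and integrate the resulting ODE along rays; and the paper handles the vanishing of the error terms by reading them off directly from the identity $W^{j}_{u_j}(r)=W_{u}(\lambda_j r)$ (so they are the tails of the convergent integrals defining $W_u$), while you bound them via the explicit $r_j^{1-n/q}$ gain --- both arguments are valid and amount to the same computation.
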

\begin{proof}
Without loss of generality, we assume $z_0 = 0$. Recall
$$
    \beta_0 \coloneqq  \frac{2}{2-\gamma_0}, \quad \text{where} \quad \gamma_0 \coloneqq \gamma(0).
$$
By Definition \ref{blowup definition}, there is a sequence $\lambda_j \to 0$ such that
$$
	u_j(x) \coloneqq \frac{u(\lambda_j x)}{\lambda_j^{\beta_0}} \to u_0,\, \text{locally uniform in }\, \mathbb{R}^n.
$$
On one hand, Theorem \ref{monotonicity formula} ensures that
\begin{equation}\label{formula is monotone}
	\lim_{j \to \infty} \mathcal{W}_{u,0}(\lambda_j r) = \mathcal{W}_{u,0}(0^+),
\end{equation}
which follows from monotonicity. On the other hand, one can scale the formula in the parameter $\lambda_j$ to obtain the following expression for $\mathcal{W}_{u,0}(\lambda_j r)$:
\begin{eqnarray*}
	 &  & r^{-(n+2(\beta_0-1))}\mathcal{J}_{\delta_j}^{\gamma_j}(u_j,B_r)-\beta_0 r^{-((n-1) + 2\beta_0)}\int\limits_{\partial B_r}u_j^2\, d\mathcal{H}^{n-1}\\
	&  & - \underbrace{\int_{0}^{\lambda_j r} \beta_0 t^{-(n+\beta_0 \gamma_0 +1)}\left(\int\limits_{B_t}(\gamma(x)-\gamma_0)\delta(x)w(x,t)^{\gamma(x)} \, dx\right)\, dt}_{\mathcal{I}^j_1} \\
    &  & - \underbrace{\int_{0}^{\lambda_j r} t^{-(n+\beta_0 \gamma_0 + 1)}\left(\int\limits_{B_t}\left(D\gamma(x)\cdot x\right) \delta(x) w(x,t)^{\gamma(x)}  \ln(w(x,t)) \, dx\right)\, dt}_{\mathcal{I}^j_2}\\
    &  & -  \underbrace{\int_{0}^{\lambda_j r} t^{-(n+\beta_0 \gamma_0 + 1)} \left(\int\limits_{B_t} (D\delta(x) \cdot x)w(x,t)^{\gamma(x)} \, dx\right)\,dt}_{\mathcal{I}^j_3},
\end{eqnarray*}
where
$$
	\gamma_j(x) \coloneqq \gamma(\lambda_j x), \text{ and }\, \delta_j(x) \coloneqq \lambda_j^{\beta_0(\gamma_j(x)-\gamma_0)}\delta(\lambda_j x), 
$$
and $w$ is as defined in the statement of Theorem \ref{monotonicity formula}. Let us now show that the error terms (integrals $\mathcal{I}^j_1$, $\mathcal{I}^j_2$ and $\mathcal{I}^j_3$) tend to zero as $\lambda_j \to 0$. This can be justified by combining the dominated convergence theorem with growth estimates for $u$ and assumptions \eqref{integra delta}, \eqref{integra gamma}. Indeed, observe that by Theorem \ref{thm-beicodebode}, it follows that
\begin{eqnarray*}
    w(x,t)  & = & \left(\frac{|x|}{t}\right)^{\beta_0}u\left(t\frac{x}{|x|}\right)\\
            & \leq & Ct^{\beta_0},
\end{eqnarray*}
for $x \in B_t$. Using the $\mu-$H\"older continuity of $\gamma$ (which follows from the assumption that $\gamma \in W^{1,q}$ for $q>n$), we have
\begin{eqnarray*}
    \int\limits_{B_t}|\gamma(x)-\gamma_0|\delta(x)w(x,t)^{\gamma(x)} \, dx & \leq & Ct^{\mu} \int_{B_t} t^{\beta_0 \gamma(x)}\, dx\\
    & \leq & C t^{n + \mu + \beta_0 \gamma_\ast(0,t)}\\
    & \leq & C_1 t^{n + \mu + \beta_0 \gamma_0},
\end{eqnarray*}
where we used that $t^{\beta_0(\gamma_\ast(0,t)-\gamma_0)}$ is uniformly bounded as $t \to 0$. Therefore, it follows that the function
$$
    \beta_0 t^{-(n+\beta_0 \gamma_0 +1)}\left(\int\limits_{B_t}(\gamma(x)-\gamma_0)\delta(x)w(x,t)^{\gamma(x)} \, dx\right) \lesssim t^{\mu -1} \in L^1(0,1).
$$
By the dominated convergence theorem, $\mathcal{I}_1^j \to 0$ as $j \to \infty$. For the term $\mathcal{I}^j_2$, we have
\begin{eqnarray*}
	\left|\mathcal{I}^j_2 \right| & \leq & C_2 \int_{0}^{\lambda_j r}t^{-(n+\beta_0 \gamma_0)}\left(\int\limits_{B_t}|D\gamma(x)| t^{\beta_0 \gamma(x)}  |\ln(t^{\beta_0})| \, dx\right)\, dt\\
	& \leq & C_3 \int_{0}^{\lambda_j r}t^{-n}|\ln(t)|\left(\int\limits_{B_t}|D\gamma(x)| \, dx\right)\, dt.
\end{eqnarray*}
By assumption \eqref{integra gamma}, we can apply dominated convergence once again and obtain $\mathcal{I}_2^j \to 0$ as $j \to \infty$. The estimate for the term $\mathcal{I}^j_3$ follows the same lines of reasoning by using \eqref{integra delta} instead. Now, taking into account the following set of convergences
$$
	u_j \to u_0, \quad \delta_j \to \delta_0, \quad \gamma_j \to \gamma_0, \quad \text{and }\, \lambda_j^{\beta_0(\gamma_j - \gamma_0)} \to 1,
$$
locally uniform as $j \to \infty$, we obtain
$$
	\mathcal{W}^\infty_{u_0,0}(r) = \lim_{j \to \infty} \mathcal{W}_{u,0}(\lambda_j r),
$$
where
$$
	\mathcal{W}^\infty_{u_0,0}(r) \coloneqq r^{-(n+2(\beta_0-1))}\mathcal{J}_{\delta_0}^{\gamma_0}(u_0,B_r)-\frac{1}{2}\beta_0 r^{-((n-1) + 2\beta_0)}\int\limits_{\partial B_r}u_0^2\, d\mathcal{H}^{n-1}.
$$
Putting this together with \eqref{formula is monotone}, we have
$$
	\mathcal{W}^\infty_{u_0}(r) = \mathcal{W}_{u,0}(0^+),	
$$
for any $r>0$.

We conclude that $\mathcal{W}^\infty_{u_0,0}$ is constant, and since $u_0$ is a minimizer of the functional 
\begin{equation}\label{AP constant gamma}
    \mathcal{J}_{\delta_0}^{\gamma_0}(v,B_R) = \int_{B_R} \frac{1}{2}|Dv|^2 + \delta(0)v^{\gamma(0)} \, dx,
\end{equation}
for every $R>0$ (the proof of which follows the reasoning in \cite[Lemma 2.3 and Remark 1]{PT24}), it follows that it is $\beta_0$-homogeneous. This can be seen from the classical proof \cite[Lemma 7.1]{AP}, but also from the proof of Theorem \ref{monotonicity formula}, where we obtain that the quantity
$$
	\int_{\partial B_r}(\partial_\nu u_0 - \beta_0 r^{-1}u_0)^2\,d\mathcal{H}^{n-1}
$$
must be equal to zero for any $r>0$. This is equivalent to saying that $u_0$ is $\beta_0$-homogeneous.
\end{proof}

\begin{remark}\label{remark de convergencia}
To ensure the existence of blow-ups, one needs to guarantee that the family $(u_r)_{r>0}$, defined as
$$
    u_r(x) = \frac{u(z_0 + rx)}{r^{\beta(z_0)}} \quad \text{for} \quad \beta(z_0) = \frac{2}{2-\gamma(z_0)},
$$
is locally bounded in $C^{1,\beta(z_0)-1}$. Indeed, by Theorem \ref{thm-beicodebode}, there exists a constant $C'>1$ such that
$$
    \|u_r\|_{L^\infty(B_1)} \leq C'.
$$
Moreover, by applying Theorem \ref{localregthm} to $u$ over $B_{r}(z_0)$, we obtain
$${\mathrm{osc}}_{B_r(z_0)} |Du| \coloneqq \left(\sup_{B_r(z_0)} |Du|\right) - \left(\inf_{B_r(z_0)} |Du|\right) \leq C r^{\frac{\gamma_\ast(z_0,2r)}{2-\gamma_\ast(z_0,2r)}}.
$$
Proceeding as at the end of the proof of Theorem \ref{thm-beicodebode}, we use condition \eqref{CMC} to obtain
$$
    Cr^{\frac{\gamma_\ast(z_0,2r)}{2-\gamma_\ast(z_0,2r)}} \leq \overline{C} r^{\frac{\gamma(z_0)}{2-\gamma(z_0)}},
$$
which implies
$$
    {\mathrm{osc}}_{B_r(z_0)}|Du| \leq \overline{C} r^{\frac{\gamma(z_0)}{2-\gamma(z_0)}}.
$$
As a consequence, the family $\{u_r\}_{r>0}$ is locally bounded in $C^{1,\beta(z_0)-1}$.
\end{remark}

Given the above, blow-up limits of minimizers of the variable singularity functional \eqref{fifi} are global minimizers of an energy-functional with constant singularity, namely $\gamma(z_0)$. Corollary \ref{classification of blow ups} further yields that blow-ups are $\beta(z_0)$-homogeneous. 

The pivotal insight here is that the blow-up limits of minimizers of the variable singularity functional are entitled to the same theoretical framework applicable to the constant coefficient case. In particular, in dimension $n=2$, blow-up profiles are thoroughly classified due to \cite[Theorem 8.2]{AP}. More precisely, if $u_0$ is the blow-up of $u$ at $z_0 \in F(u)$, for $u$ a local minimizer of \eqref{fifi} and $0 < \gamma(z_0) < 1$, then $u_0$ verifies
$$
    \varrho_0(z_0)^{-\frac{1}{\beta(z_0)}}u_0(x)^{\frac{1}{\beta(z_0)}} =  (x \cdot \nu)_+ \quad \text{for} \quad x \in \mathbb{R}^n, 
$$
for some $\nu \in \partial B_1$, for a constant $\varrho_0(z_0)$, depending on $z_0$, precisely defined in the upcoming section.

\begin{definition}\label{def: min cone}
A minimizer $u$ of the energy-functional \eqref{fifi} with $\delta \equiv \delta(z_0)$ and $\gamma \equiv \gamma(z_0)$ for some $z_0 \in \mathbb{R}^n$ which is $\beta(z_0)$-homogeneous is called a $\beta(z_0)$-minimal cone. 
\end{definition}

Classifying minimal cones in lower dimensions is crucial, chiefly because of Federer's dimension reduction argument, which we will utilize in our upcoming section.

\section{Free boundary regularity}\label{sct FB reg}

In this final section, we investigate the regularity of the free boundary. For models with constant exponent $\gamma$, differentiability of the free boundary was obtained in \cite{AP}, following the developments of \cite{AC}. Although it may seem plausible, the task of amending the arguments from \cite{AC, AP} to the case of varying exponents -- the object of study of this paper -- proved quite intricate. More recently, similar free boundary regularity estimates have been obtained via a linearization argument in \cite{DSS} (see also \cite{DS}). Here, we will adopt the latter strategy, \textit{i.e.}, and proceed through an approximation technique, where the tangent models are the ones with constant $\gamma$.

More precisely, given a point $z_0 \in F(u)$, let us define
\[
\beta(z_0) \coloneqq \frac{2}{2 - \gamma(z_0)}, \qquad
\varrho(z_0) \coloneqq \left( \frac{(\beta(z_0)-1)\,\beta(z_0)}{\gamma(z_0)\,\delta(z_0)} \right)^{\tfrac{1}{\gamma(z_0)-2}}
\]
and
$$
    w = \varrho(z_0)^{-\frac{1}{\beta(z_0)}} u^{\frac{1}{\beta(z_0)}}.
$$
We note that since the equation holds within the set where $u$ is positive, we have
$$
    \delta(x)\, \gamma(x) u^{\gamma(x)-1} = \varrho(z_0) \beta(z_0) w^{\beta(z_0)-2} \left[w \Delta w + (\beta(z_0)-1)|Dw|^2 \right],
$$
and so
$$
    w\Delta w = \delta(x)\frac{\gamma(x)}{\beta(z_0)}\varrho(z_0)^{\gamma(x)-2}w^{\beta(z_0)(\gamma(x)-1)+2-\beta(z_0)} - (\beta(z_0)-1)|Dw|^2.
$$
Since
$$
    \beta(z_0)(\gamma(x)-1)+2-\beta(z_0) = \beta(z_0)(\gamma(x) - \gamma(z_0)),
$$
we can rewrite the equation as
\begin{equation}\label{eq almost linearized}
    \Delta w = \frac{h(x,w,Dw)}{w},
\end{equation}
where $h\colon B_1 \times \mathbb{R}^+ \times \mathbb{R}^n \mapsto \mathbb{R}$ is defined as
\begin{equation}\label{RHS of almost linear eq}
    h(x,s,\xi) \coloneqq \delta(x)\frac{\gamma(x)}{\beta(z_0)}
  \,\varrho(z_0)^{\gamma(x)-2}\,s^{\beta(z_0)\,(\gamma(x)-\gamma(z_0))}
  - \bigl(\beta(z_0)-1\bigr)\,|\xi|^2.
\end{equation}
It will be useful to the upcoming analysis to introduce the two related distorted functions
\begin{equation}\label{extremal functions}
    w^+ = \varrho(z_0)^{-\frac{1}{\beta(z_0)}} u^{\frac{1}{\beta^\ast}} \quad \text{and} \quad w^- = \varrho(z_0)^{-\frac{1}{\beta(z_0)}} u^{\frac{1}{\beta
    _\ast}},
\end{equation}
where
$$
   \beta^\ast \coloneqq \beta^\ast(z_0,1) \quad \text{and} \quad  \beta_\ast \coloneqq \beta_\ast(z_0,1).
$$
It is not hard to see, by the very same computations, that $w^+$ solves
$$
    w^+\Delta w^+ = \Lambda^+_{z_0}(x)(w^+)^{\beta^\ast(\gamma(x)-\gamma^\ast)} - (\beta^\ast - 1)|Dw^+|^2,
$$
in $\{w^+ >0\} = \{w>0\}$, where
$$
    \Lambda^+_{z_0}(x) \coloneqq \frac{\varrho(z_0)^{\frac{\beta^\ast}{\beta(z_0)}(\gamma(x)-2)}}{\beta^\ast}\delta(x)\gamma(x) \quad \text{and} \quad \gamma^\ast \coloneqq \sup_{B_1(z_0)} \gamma(x).
$$
Likewise
$$
    w^-\Delta w^- = \Lambda^-_{z_0}(x)(w^-)^{\beta_\ast(\gamma(x)-\gamma_\ast)} - (\beta_\ast - 1)|Dw^-|^2,
$$
in $\{w^- >0\} = \{w>0\}$, with
$$
    \Lambda^-_{z_0}(x) \coloneqq \frac{\varrho(z_0)^{\frac{\beta_\ast}{\beta(z_0)}(\gamma(x)-2)}}{\beta_\ast}\delta(x)\gamma(x) \quad \text{and} \quad \gamma_\ast \coloneqq \inf_{B_1(z_0)} \gamma(x).
$$
Since $\beta_\ast \leq \beta(z_0) \leq \beta^\ast$, we have
$$
    w^- \leq w \leq w^+,
$$
near the free boundary point $z_0$.

In \cite{DSS}, when $\gamma(\cdot)$ is constant, the function appearing on the right-hand side of \eqref{eq almost linearized} reduces essentially to $(1-|\xi|^2)$, which is nonnegative for $\xi \in \overline{B}_1$ and negative outside this region. This sign structure plays a crucial role in the construction of barriers. In our setting, however, the situation is fundamentally different: the region where the function $h(x,s,\xi)$ changes sign depends on the solution itself, which, \textit{a priori}, may render the construction of suitable barriers considerably more delicate. To overcome this issue, we exploit the fact that $w$ is trapped between $w^-$ and $w^+$, and the right-hand sides of the equations they satisfy have a definite sign (from below and above, respectively), which is crucial for the construction of strict sub- and supersolutions. Moreover, as the scale increases, these inequalities become increasingly sharp, allowing for the successful implementation of the recursive improvement of flatness strategy.

We first remark that defining $w_r$ as
\begin{equation}\label{lip rescaling of linearized eq}
    w_r(x) = \frac{w(z_0 + rx)}{r},
\end{equation}
direct calculations yield 
\begin{equation}\label{scaled equation}
    \Delta w_r = \frac{h_r(x,w_r,Dw_r)}{w_r},
\end{equation}
where
\begin{align*}
    h_r(x,s,\xi) & \coloneqq \delta(z_0 + rx)\frac{\gamma(z_0 + rx)}{\beta(z_0)}\varrho(z_0)^{\gamma(z_0 + rx) - 2} (rs)^{\beta(z_0)(\gamma(z_0 + rx) - \gamma(z_0))} \\
    & \quad - (\beta(z_0)-1)|\xi|^2. 
\end{align*}
We can now pass to the limit as $r \rightarrow 0$, and in view of the choice of $\varrho(z_0)$, we reach
$$
    h_r(x,s,\xi) \rightarrow  \overline{h}(z_0,\xi),
$$
where $\overline{h}(z_0,\xi)$ is given by
$$
    \overline{h}(z_0,\xi) \coloneqq (\beta(z_0)-1)(1-|\xi|^2).
$$

The second key remark is that if the exponent function $\gamma(x)$ is assumed to be H\"older continuous, say,  of order $\mu \in (0,1)$, then for a fixed $s>0$, the above convergence does not depend on the free boundary point, $z_0 \in F(u)$. Indeed, we can estimate
\begin{eqnarray*} 
|\beta(z_0)(\gamma(z_0 + rx) - \gamma(z_0))\ln(rs)|  & \leq  &\displaystyle  Cr^\mu|\ln(r) + \ln(s)|\\
     & \leq &  C([\gamma]_{C^{0,\mu}},|\ln(s)|) r^{\frac{\mu}{2}},
\end{eqnarray*}
which implies that
$$
    \lim_{r \rightarrow 0} (rs)^{\beta(z_0)(\gamma(z_0 + rx) - \gamma(z_0))} = 1.
$$
Arguing similarly, one also obtains that
$$
    \lim_{r \rightarrow 0} \delta(z_0 + rx)\frac{\gamma(z_0 + rx)}{\beta(z_0)}\varrho(z_0)^{\gamma(z_0 + rx) - 2} = \beta(z_0) - 1,
$$
uniformly in $z_0 \in F(u)$. Here, we only need the uniform continuity of the ingredients involved. These insights are critical to ensure the linearized problem is uniformly close to the one with constant exponent as treated in \cite{DSS} , and they will be made precise later.

A final remark concerns the notion of viscosity solution used to interpret the preceding equations and the free boundary condition. Assuming that $\gamma$ and $\delta$ are H\"older continuous, the extremal functions $w^-, w^+$ and $w$ are classical solutions on their positivity sets; hence they solve the PDE there in the usual viscosity sense. The delicate point is verifying the free boundary condition. For this, we introduce the following notation: given $x,y\in B_2$, define the quotient
\[
    \varphi(x,y) \coloneqq 
        \frac{\varrho(y)^{-1/\beta(y)}}{\varrho(x)^{-1/\beta(x)}}.
\]
Now consider $\mathcal{D}^\ast \colon B_1 \to \mathbb{R}^+$ and $\mathcal{D}_\ast \colon B_1 \to \mathbb{R}^+$ defined by
\[
    \mathcal{D}^\ast(x) \coloneqq 
        \sup_{y \in B_1(x)} \varphi(x,y),
    \qquad 
    \mathcal{D}_\ast(x) \coloneqq 
        \inf_{y \in B_1(x)} \varphi(x,y),
\]
which always satisfies $\mathcal{D}^\ast \geq 1 \geq \mathcal{D}_\ast$. Then, for each $x \in B_1$, we define the (possibly overlapping) regions
$$
    \Gamma_\ast(x) \coloneqq \overline{B}_{\mathcal{D}^\ast(x)} \cap \mathbb{R}^n \setminus B_{\mathcal{D}_\ast(x)}.
$$
As a check, when $\gamma$ and $\delta$ are constant we recover the classical situation: $\Gamma_\ast(x) = \partial B_1$ for every $x \in B_1$.

\begin{definition}\label{sub-super solution property}
Let $\overline{w}$ be nonnegative. We say that 
$$
D\overline{w}\in \Gamma_\ast \quad \text{on} \quad F(\overline{w})\cap B_1
$$ 
\emph{in the viscosity sense} if, 
for every $z_0\in F(\overline{w})\cap B_1$ and every $\psi\in C^2$ such that $\psi^+$ touches $\overline{w}$ 
from below (resp.\ from above) at $z_0$ with $|D\psi(z_0)|\neq 0$, one has
\[
\begin{aligned}
|D\psi(z_0)| &\le \mathcal{D}^\ast(z_0) \quad (\text{resp.\ } |D\psi(z_0)| \ge \mathcal{D}_\ast(z_0)),\\
\text{that is}\quad 
D\psi(z_0) &\in \overline{B}_{\mathcal{D}^\ast(z_0)} 
\quad (\text{resp.\ } D\psi(z_0) \in \mathbb{R}^n \setminus B_{\mathcal{D}_\ast(z_0)}).
\end{aligned}
\]
\end{definition}

The motivation for introducing this definition lies at the heart of the present work: it reflects the fact that the geometry of the free boundary may vary from point to point. We emphasize that, although the functions $w^-$, $w$, and $w^+$ defined above satisfy the equation in the viscosity sense within their positivity sets, the free boundary condition is guaranteed to hold only for $w^-$ and $w^+$. This observation is summarized in the following result.

\begin{lemma}
Let $w^{-}$ and $w^{+}$ be defined as in \eqref{extremal functions}.  
Then $w^{-}$ satisfies the free boundary condition from below, while $w^{+}$ 
satisfies it from above.
\end{lemma}

\begin{proof}
We prove the result for $w^-$ only. By definition,
$$
    w^- = \varrho(z_0)^{-\frac{1}{\beta(z_0)}} u^{\frac{1}{\beta
    _\ast}},
$$
where $u$ is a minimizer of the functional \eqref{fifi} and $z_0 \in F(u)$. We assume $z_0 = 0$, for simplicity, and argue by compactness. Assume, seeking a contradiction, that there is $\psi \in C^2$ such that $\psi^+$ touches $w^-$ from below at $x_0 \in F(w^-)$, with $|D\psi(x_0)| \not = 0$ such that
$$
    |D\psi(x_0)| > \mathcal{D}^\ast(x_0).
$$
If we do a Lipschitz rescaling on both $\psi$ and $w^-$ around $x_0$, we conclude that the function $\psi_r(x) \coloneqq r^{-1}\psi(x_0 + rx)$ is such that $(\psi_r)_+$ touches $w_r(x) = r^{-1}w^-(x_0 + rx)$ from below at $0 \in F(w_r)$, with $|D\psi_r(0)| > \mathcal{D}^\ast(x_0)$. Recalling the definition of $w^-$, we have
\begin{align*}
    w_r(x) & = \frac{w^-(x_0 + rx)}{r} = \frac{\varrho(0)^{-\frac{1}{\beta(0)}}u^{\frac{1}{\beta_\ast}}(x_0 + rx)}{r}\\
            & = \frac{\varrho(0)^{-\frac{1}{\beta(0)}}}{\varrho(x_0)^{-\frac{1}{\beta(x_0)}}}\frac{\varrho(x_0)^{-\frac{1}{\beta(x_0)}}u^{\frac{1}{\beta_\ast}}(x_0 + rx)}{r}\\
            & = \varphi(x_0,0)\frac{\varrho(x_0)^{-\frac{1}{\beta(x_0)}}u^{\frac{1}{\beta_\ast}}(x_0 + rx)}{r}\\
            & \leq \varphi(x_0,0)\, \varrho(x_0)^{-\frac{1}{\beta(x_0)}} u_r(x)^{\frac{1}{\beta(x_0)}},
\end{align*}
where we used that $1/\beta_\ast \geq 1/\beta(x_0)$ and $u_r(x) \coloneqq r^{-\beta(x_0)}u(x_0+rx)$. By optimal regularity estimates, the sequence $u_r$ converges to a minimizer $u_0$ to the Alt--Philips functional with $\delta \equiv \delta(x_0)$ and $\gamma \equiv \gamma(x_0)$. In summary, we obtain $(\psi_r)_+$ touches 
$$
    \varphi(x_0,0)\, \varrho(x_0)^{-\frac{1}{\beta(x_0)}} \left(u_r(x) \right)^{\frac{1}{\beta(x_0)}}
$$
from below at $x=0$, for every $r>0$, with $|D\psi_r(0)|>\mathcal{D}^\ast(x_0)$. Passing to the limit, we get that $(D\psi(0)\cdot x)_+$ touches 
$$
    \varphi(x_0,0)\, \varrho(x_0)^{-\frac{1}{\beta(x_0)}} u_0^{\frac{1}{\beta(x_0)}}
$$
from below at $x = 0$. However, since $u_0$ is a minimizer of the Alt--Philips functional with constant $\delta \equiv \delta(x_0)$ and $\gamma \equiv \gamma(x_0)$, and $0 \in F(u_0)$, the function
$$
    w_0 \coloneqq \varrho(x_0)^{-\frac{1}{\beta(x_0)}} u_0^{\frac{1}{\beta(x_0)}}
$$
should satisfy the free boundary condition as in Definition \ref{sub-super solution property}, with $\mathcal{D}^\ast = \mathcal{D}_\ast = 1$, see \cite{AP}. But then, the previous reasoning would imply
$$
    \frac{1}{\varphi(x_0,0)}|D\psi(0)| \leq 1,
$$
which is a contradiction, since
$$
    \frac{1}{\varphi(x_0,0)}|D\psi(0)| > \frac{1}{\varphi(x_0,z_0)} \mathcal{D}^\ast(x_0) \geq 1.
$$
The proof that $w^+$ satisfies the free boundary condition from above follows the same reasoning.
\end{proof}

The discussions presented above bring us to the next crucial tool required in the proof of the free boundary regularity. 

\begin{proposition}\label{improvement of flatness}
Let $w$ be a viscosity solution to \eqref{eq almost linearized}, with $0 \in F(w)$, and assume
$$
    \sup\left([\gamma]_{C^{0,\mu}(0)}, [\delta]_{C^{0,\mu}(0)}\right) \leq \epsilon^2.
$$
There exist universal positive parameters $\epsilon_0$ and $r$, such that if
\begin{equation}\label{flat 1}
    (x_n - \epsilon)_+ \leq w^-(x) \leq w(x) \leq w^+(x) \leq (x_n + \epsilon)_+ \quad \text{in }\, B_1,
\end{equation}
with $\epsilon \leq \epsilon_0$, then there exists $\nu \in \partial B_1$ such that
\begin{equation}\label{improv flat 1}
    \left(\nu \cdot x - \frac{\epsilon}{2}r\right)_+ \leq w^-(x) \leq w(x) \leq w^+(x) \leq \left(\nu \cdot x + \frac{\epsilon}{2}r\right)_+ \quad \text{in }\, B_r,
\end{equation}
with $|\nu - e_n|\leq C\epsilon$, where $C$ depends only on universal parameters.
\end{proposition}

\begin{proof}
Assume, seeking a contradiction, that the proposition fails to hold. Then, there would be a sequence $\epsilon_k \to 0$, and a sequence of solutions $w_k$ to \eqref{eq almost linearized} satisfying \eqref{flat 1}, with $\gamma_k$ and $\delta_k$ satisfying
\begin{equation}\label{coefs converging to constant}
    \sup\left([\gamma_k]_{C^{0,\mu}(0)}, [\delta_k]_{C^{0,\mu}(0)}\right) \leq \epsilon_k^2,
\end{equation}
but failing to satisfy \eqref{improv flat 1} for any unit vector $\nu \in \partial B_1$. We define
$$
  v_k \coloneqq \frac{w_k - x_n}{\epsilon_k},  
$$
which is normalized by \eqref{flat 1}. By a consequence of Lemma \ref{almost equicontinuity}, it follows that $v_k$ converges to a function $v_\infty$ locally uniformly in $B_{1/2}\cap \{x_n \geq 0\}$. We further observe that the functions defined by
$$
    v_k^- \coloneqq \frac{w_k^- - x_n}{\epsilon_k} \quad \text{and} \quad v_k^+ \coloneqq \frac{w_k^+ - x_n}{\epsilon_k}
$$
also converge to the same limit $v_\infty$. Recall that since $[\gamma_k]_{C^{0,\mu}(0)} \leq \epsilon_k^2$, we obtain $\gamma_k \to \gamma_\infty$ locally uniformly, where $\gamma_\infty \in (0,1)$ is a constant, and also $\delta_k \to \delta_\infty$, for some constant $\delta_\infty$. Let us now show that $v_\infty$ is a viscosity solution to
\begin{equation}\label{linearized eq DSS}
\left\{
    \begin{array}{rllcl}
        \displaystyle \Delta v_\infty + 2(\beta_\infty-1)\frac{\partial_{e_n}v_\infty}{x_n} & = & 0 & \mathrm{in} & \,B_{1/2}\cap \{x_n>0\}\\
     \partial_{e_n}v_\infty & = & 0 & \mathrm{on} &  B_{1/2}\cap \{x_n = 0\},
    \end{array}
\right.
\end{equation}
in the viscosity sense, where $\beta_\infty \coloneqq 2/(2-\gamma_\infty)$. The key idea is that we can either use $v_k$, $v_k^+$, or $v_k^-$ to approximate $v_\infty$. Indeed, notice that $v_k$ solves
$$
    \Delta v_k = \frac{1}{\epsilon_k}\,\frac{h_k(x,x_n + \epsilon_k v_k,e_n + \epsilon_k Dv_k)}{x_n + \epsilon_k v_k} \eqqcolon \mathcal{R}_k(x),
$$
where $h_k$ is defined as in \eqref{RHS of almost linear eq} with $\gamma_k$ and $\delta_k$ instead. We rewrite $h_k$ as
$$
   h_k(x,x_n + \epsilon_k v_k,e_n + \epsilon_k Dv_k) = g_k\left(x,x_n + \epsilon_k v_k\right) + (\beta_k-1)\left(1-|e_n + \epsilon_k Dv_k|^2\right),
$$
where
$$
    g_k(x,s) \coloneqq \delta_k(x)\frac{\gamma_k(x)}{\beta_k}\varrho_k^{\gamma_k(x)-2}s^{\beta_k(\gamma_k(x)-\gamma_k(0))} - (\beta_k-1),
$$
and
$$
    \varrho_k \coloneqq \left[\frac{(\beta_k(0)-1)\beta_k(0)}{\gamma_k(0) \delta_k(0)} \right]^{\frac{1}{\gamma_k(0) - 2}} \longrightarrow \varrho_\infty \coloneqq \left[\frac{(\beta_\infty-1)\beta_\infty}{\gamma_\infty \delta_\infty} \right]^{\frac{1}{\gamma_\infty - 2}}.
$$
Now we see that
\begin{equation}\label{convergence to zero}
    g_k\left(x,x_n + \epsilon_k v_k\right) = o(\epsilon_k), \quad \text{as }\, k \to \infty.
\end{equation}
Indeed, for $x_n \geq \eta > 0$, we rewrite
$$
    (x_n + \epsilon_k v_k)^{\beta_k(\gamma_k(x)-\gamma_k(0))} = e^{\beta_k(\gamma_k(x)-\gamma_k(0))\ln(x_n + \epsilon_k v_k)}.
$$
For $k$ large enough, we have
$$
    \ln(x_n + \epsilon_k v_k) = \ln(x_n) + o(1),
$$
and so, a Taylor expansion of the exponential near zero gives
$$
    (x_n + \epsilon_k v_k)^{\beta_k(\gamma_k(x)-\gamma_k(0))} = 1 + \beta_k(\gamma_k(x)-\gamma_k(0))\ln(x_n + \epsilon_k v_k) + o(\epsilon_k).
$$
As a consequence,
\begin{align*}
    \frac{g_k(x,x_n + \epsilon_k v_k)}{(\beta_k-1)} & =  \delta_k(x)\frac{\gamma_k(x)}{\beta_k(\beta_k -1)}\varrho_k^{\gamma_k(x)-2}(x_n + \epsilon_k v_k)^{\beta_k(\gamma_k(x)-\gamma_k(0))} - 1\\
    & =  \delta_k(x)\frac{\gamma_k(x)}{\beta_k(\beta_k -1)}\varrho_k^{\gamma_k(x)-2} - 1 + o(\epsilon_k)\\
    & \quad + \delta_k(x)\frac{\gamma_k(x)}{(\beta_k -1)}\varrho_k^{\gamma_k(x)-2}(\gamma_k(x)-\gamma_k(0))\ln(x_n + \epsilon_k v_k) \\
    & = \delta_k(x)\frac{\gamma_k(x)}{\beta_k(\beta_k -1)}\varrho_k^{\gamma_k(x)-2} - 1 + o(\epsilon_k).
\end{align*}
By similar computations, we also get
$$
    \delta_k(x)\frac{\gamma_k(x)}{\beta_k(\beta_k -1)}\varrho_k^{\gamma_k(x)-2} - 1 = o(\epsilon_k),
$$
and so \eqref{convergence to zero} follows. Therefore,
\begin{align*}
    \mathcal{R}_k(x) & =  \frac{1}{x_n + \epsilon_k v_k}\frac{g_k(x,x_n + \epsilon_kv_k)}{\epsilon_k}+ \frac{(\beta_k-1)}{x_n + \epsilon_k v_k}(- 2Dv_k \cdot e_n - \epsilon_k |Dv_k|^2)\\
    & =  \frac{1}{x_n + \epsilon_k v_k}\,o(1) - \frac{2(\beta_k-1)}{x_n + \epsilon_k v_k} \partial_{e_n}v_k + o(1),
\end{align*}
from which the first equation in \eqref{linearized eq DSS} for $v_\infty$ follows. To show that it also solves the second equation, let $y \in \{x_n =0\}$ and assume, seeking a contradiction, that
$$
    \phi(x',x_n) \coloneqq A|x' - y'|^2 + b + px_n^{1-s},
$$
with $s = 2(\beta_\infty -1)$ and $p<0$, touches $v_\infty$ from above at $y$. As in \cite{DSS}, we can replace this test function by
$$
    A|x' - y'|^2 + b - Lx_n^2 + \frac{p}{2}x_n,
$$
for $L>2A$ large enough, which still touches $v_\infty$ from above at $y$. Direct computations give
$$
    \Delta \phi = -2(L - nA) < 0,
$$
for $L$, again, large enough. Now, recalling the convergences discussed at the beginning of the proof, we have $v_k^+ \to v_\infty$ locally uniformly. It then follows that, for 
$$
    \lambda_k \coloneqq \sup(v_k^+ - \phi),
$$
$\phi + \lambda_k$ touches $v_k^+$ from above at $y_k \to y$. Recalling the definition of $v_k^+$, we have that $\phi_k \coloneqq x_n + \epsilon_k(\phi + \lambda_k)$ touches $w_k^+$ from above at $y_k$. On the one hand, we have $\Delta \phi_k < 0$, for every $k \in \mathbb{N}$. On the other hand, since $p<0$, we have
$$
    |D\phi_k(y_k)| \leq \sqrt{1 + cp\epsilon_k} < 1,
$$
for large $k$, and
$$
    \delta_k(y_k)\frac{\gamma_k(y_k)}{\beta_k^\ast}\varrho_k^{\gamma_k(y_k)-2}(\phi_k(y_k))^{\beta_k^\ast(\gamma_k(y_k)-\gamma_k^\ast)} \geq \delta_k(y_k)\frac{\gamma_k(y_k)}{\beta_k^\ast}\varrho_k^{\gamma_k(y_k)-2} \approx 1,
$$
as $k \to \infty$, where we used that $\gamma_k(y_k)-\gamma_k^\ast \leq 0$. As a consequence, $\phi_k$ is a strict supersolution to the equation for $w_k^+$, but this is a contradiction. This gives us that $y_k$ cannot be in the positivity set of $w_k^+$ for large $k$, and thus has to be at the free boundary. The free boundary condition for $w_k^+$ gives
$$
    |D\phi_k(y_k)| \geq \mathcal{D}_\ast(y_k),
$$
where $\mathcal{D}_\ast(y_k)$ is defined with $\delta_k$ and $\gamma_k$ instead. By Assumption \eqref{coefs converging to constant}, we obtain $\mathcal{D}_\ast(y_k) = 1 + O(\epsilon_k^2)$, from which follows that
$$
    1 + O(\epsilon_k^2) \leq |D\phi_k(y_k)| \leq \sqrt{1 + cp\epsilon_k},
$$
which is a contradiction for $k$ large, as $p<0$. This shows that $v_\infty$ is a subsolution to the second equation in \eqref{linearized eq DSS}. To show it is also a supersolution is similar, but using $v_k^-$ instead.

The last step consists of importing regularity from the limiting problem \eqref{linearized eq DSS} back to $v_k$, which is standard. We just point out that we use $v_k^+$ and $v_k^-$ to do so. The $C^{1,\overline{\delta}}$ regularity of \eqref{linearized eq DSS} was obtained in \cite[Theorem 7.2]{DSS}.
\end{proof}

Let us now prove a lemma that gives the equicontinuity of the sequence $v_k$ used in the proof of Proposition \ref{improvement of flatness}. We denote with $B_r'(x')$ the $\mathbb{R}^{n-1}-$di\-mensional ball, centered at $x'$ and radius $r>0$. We omit the center whenever $x' = 0$, and we may also abuse notation and identify $x' = (x',0)$.

\begin{lemma}\label{almost equicontinuity}
Let $w$ be a viscosity solution to \eqref{eq almost linearized} with $0 \in F(w)$, satisfying
\begin{equation}\label{smallness of oscillating}
    \sup\left([\gamma]_{C^{0,\mu}(0)}, [\delta]_{C^{0,\mu}(0)}\right) \leq \epsilon^2,
\end{equation}
and assume
\begin{equation*}
    (x_n)_+ \leq w^-(x) \leq w(x) \leq w^+(x) \leq (x_n + 2 \epsilon)_+ \quad \text{in }\, B_1.
\end{equation*}
There are constants $\epsilon_0>0$ and $r>0$ such that if $\epsilon \leq \epsilon_0$ and
$$
    (\overline{x}_n + \epsilon)_+ \leq w^-(\overline{x}), \quad \text{for} \quad \overline{x} = \frac{1}{5}e_n,
$$
then
$$
    \left(x_n + c \epsilon\right)_+ \leq w^-(x), \quad \text{in} \quad B_r,
$$
for some $c \in (0,1)$ universal. Similarly, if
\begin{equation*}
    (x_n - 2\epsilon)_+ \leq w^-(x) \leq w(x) \leq w^+(x) \leq (x_n)_+ \quad \text{in }\, B_1,
\end{equation*}
and
$$
    (\overline{x}_n - \epsilon)_+ \geq w^+(\overline{x}), \quad \text{for} \quad \overline{x} = \frac{1}{5}e_n,
$$
then
$$
    \left(x_n - c \epsilon\right)_+ \geq w^+(x), \quad \text{in} \quad B_r.
$$
\end{lemma}

\begin{proof}
Consider
$$
    v \coloneqq \frac{w^- - x_n}{\epsilon}.
$$
This function satisfies $0 \leq v \leq 2$ by the flatness assumption. For a fixed small parameter $\vartheta>0$, define the cylinder
$$
    \mathcal{C} \coloneqq B'_{3/4} \times \{\vartheta/2 < x_n < 1/2 \}.
$$
Again, by the assumption of the lemma, it follows that $\mathcal{C} \subset \{w^->0 \}$. Notice that
\begin{align*}
    \Delta v & = \frac{1}{\epsilon} \Delta w^-\\
             & = \frac{1}{\epsilon w^-}\left(\Lambda^-_{0}(x)(w^-)^{\beta_\ast(\gamma(x)-\gamma_\ast)} - (\beta_\ast - 1)|Dw^-|^2 \right)\\
             & = \frac{1}{\epsilon (x_n + \epsilon v)}\left(\Lambda^-_{0}(x)(x_n + \epsilon v)^{\beta_\ast(\gamma(x)-\gamma_\ast)} - (\beta_\ast - 1)|e_n + \epsilon Dv|^2 \right).
\end{align*}
In view of \eqref{smallness of oscillating}, we have
$$
    \Lambda_0^-(x) = (\beta_\ast - 1) + O(\epsilon^2).
$$
Also, since $x_n + \epsilon v \geq \vartheta/2$ inside $\mathcal{C}$ for $\epsilon$ small enough, we obtain
$$
    (x_n + \epsilon v)^{\beta_\ast(\gamma(x)-\gamma_\ast)} = 1+ O(\epsilon^2),
$$
and so
$$
    \Lambda^-_{0}(x)(x_n + \epsilon v)^{\beta_\ast(\gamma(x)-\gamma_\ast)} =(\beta_\ast - 1) + O(\epsilon^2).
$$
This allows us to obtain
\[
\begin{aligned}
|\Delta v|
&\leq \frac{C}{\epsilon}\Big(o(\epsilon) + \epsilon |Dv| + \epsilon^2 |Dv|^2 \Big) \\
&\leq C\big(o(1) + |Dv|\big),
\qquad \text{in } \mathcal{C} \cap \{\,|Dv| \lesssim \epsilon_0^{-1}\,\}.
\end{aligned}
\]
Here, we also used that, in $\mathcal{C} \cap \{|Dv| \lesssim \epsilon_0^{-1}\}$, one has
\[
x_n + \epsilon v \geq \tfrac{\vartheta}{2}
\qquad \text{and} \qquad 
\epsilon |Dv|^2 \leq \epsilon \epsilon_0^{-1} |Dv| \leq |Dv|,
\quad \text{for } \epsilon \leq \epsilon_0.
\] 
Up to a scaling factor, we can still apply \cite[Lemma 3.8]{DSS} to get that $v$ is locally Lipschitz in the interior of the cylinder $\mathcal{C}$. Since $v$ is nonnegative and satisfies $v \geq 1$ at $\overline{x} = e_n/5$, we can apply the Harnack inequality, provided $\epsilon_0$ is sufficiently small. Recall that $o(1) \to 0$ as $\epsilon_0 \to 0$. This yields $v \geq c$ in $B_{1/2}' \times \{x_n = \vartheta\}$, which in turn implies that 
\begin{equation}\label{bound from below harnack}
	w^- \geq x_n + \epsilon c, \text{ in }\, B_{1/2}' \times \{x_n = \vartheta\}.
\end{equation}
Now, we extend this inequality beyond $\{x_n = \vartheta\}$ by building suitable barriers. Consider
$$
    B(x) \coloneqq -|x|^2 + Ax_n^2 + x_n, 
$$
with $A>0$ large enough, and define, for $t \in \mathbb{R}$,
$$
    \Phi_t(x) \coloneqq x_n + \epsilon_0 c(B+t).
$$
First, we observe that
$$
	B(x) + t \leq (A-1)x_n^2 + x_n + t \leq Ax_n + t,
$$
and so, if $t$ is negative enough, depending only on $A$, we have
$$
	\Phi_t(x) < x_n \leq w^-,\, \text{ in }\, \mathcal{L} \coloneqq \overline{B}_{1/2}' \times \{-2\epsilon_0 \leq x_n \leq \vartheta\}.
$$
This allows us to slide this barrier $\Phi_t$ until it touches the graph of $w^-$ from below for the first time, that is, we consider $t'$ to be the largest value of $t$ such that
$$
    \Phi_t \leq w^- \quad \text{in} \quad B'_{1/2} \times \{-2\epsilon_0 \leq x_n \leq \vartheta\}.
$$
By maximality, there should be $x'$ such that $\Phi_{t'}(x') = w^-(x')$. We observe that $t' \geq 1/8$ should hold; otherwise, if $t' < 1/8$, we would get $\Phi_{t'} < w^-$ on the boundary of the cylinder $B'_{1/2} \times \{-2\epsilon_0 \leq x_n \leq \vartheta\}$. Indeed, if $x_n = -2\epsilon_0$, then
$$
	\Phi_{t'}(x) \leq -2\epsilon_0 + \epsilon_0 c\left(-4\epsilon_0^2 + 4A\epsilon_0^2 - 2\epsilon_0 + 1/8\right) < 0 = w^-,
$$
for $\epsilon_0$ small enough. Recall that $w^-=0$ on $x_n = -2\epsilon_0$ by the flatness assumption. If $x_n = \vartheta$, then
$$
	\Phi_{t'}(x) \leq \vartheta + \epsilon_0 c\left(-\vartheta^2 + A\vartheta^2 - \vartheta + 1/8\right) < \vartheta + \epsilon_0 c,
$$
for $\vartheta$ small enough. Therefore, $\Phi_{t'} < x_n + \epsilon_0 c \leq w^-$ on $B_{1/2}' \times \{x_n = \vartheta\}$ by taking \eqref{bound from below harnack} into account. Moreover, on $\partial B_{1/2}' \times \{-2\epsilon_0 \leq x_n \leq \vartheta\}$, we have
$$
	\Phi_{t'}(x) = x_n + \epsilon_0 c\left(-1/4 - x_n^2 + Ax_n^2 + x_n + 1/8\right) < x_n,
$$
provided $\epsilon_0$ and $\vartheta$ are small enough. As a consequence, we get that the touching occurs in the interior of the cylinder $\mathcal{L}$ and has to be either in the positivity set of $\Phi_{t'}$ or in its free boundary. Assume, seeking a contradiction, that $x' \in F(w^-)$. Then, there should hold
$$
    |D\Phi_{t'}(x')| \leq \mathcal{D}^\ast(x').
$$
On the other hand, by direct computations, we have
\begin{align}
\nonumber |D\Phi_{t'}(x')|^2
  &= \bigl| e_n + \epsilon c\, DB(x') \bigr|^2 \\ \nonumber
  &= 1 + 2\epsilon c\, \partial_{e_n} B(x') + \epsilon^2 c^2 |D B(x')|^2 \\
  &> 1 + 2\varepsilon c\,\bigl(2(A-1)x_n + 1\bigr), \label{eq:inequality}
\end{align}
which implies
$$
    |D\Phi_{t'}(x')| > 1 + c_1\sqrt{\epsilon},
$$
for $\epsilon_0$ small enough. However, by \ref{smallness of oscillating}, it follows that
$$
   \mathcal{D}^\ast(x') \leq 1 + O(\epsilon^2), 
$$
which is a contradiction for $\epsilon$ small enough. It also cannot occur in the interior because $\Phi_{t'}$ is a strict subsolution. Indeed, observe that $\Delta \Phi_{t'} = \epsilon c \Delta B > 0$, and as a consequence of \eqref{eq:inequality}, we have
\begin{align*}
   \Lambda^-_{z_0}(x)\left(\Phi_{t'}(x')\right)^{\beta_\ast(\gamma(x)-\gamma_\ast)} - (\beta_\ast - 1)|D\Phi_{t'}(x')|^2 & <   \Lambda^-_{z_0}(x) - (\beta_\ast - 1) - \epsilon c_2\\
   & \leq  0,
\end{align*}
where we used that $\gamma(x)-\gamma_\ast \geq 0$, and the parameters $\epsilon_0$ and $\mu$ are small enough so that 
$$
\Lambda^-_{z_0}(x) - (\beta_\ast - 1) < \frac{1}{2}\epsilon c_2.
$$
Therefore, $\Phi_{t'}$ is a strict subsolution at $x'$, which is a contradiction.

Now we use the bound from below $t' \geq 1/8$ to obtain
$$
    w^- \geq \Phi_{t'} \geq x_n + \epsilon_0 c(B + 1/8),
$$
in the cylinder $B_{1/2}' \times \{-2\epsilon_0 \leq x_n \leq \vartheta\}$. In this set, we have $B \geq -1/16$, and so 
$$
    w^- \geq \Phi_{t'} \geq x_n + \epsilon_0 c_3,
$$
in the same cylinder, and the proof of the lemma follows. 

\medskip
For the second part, our goal is to improve oscillation from above. To that end, we define instead
$$
    v \coloneqq \frac{x_n - w^+}{\epsilon}.
$$
By the flatness assumption, it satisfies $0 \leq v \leq 2$. Consider the cylinder $\mathcal{C}$ as before. If $\epsilon_0 \leq \vartheta/8$, then the flatness assumption gives us $\mathcal{C} \subset \{ w^+>0\}$. As before, $v$ solves
$$
    \Delta v = \frac{1}{\epsilon (x_n - \epsilon v)}\left(\Lambda^+_{0}(x)(x_n - \epsilon v)^{\beta^\ast(\gamma(x)-\gamma^\ast)} - (\beta^\ast - 1)|e_n - \epsilon Dv|^2 \right).
$$
Using that $x_n - \epsilon v \geq \vartheta/4$ in $\mathcal{C}$, we get
$$
    |\Delta v| \leq C(o(1) + |Dv|), \quad \text{in} \quad \mathcal{C} \cap \{|Dv| \lesssim \epsilon_0^{-1}\}.
$$
As before, we combine the Harnack inequality and $v \geq 1$ at $\overline{x} = e_n/5$ to obtain $v \geq c$ in $B_{1/2}' \times \{x_n = \vartheta \}$, which then implies
\begin{equation}\label{bound from above harnack}
    w^+ \leq x_n - \epsilon c, \quad \text{in} \quad B_{1/2}' \times \{x_n = \vartheta \}.
\end{equation}
To extend this inequality beyond $x_n = \vartheta$, we consider the following barrier
$$
    B(x) \coloneqq |x|^2 - Ax_n^2 - x_n,
$$
with $A>0$ and define for $t>0$
$$
    \Phi_t(x) \coloneqq x_n + \epsilon_0 c(B(x) + t).
$$
First notice that if $A > n$, then
$$
    \Delta \Phi_t = \epsilon_0 c \Delta B < 0.
$$
Moreover, by the very same computations
$$
    |D\Phi_t|^2  < 1 + O(\epsilon),
$$
and so
$$
    \Lambda^+_{z_0}(x)(\Phi_t)^{\beta^\ast(\gamma(x)-\gamma^\ast)} - (\beta^\ast - 1)|D\Phi_t|^2 \geq 0,
$$
so that $\Phi_t$ is a strict supersolution for any $t>0$. It is used here that $\gamma(x) - \gamma^\ast \leq 0$. The rest of the proof follows as before, by sliding $\Phi_t$ from above until it touches $w^+$ for the first time and estimating the $t'$ from above.
\end{proof}

It is fairly standard to verify that Lemma \ref{almost equicontinuity} yields equicontinuity for the family of renormalized solutions; see, for instance, \cite{DSS}.

\begin{proposition}\label{mco airport}
Let $w$ be a viscosity solution to \eqref{eq almost linearized} with $0 \in F(w)$. There exists $\epsilon_0$ such that if $\epsilon \leq \epsilon_0$ and there holds
$$
    \sup\left([\gamma]_{C^{0,\mu}(0)}, [\delta]_{C^{0,\mu}(0)}\right) \leq \epsilon^2,
$$ 
and
$$
    (x \cdot \nu - \epsilon)_+ \leq w^- \leq w \leq w^+ \leq (x \cdot \nu + \epsilon)_+ \quad \text{in} \quad B_1,
$$
for some $\nu \in \partial B_1$, then $F(w)$ is $C^{1,\delta}$ at $0$, for some universal parameter $\delta>0$.
\end{proposition}

\begin{proof}
After a rotation, we can apply Proposition \ref{improvement of flatness} to obtain $\nu_1 \in \partial B_1$ satisfying $|\nu_1 - \nu| \leq C\epsilon$ and
$$
    \left(\nu_1 \cdot x - \frac{\epsilon}{2}r\right)_+ \leq w^- \leq w \leq w^+ \leq \left(\nu_1 \cdot x + \frac{\epsilon}{2}r\right)_+ \quad \text{in }\, B_r.
$$ 
Defining $w_r(x) \coloneqq r^{-1}w(rx)$, this can be written as
$$
    \left(\nu_1 \cdot x - \frac{\epsilon}{2}\right)_+ \leq (w^-)_r \leq w_r \leq (w^+)_r \leq \left(\nu_1 \cdot x + \frac{\epsilon}{2}\right)_+ \quad \text{in }\, B_1,
$$
where
$$
   (w^-)_r(x) \coloneqq r^{-1}w^-(rx) \quad \text{and} \quad  (w^+)_r(x) \coloneqq r^{-1}w^+(rx).
$$
Moreover, since
$$
  \gamma_\ast(0,1) \leq \gamma_\ast(0,r) \leq \gamma(0) \leq \gamma^\ast(0,r) \leq \gamma^\ast(0,1),  
$$
it follows that
$$
    u^{\frac{1}{\beta_\ast(0,1)}} \leq u^{\frac{1}{\beta_\ast(0,r)}} \leq u^{\frac{1}{\beta(0)}} \leq u^{\frac{1}{\beta^\ast(0,r)}} \leq u^{\frac{1}{\beta^\ast(0,1)}}.
$$
This allows us to further squeeze the flatness inequality by
$$
    \left(\nu_1 \cdot x - \frac{\epsilon}{2}\right)_+ \leq w^-_r \leq w_r \leq w^+_r \leq \left(\nu_1 \cdot x + \frac{\epsilon}{2}\right)_+ \quad \text{in }\, B_1,
$$
where
$$w^-_r(x) \coloneqq \varrho(0)^{-\frac{1}{\beta(0)}} r^{-1}u(rx)^{\frac{1}{\beta_\ast(0,r)}}$$ 
and
$$w^+_r(x) \coloneqq \varrho(0)^{-\frac{1}{\beta(0)}} r^{-1}u(rx)^{\frac{1}{\beta^\ast(0,r)}}. $$
We can then apply again Proposition \ref{improvement of flatness}, this time to $w_r$, to obtain $\nu_2 \in \partial B_1$ such that $|\nu_2 - \nu_1| \leq C2^{-1}\epsilon$, and 
$$
    \left(\nu_2 \cdot x - \frac{\epsilon}{4}r\right)_+ \leq w^-_r \leq w_r \leq w^+_r \leq \left(\nu_2 \cdot x + \frac{\epsilon}{4}r\right)_+ \quad \text{in } \quad B_r.
$$
An iteration of this argument gives a sequence $(\nu_k)_{k \in \mathbb{N}} \subset \partial B_1$ satisfying $|\nu_k - \nu_{k-1}| \leq C 2^{-k}\epsilon$, such that
$$
    \left(\nu_k \cdot x - \frac{\epsilon}{2^k}r\right)_+ \leq w^-_{r^{k-1}} \leq w_{r^{k-1}} \leq w^+_{r^{k-1}} \leq \left(\nu_k \cdot x + \frac{\epsilon}{2^k}r\right)_+ \quad \text{in} \quad B_{r^{k-1}}.
$$
Scaling back, we obtain
$$
    \left(\nu_k\cdot x - \frac{\epsilon}{2^k}r^k\right)_+ \leq w \leq \left(\nu_k\cdot x + \frac{\epsilon}{2^k} r^k\right)_+ \quad \text{in} \quad B_{r^k},
$$
for every $k \in \mathbb{N}$. It then follows that $F(w)$ is $C^{1,\delta}$ at $0$.
\end{proof}

We conclude this section with the regularity result for the free boundary. We say a function belongs to $W^{1,n^{+}}$ if it belongs to $W^{1,q}$, for some $q>n$.

\begin{theorem}\label{free boundary regularity theorem}
Let $u$ be a local minimizer of \eqref{fifi} with assumptions \eqref{H1} and \eqref{bound delta} in force, and assume 
$$
    \gamma(x), \delta(x)  \in W^{1,n^{+}}.
$$
Then, the free boundary $F(u)$ is locally a $C^{1,\delta}$ surface, up to a negligible singular set of Hausdorff dimension less than or equal to $n-2$.
\end{theorem}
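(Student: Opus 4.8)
The plan is to assemble the tools of Sections~\ref{prelim-sect}--\ref{sct monotonicity} with the linearization and improvement-of-flatness scheme discussed above. First I would record that $\gamma \in W^{1,q}$ with $q>n$ puts us, by Morrey's embedding, in the H\"older class $C^{0,\mu}$ with $\mu = 1-n/q$; in particular the Dini-type condition~\eqref{CMC} holds at every point, and (together with the analogous hypothesis on $\delta$ and the standing assumption~\eqref{bound delta}) the integrability conditions~\eqref{integra delta}--\eqref{integra gamma} hold. Consequently the sharp growth estimate (Theorem~\ref{thm-beicodebode}), sharp non-degeneracy (Theorem~\ref{sharp nondeg}), positive density (Theorem~\ref{pos_density}), the Weiss-type monotonicity formula (Theorem~\ref{monotonicity formula}) and the blow-up classification (Corollary~\ref{classification of blow ups}) are all available.

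Next I would split $F(u)\cap B_{1/2}$ into the \emph{regular set} $\reg(F(u))$, consisting of the points $z_0$ at which some blow-up is a half-plane solution $\tfrac{\beta(z_0)}{\sqrt2}u_0^{1/\beta(z_0)} = \delta(z_0)(x\cdot\nu)_+$, and its complement $\sing(F(u))$. To treat the regular set, fix such a $z_0$, perform the distortion $w = c_0^{-1/\alpha}u^{1/\alpha}$ with $\alpha = \alpha(z_0)$, so $w$ solves~\eqref{eq almost linearized}; by Theorem~\ref{thm-beicodebode} the Lipschitz rescalings $w_r$ of~\eqref{lip rescaling of linearized eq} are uniformly bounded and converge, along $r\to0$, to a viscosity solution $\overline w$ of the constant-exponent equation~\eqref{linearized eq DSS} with $0\in F^{\vis}(\overline w)$, the free boundary condition of Definition~\ref{sub-super solution property} being read off from the one-dimensional asymptotics $\overline u(x_0+t\nu)\approx c_0 t^{\alpha}$ of the constant-exponent minimizer. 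Since the blow-up is a half-plane solution, flatness at unit scale for $\overline w$ transfers back to flatness for $w$ at a small scale, and iterating Lemma~\ref{mco airport} yields normals $\nu_k\in\partial B_1$ with $|\nu_k-\nu_{k-1}|\le C2^{-k}\epsilon$ and $(x\cdot\nu_k-\eta^k\epsilon r)_+\le w\le(x\cdot\nu_k+\eta^k\epsilon r)_+$ in $B_{\eta^k r}$; hence $F(w)=F(u)$ is $C^{1,\delta}$ near $z_0$. The point to verify is that the constants in Lemmas~\ref{osc improv SIAM}--\ref{mco airport} do not deteriorate as $z_0$ varies: this holds because the limiting operators $\overline h(z_0,\xi)=(\alpha(z_0)-1)(1-|\xi|^2)$ have $C^1$ norm near $\partial B_1$ controlled solely by the range $[\gamma_\star(0,1),\gamma^\star(0,1)]$, and the convergence $h_r\to\overline h$ is uniform in $z_0$ thanks to the H\"older modulus of $\gamma$.

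With $C^{1,\delta}$ regularity at regular points, and using Theorem~\ref{pos_density} to rule out $F(u)$ having interior, $\reg(F(u))$ is relatively open, so it remains to bound $\sing(F(u))$. Here I would run Federer's dimension reduction~\cite{F}: by the monotonicity formula and Corollary~\ref{classification of blow ups}, every blow-up at $z_0$ is a $\beta(z_0)$-homogeneous global minimizer of the constant-exponent Alt--Phillips functional with parameters $\gamma(z_0),\delta(z_0)$; such homogeneous minimizers are classified in dimension $n=2$ as half-plane solutions, by \cite[Theorem~8.2]{AP}; the singular sets converge along the rescalings $\{u_r\}$ because $\reg$ is open and by the sharp non-degeneracy of Theorem~\ref{nondeg}; and global homogeneous minimizers possess a translation invariance along their singular rays, as in~\cite{DSS2}. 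Federer's machine then gives $\mathcal{H}^{n-2+s}(\sing(F(u)))=0$ for every $s>0$, i.e. $\dim_{\mathcal H}\sing(F(u))\le n-2$, which combined with the previous step proves the claimed $C^{1,\delta}$ regularity of $F(u)$ off a set of Hausdorff dimension at most $n-2$.

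The main obstacle is the genuine point-dependence of the tangent problem: the homogeneity $\beta(z_0)$ \emph{and} the linearized operator $\overline h(z_0,\cdot)$ both vary with $\gamma(z_0)$, so one must run the blow-up with a point-dependent exponent and still extract compactness, and --- more delicately --- obtain improvement of flatness with constants uniform over the whole free boundary. This uniformity is exactly where the $W^{1,n^+}$ assumption (strictly stronger than~\eqref{CMC}) enters: H\"older continuity of $\gamma$ forces $h_r\to\overline h$ uniformly in $z_0$, which is what permits a fixed pair $(\epsilon_0,\eta)$ in the iteration. A secondary technical point, indicated above, is checking that the variationally produced limit $\overline w$ satisfies the viscosity free boundary condition of Definition~\ref{sub-super solution property}, which reduces to the one-dimensional Alt--Phillips analysis.
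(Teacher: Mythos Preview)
Your proposal is correct and follows essentially the same route as the paper's own proof: you decompose $F(u)$ into regular and singular parts, handle the regular part via the distortion $w=c_0^{-1/\alpha}u^{1/\alpha}$ and the iterated improvement-of-flatness of Lemma~\ref{mco airport} (with uniformity in $z_0$ coming from the H\"older modulus of $\gamma$), and bound the singular part by Federer's reduction using Corollary~\ref{classification of blow ups} and the two-dimensional classification from \cite{AP}. Your additional remarks on Morrey's embedding securing~\eqref{CMC} and \eqref{integra delta}--\eqref{integra gamma}, and on the viscosity free boundary condition for $\overline w$ being read off from the one-dimensional Alt--Phillips asymptotics, are exactly the preliminary reductions the paper records in the discussion preceding the theorem.
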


\begin{proof}
With all the ingredients from the preceding discussion available, the proof is standard, and we only highlight the main steps. 
    
We start by decomposing the free boundary as the disjoint union of its regular points and its singular points, that is,
$$
    F(u) = \reg(u) \cup \sing(u).
$$
The set $\reg(u)$ stands for the points where blow-ups can be classified. More precisely, $z_0 \in \reg(u)$, if for a sequence of radii $r_n$ converging to zero and a unitary vector $\nu$, there holds 
$$
     u_{r_n}(x) \coloneqq \frac{u(z_0 + r_nx)}{r_n^{\frac{2}{2-\gamma(z_0)}}} \longrightarrow \varrho(z_0) (x\cdot \nu)_{+}^{\frac{2}{2-\gamma(z_0)}}.
$$
The set $\sing(u)$ is simply the complement of $\reg(u)$, that is 
$$
    \sing(u) \coloneqq F(u) \setminus \reg(u).
$$
A successful application of the dimension reduction argument ensures that
$$
    \mathcal{H}^{n-2+s}(\sing(u)) = 0,
$$
for all $s>0$. Thus, one can estimate the Hausdorff dimension of the singular set as
$$
    \dim_\mathcal{H}\left(\sing(u)\right) \coloneqq \inf\{d: \mathcal{H}^d(\sing(u)) = 0  \} \leq n-2+s,
$$
for every $s>0$, and so
$$
    \dim_\mathcal{H}\left(\sing(u)\right) \leq n-2.
$$
In particular, we conclude that $\sing(u)$ is a negligible set with respect to the Hausdorff measure $\mathcal{H}^{n-1}$, \textit{i.e.}, 
$$
    \mathcal{H}^{n-1} \left (F(u) \setminus \reg(u) \right ) = 0.
$$
This, in particular, allows us to conclude that the portion of the free boundary to which Proposition \ref{mco airport} can be applied has total measure. 

Let us explain the dimension reduction argument, based on ideas from \cite{DSS2}. We emphasize that this part comes almost for free, after establishing the Weiss monotonicity formula and the classification of blow-ups as homogeneous functions. The reason behind this is that the minimal cones are minimizers of the Alt--Phillips functionals with constant $\delta$ and $\gamma$, for which the classical theory applies.

Fix $z_0 \in F(u)\cap B_{1/2}$ and consider a blow-up of $u$ at $z_0$, namely
\[
    u_r(x) \coloneqq r^{-\beta(z_0)}\,u(z_0+rx) \;\longrightarrow\; U_{z_0}
    \quad\text{in } C^{\frac{2}{\,2-\gamma(z_0)\,}}_{\mathrm{loc}}(\mathbb{R}^n),
\]
along a sequence $r_k \downarrow 0$ (see Remark~\ref{remark de convergencia}). 
By Corollary~\ref{classification of blow ups}, $U_{z_0}$ is a $\beta(z_0)$-minimal cone, \textit{i.e.}, a $\beta(z_0)$-homogeneous global minimizer of the Alt--Phillips functional with 
$\delta \equiv \delta(z_0)$ and $\gamma \equiv \gamma(z_0)$ (cf. Definition~\ref{def: min cone}).

Let $Z_0 \in F(U_{z_0})$. By homogeneity and a rotation/translation, we may assume $Z_0=e_1$ (the first coordinate vector). Let $\overline{U}_{Z_0}$ denote a blow-up of $U_{z_0}$ at $Z_0$. Then $\overline{U}_{Z_0}$ is constant in the $e_1$-direction (by the homogeneity of $U_{z_0}$; see \cite[Lemma~5.4]{DSS0}) and is again a global minimizer of the same Alt--Phillips functional. Since $\overline{U}_{Z_0}$ is independent of the first variable, it can be viewed as a minimizer in $\mathbb{R}^{n-1}$ (see \cite[Proposition~3.13]{DSS0} and \cite[Proposition~5.3]{DSS2}). If $Z_0$ is a singular point of $U_{z_0}$, then $\overline{U}_{Z_0}$ is a nontrivial cone.

Iterating this dimension-reduction argument, we eventually reach dimension $2$, where minimizing cones are classified and hence no singular points occur (see \cite[Theorem~8.2]{AP}). In particular, this yields the existence of a $\beta(z_0)$-minimal cone in $\mathbb{R}^3$ that is regular at every point except the origin. We also point out that an energy gap also follows from the Weiss monotonicity formula.

The main part of the argument consists of proving the following claim: assume that, for some $s>0$, one has $\mathcal{H}^s(\sing(U))=0$, for every $\beta(z_0)$-minimal cone $U$ in $\mathbb{R}^n$; then $\mathcal{H}^s(\sing(u))=0$, for every minimizer of \eqref{fifi} in $B_1$. Indeed, let $u$ be a minimizer of \eqref{fifi} in $B_1$ and fix $z\in\sing(u)$. There exists $d_z>0$ such that, for every $\eta \in(0,d_z]$ and every $S\subset \sing(u)\cap B_\eta(z)$, one can cover $S$ by finitely many balls $B_{r_i}(z_i)$, with $z_i\in S$ and
\begin{equation}\label{cover sum}
\sum_i r_i^s \le 2^{-1}\eta^s.
\end{equation}
If not, take $\eta_k\downarrow 0$ for which \eqref{cover sum} fails and consider the blow-ups
$$
    u_k(x)\coloneqq \eta_k^{-\beta(z)}\,u(z+\eta_k x).
$$
Up to a subsequence, $u_k \to U_z$ locally, where $U_z$ is a $\beta(z)$-minimal cone. 
By the hypothesis, $\mathcal{H}^s(\sing(U_z))=0$, so, locally, $\sing(U_z)$ admits a covering 
$\{B_{r_i/4}(z_i)\}$, with $\sum_i r_i^s \le \tfrac12$. 
By Proposition~\ref{mco airport} (flatness implies regularity result), this yields a local covering of 
$\sing(u_k)$ by $\{B_{r_i/2}(z_i)\}$; scaling back gives \eqref{cover sum} for $u$, a contradiction. The remaining measure-theoretic step is identical to \cite[Proof of Lemma~5.7]{DSS0}, which concludes $\mathcal{H}^s(\sing(u))=0$. With the previous claim in hand, the final step in the dimension–reduction scheme is the corresponding statement for minimal cones: assume that for some $s>0$,
$\mathcal{H}^s(\sing(U))=0$ for every minimal cone $U$; then,
\[
\begin{aligned}
&\forall\, U \text{ minimal cone in } \mathbb{R}^n\colon \quad \mathcal{H}^s(\sing( U))=0 \\
&\qquad \Longrightarrow\quad 
\forall\, V \text{ minimal cone in } \mathbb{R}^{n+1}\colon \quad \mathcal{H}^{s+1}(\sing(V))=0 .
\end{aligned}
\] 
The proof follows an analogous reasoning. Since minimal cones are classified in dimension $2$, we know that for all $s>0$,
\[
\begin{aligned}
&\forall\, U \text{ min. cone in } \mathbb{R}^2\colon \quad \mathcal{H}^s(\sing( U))=0 \\
&\qquad \Longrightarrow\quad 
\forall\, V \text{ min. cone in } \mathbb{R}^{3}\colon \quad \mathcal{H}^{s+1}(\sing(V))=0\\
& \quad \qquad \cdots \\
& \qquad \qquad \Longrightarrow\quad \forall\, W \text{ min. cone in } \mathbb{R}^n\colon \quad \mathcal{H}^{s+n-2}(\sing(W))=0.
\end{aligned}
\]
Thus,
\[
\mathcal{H}^{s+n-2}(\sing(u))=0, \quad \text{for all } s>0.
\]

Now, we show that $\reg(u)$ is locally $C^{1,\delta}$, for some $\delta>0$ universal. Consider $z_0 \in \reg(u)$ and let $u_0$ be a blow-up limit of $u$ at $z_0$. In other words, for a sequence $r=o(1)$, and  up to a change of coordinates, there holds
$$
    u_r(x) \coloneqq \frac{u(z_0 + rx)}{r^{\frac{2}{2-\gamma(z_0)}}} \longrightarrow \varrho(z_0) (x_n)_+^{\frac{2}{2-\gamma(z_0)}},
$$
in the  $C_{loc}^{1,\frac{\gamma(z_0)}{2-\gamma(z_0)}}(\mathbb{R}^n)-$topology. Defining further
$$
	u^-_r(x) \coloneqq \frac{u(z_0 + rx)}{r^{\frac{2}{2-\gamma_\ast(z_0,r)}}} \quad \text{and} \quad u^+_r(x) \coloneqq \frac{u(z_0 + rx)}{r^{\frac{2}{2-\gamma^\ast(z_0,r)}}},
$$
we see that they also converge to $\varrho(z_0) (x_n)_+^{\frac{2}{2-\gamma(z_0)}}$, which follows from the fact that
$$
	\lim_{r \to 0} r^{\frac{2-\gamma^\ast(z_0,r)}{2-\gamma_\ast(z_0,r)}} = \lim_{r \to 0} r^{\frac{2-\gamma_\ast(z_0,r)}{2-\gamma^\ast(z_0,r)}} = 1.
$$
Recalling the notation at the beginning of this Section, we have
$$
	w^-_r(x) \coloneqq \varrho(z_0)^{\frac{-1}{\beta(z_0)}}\frac{u(z_0 + rx)^{\frac{1}{\beta_\ast(z_0,r)}}}{r} \longrightarrow (x_n)_+,
$$
and
$$
	w^+_r(x) \coloneqq \varrho(z_0)^{\frac{-1}{\beta(z_0)}}\frac{u(z_0 + rx)^{\frac{1}{\beta^\ast(z_0,r)}}}{r} \longrightarrow (x_n)_+.
$$
Therefore, if we define
$$
	w(x) \coloneqq \varrho(z_0)^{\frac{-1}{\beta(z_0)}}\frac{u(z_0 + rx)^{\frac{1}{\beta(z_0)}}}{r},
$$
we obtain $w^-_r \leq w \leq w^+_r$ in $B_1$. From here, it is then standard to derive the assumptions of Proposition \ref{improvement of flatness} for $r$ small enough, and we conclude $F(u)$ is $C^{1,\delta}$ at $z_0$.
\end{proof}

\bigskip

{\small \noindent{\bf Acknowledgments.} We are deeply grateful to the referee for an exceptionally careful and insightful report. The comments and suggestions were of great help and substantially improved the manuscript. This publication is based upon work supported by King Abdullah University of Science and Technology (KAUST) under Award No. ORFS-CRG12-2024-6430. It was financed in part by the Coordenação de Aperfeiçoamento de Pessoal de Nível Superior - Brasil (CAPES) - Finance Code 001. DJA is supported by CNPq grant 427070/2016-3 and grant 2019/0014 from Para\'\i ba State Research Foundation (FAPESQ). JMU is partially supported by UID/00324 - Centre for Mathematics of the University of Coimbra.}

\bigskip

\bibliographystyle{amsplain, amsalpha}

\end{document}